\documentclass[11pt,a4paper,reqno]{amsart}

\pdfoutput=1


\usepackage[utf8]{inputenc}
\usepackage[british]{babel}
\usepackage{amsmath,amssymb,amsthm}
\usepackage{thmtools,thm-restate}
\usepackage{hyperref,float,caption}
\usepackage[nameinlink]{cleveref}
\usepackage{geometry}
\usepackage{enumitem}
\usepackage[presets={vec-cev}]{letterswitharrows}
\usepackage[abbrev, msc-links, nobysame]{amsrefs}
\usepackage{tikz}
\usepackage{xcolor}
\usetikzlibrary{arrows}
\tikzset{
e+ /.tip = {[sep=0pt]|[sep=3pt]_},
e+4 /.tip = {[sep=0pt]|[sep=5pt]_},
e+5 /.tip = {[sep=0pt]|[sep=5pt]_},
e+6 /.tip = {[sep=0pt]|[sep=6pt]_},
e+7 /.tip = {[sep=0pt]|[sep=7pt]_},
e+2 /.tip = {[sep=0pt]|[sep=2pt]_}
}
\tikzset{arc/.style = {->,> = stealth'}}
\tikzset{
dot/.style = {circle, draw = black, fill, minimum size=#1,
              inner sep=0pt, outer sep=0pt},
dot/.default = 3.8pt
}
\usetikzlibrary{angles,quotes}
\geometry{left=27mm ,right=27mm, top=25mm, bottom=25mm, marginparwidth=25mm}
\linespread{1.19}

\hyphenation{bi-di-rec-ted}


\newcommand{\x}[1]{\vecev{#1}}

\def\N{\mathbb N}

\def\P{\mathcal P}

\def\P{\mathcal P}
\def\X{\mathcal X}
\def\Y{\mathcal Y}

\def\halfedges{\textbf{E}}

\newcommand{\Bon}{\mathrm{Bon}}

\newcommand{\vehat}{\vec{\hat{e}}}

\newcommand{\set}[1]{\left\lbrace #1 \right\rbrace}
\newcommand{\inv}[1]{{#1}^{-}}


\newtheorem{theorem}{Theorem}[section]
\newtheorem{lemma}[theorem]{Lemma}
\newtheorem{corollary}[theorem]{Corollary}
\newtheorem{proposition}[theorem]{Proposition}

\theoremstyle{definition}
\newtheorem{definition}[theorem]{Definition}

\newtheorem{remark}[theorem]{Remark}
\newtheorem*{theorem*}{Theorem}


\crefname{enumi}{}{}
\crefname{enumii}{}{}
\crefformat{enumi}{#2#1#3}
\crefformat{enumii}{#2#1#3}
\Crefformat{enumi}{#2#1#3}
\Crefformat{enumii}{#2#1#3}

\crefname{definition}{definition}{definitions}
\crefformat{definition}{#2Definition~#1#3}
\Crefformat{definition}{#2Definition~#1#3}

\crefname{section}{section}{sections}
\crefformat{section}{#2Section~#1#3}
\Crefformat{section}{#2Section~#1#3}

\crefname{subsection}{Subsection}{subsections}
\crefformat{subsection}{#2Subsection~#1#3}
\Crefformat{subsection}{#2Subsection~#1#3}

\crefname{lemma}{lemma}{lemmata}
\crefformat{lemma}{#2Lemma~#1#3}
\Crefformat{lemma}{#2Lemma~#1#3}

\crefname{remark}{remark}{remarks}
\crefformat{remark}{#2Remark~#1#3}
\Crefformat{remark}{#2Remark~#1#3}

\crefname{theorem}{theorem}{theorems}
\crefformat{theorem}{#2Theorem~#1#3}
\Crefformat{theorem}{#2Theorem~#1#3}

\crefname{corollary}{corollary}{corollaries}
\crefformat{corollary}{#2Corollary~#1#3}
\Crefformat{corollary}{#2Corollary~#1#3}

\crefname{figure}{figure}{figures}
\crefformat{figure}{#2Figure~#1#3}
\Crefformat{figure}{#2Figure~#1#3}

\crefname{proposition}{proposition}{propositions}
\crefformat{proposition}{#2Proposition~#1#3}
\Crefformat{proposition}{#2Proposition~#1#3}

\crefname{observation}{observation}{observations}
\crefformat{observation}{#2Observation~#1#3}
\Crefformat{observation}{#2Observation~#1#3}


\title{Menger's Theorem in bidirected graphs}
\keywords{Menger's Theorem, bidirected graph, connectivity}
\subjclass[2020]{05C40 (Primary) 05C38, 05C20 (Secondary)}

\author[N.\,Bowler]{Nathan Bowler}
\author[E.\,Ghorbani]{Ebrahim Ghorbani}
\address{Ebrahim Ghorbani:
Department of Mathematics, K.N. Toosi University of Technology, P. O. Box 16765-3381, Tehran, Iran and Universit{\"a}t Hamburg, Department of Mathematics, Bundesstra{\ss}e~55 (Geomatikum), 20146~Hamburg, Germany}
\email{ghorbani@kntu.ac.ir, ebrahim.ghorbani@uni-hamburg.de}
\author[F.\,Gut]{Florian Gut}
\author[R.\,W.\,Jacobs]{Raphael W. Jacobs}
\author[F.\,Reich]{Florian Reich}
\address{Nathan Bowler, Florian Gut, Raphael W. Jacobs, Florian Reich: Universit{\"a}t Hamburg, Department of Mathematics, Bundesstra{\ss}e~55 (Geomatikum), 20146~Hamburg, Germany}
\email{\{nathan.bowler,  florian.gut, raphael.jacobs, florian.reich\}@uni-hamburg.de}

\begin{document}

\begin{abstract}
    Bidirected graphs are a generalisation of directed graphs that arises in the study of undirected graphs with perfect matchings.
    Menger's famous theorem -- the minimum size of a set separating two vertex sets $X$ and $Y$ is the same as the maximum number of disjoint paths connecting them -- is generally not true in bidirected graphs.
    We introduce a sufficient condition for~$X$ and~$Y$ which yields a version of Menger's Theorem in bidirected graphs that in particular implies its directed counterpart.
\end{abstract}

\maketitle

\section{Introduction}

A fundamental result in the field of graph theory is due to Karl Menger~\cite{Menger1927} which is nowadays just known as Menger's Theorem.
It gives an important insight into the connectivity of two sets of vertices in directed graphs.
In its common form, Menger's Theorem asserts that the minimum number of vertices separating two sets of vertices $X$ and $Y$ in a directed graph $D$ is the same as the maximum number of disjoint $X$--$Y$ paths in $D$, see e.g.\ \cite{bang2008}*{Theorem~7.3.1}.

A slight structural strengthening of Menger's Theorem was proven by B{\"o}hme, G{\"o}ring and Harant~\cite{Boehme01}:
For a set $\P$ of $k$ disjoint $X$--$Y$ paths, there is either a set of $k$ vertices separating $X$ from $Y$, one vertex on each path in~$\P$, or a set of $k+1$ disjoint $X$--$Y$ paths where $k$ of them use the same startvertices in $X$ as the paths of $\P$:

\begin{theorem}[\cite{Boehme01}*{Theorem~2}] \label{theo:DirectedMenger}
    Let~$X$ and~$Y$ be sets of vertices of a directed graph~$D$, and let~$P_1, \dots, P_k$ be vertex-disjoint $X$--$Y$~paths in $D$ where~$P_i$ starts in~$v_i \in X$ for~$i \in [k]$.
    Then precisely one of the following is true:
    \begin{enumerate}[label=(\arabic*)]
        \item There is a set $S$ of $k$ vertices of $D$ such that $D - S$ contains no $X$--$Y$~path.
        \item There are~$k + 1$ vertex-disjoint $X$--$Y$~paths $P_1', \dots, P_{k+1}'$ in~$D$ where~$P_i'$ starts in~$v_i$ for~$i \in [k]$.
    \end{enumerate}
\end{theorem}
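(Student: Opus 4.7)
The plan is to prove the theorem by the standard augmenting-path argument used to establish classical Menger's Theorem, but applied in a way that keeps track of which vertex of $X$ serves as the start of each path.

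\emph{Setup.} I would use the familiar vertex-splitting construction to turn the problem into a unit-capacity flow problem. Replace each vertex $v$ of $D$ by two vertices $v^-, v^+$ joined by an arc $v^-\to v^+$ of capacity one, turn each arc $uv$ of $D$ into an uncapacitated arc $u^+\to v^-$, add a super-source $s$ with unit arcs $s\to x^-$ for each $x\in X$, and add a super-sink $t$ with unit arcs $y^+\to t$ for each $y\in Y$. Vertex-disjoint $X$--$Y$ paths in $D$ then correspond, via integral path decompositions, to $s$--$t$ flows in the auxiliary network. The paths $P_1,\dots,P_k$ give an integral flow $f_0$ of value $k$ with the crucial property that $s\to v_i^-$ is saturated for every $i\in[k]$ while $s\to x^-$ is unused for every $x\in X\setminus\{v_1,\dots,v_k\}$.

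\emph{The dichotomy.} I would attempt to augment $f_0$ by a single augmenting $s$--$t$ path in the residual network. If no such path exists, then max-flow min-cut provides an $s$--$t$ cut of capacity $k$ whose saturated unit-capacity arcs correspond to a set $S$ of $k$ vertices of $D$ meeting every $X$--$Y$ path, yielding case~(1). Otherwise an augmenting path $R$ exists: since the arcs $s\to v_i^-$ are already saturated in $f_0$, the path $R$ must start with some arc $s\to x^-$ with $x\in X\setminus\{v_1,\dots,v_k\}$, and, being simple, $R$ cannot return to $s$ along any residual reverse arc $v_i^-\to s$. Consequently the augmentation produces a flow $f_1$ of value $k+1$ in which $s\to v_1^-,\dots,s\to v_k^-$ and $s\to x^-$ are all saturated, and an integral path decomposition of $f_1$ yields $k+1$ vertex-disjoint $X$--$Y$ paths $P_1',\dots,P_{k+1}'$ in $D$, one for each saturated source arc, with $P_i'$ starting at $v_i$ for $i\in[k]$. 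This gives case~(2); mutual exclusivity of the two cases is clear because $k+1$ vertex-disjoint paths cannot be blocked by fewer than $k+1$ vertices.

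\emph{Main obstacle.} The delicate point, and the only place where the argument goes beyond the textbook proof of Menger's Theorem, is ensuring that augmenting preserves saturation of the arcs $s\to v_i^-$. This reduces to the observation that a simple augmenting $s$--$t$ path cannot come back to $s$ and hence cannot use any residual reverse arc $v_i^-\to s$; the in-flow from $s$ into each $v_i^-$ is therefore unchanged, which is exactly what forces the prescribed starts $v_1,\dots,v_k$ to reappear in the decomposition of $f_1$.
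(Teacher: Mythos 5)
The paper does not actually prove \cref{theo:DirectedMenger} -- it quotes it from B\"ohme, G\"oring and Harant and later only remarks that it is recovered as a special case of \cref{theo:vertex_menger} (whose proof goes through appendages and an edge-version first). Your max-flow route via vertex splitting is therefore a genuinely different and more elementary argument, and its key point -- that a \emph{simple} augmenting $s$--$t$ path cannot use a residual reverse arc $v_i^-\to s$, so the saturated source arcs $s\to v_i^-$ stay saturated and the prescribed starts $v_1,\dots,v_k$ survive the augmentation -- is correct and is exactly what upgrades classical Menger to the B\"ohme--G\"oring--Harant form.

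There is, however, one genuine gap. Under the convention used in this paper (and the standard one for this theorem), an $X$--$Y$ path must have \emph{no internal vertex in $X\cup Y$}, and your network does not enforce this: an augmenting path may travel through $x'^-\to x'^+$ for some $x'\in X$ whose source arc is unsaturated, or through a vertex of $Y$ that has out-arcs. Concretely, take $X=\{v_1,x_2,x_3\}$, $Y=\{y_1,y_2\}$ with arcs $v_1\to a$, $a\to y_1$, $a\to x_3$, $x_3\to y_2$, $x_2\to c$, $c\to y_1$, and $P_1=v_1\,a\,y_1$. The residual network of $f_0$ admits the augmenting path $s\to x_2^-\to x_2^+\to c^-\to c^+\to y_1^-\to a^+\to x_3^-\to x_3^+\to y_2^-\to y_2^+\to t$, whose decomposition yields $P_1'=v_1\,a\,x_3\,y_2$ -- it starts at $v_1$ as required but has the internal vertex $x_3\in X$, so it is not an $X$--$Y$ path, even though valid witnesses for case (2) exist (namely $v_1\,a\,y_1$ and $x_3\,y_2$). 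The fix is easy but must be stated: first delete from $D$ every arc whose head lies in $X$ and every arc whose tail lies in $Y$ (equivalently, omit the network arcs $u^+\to x^-$ for $x\in X$ and $y^+\to v^-$ for $y\in Y$). No $X$--$Y$ path uses such an arc, so the given paths and all separating sets are unaffected, and afterwards every $s$--$t$ path in the network corresponds to a path that meets $X$ only in its first vertex and $Y$ only in its last, which closes the gap; this mirrors the preprocessing the paper itself performs (deleting edges pointing into $x$) when deducing the directed edge-version at the end of \cref{sec:edges-disjoint}.
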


In this paper we make this stronger form of Menger's Theorem available in the realm of bidirected graphs.
Bidirected graphs were first introduced by Kotzig in a series of papers~\cites{Kotzig1959,Kotzig1959b,Kotzig1960}.
These objects can be understood as a generalisation of undirected and directed graphs.
They can be obtained from undirected graphs by assigning to each endpoint of every edge one of two signs, $+$ or $-$.
Directed graphs can then be envisioned as bidirected graphs in which each edge has the sign~$+$ at one endpoint and~$-$ at the other.

Bidirected graphs have recently received increased attention by the graph theoretic community with both structural~\cites{Appa2006,wiederrecht2020thesis} and algorithmic results~\cites{Ando1996,Bolker2006}.
Especially, Wiederrecht proposed bidirected graphs as another angle of attack to investigate the structure of undirected graphs with perfect matchings.
This approach is inspired by a recent proof of Norin's Matching Grid Conjecture for bipartite graphs with perfect matchings, which was given by Hatzel, Rabinovich and Wiederrecht~\cite{hatzel2019cyclewidth}.
Their proof applies the Directed Grid Theorem~\cite{kawarabayashi2015directed} using the structural relation of directed graphs and bipartite graphs with perfect matchings.
In the same structural sense, bidirected graphs are in a one-to-one correspondence with general undirected graphs with perfect matchings; for a detailed introduction, we refer the reader to~\cite{wiederrecht2020thesis}.

Unfortunately, Menger's Theorem as in~\cref{theo:DirectedMenger} does not hold if we simply replace `directed' by `bidirected' as illustrated in \cref{theo:counterexample_to_bidirected_menger} and \cref{fig:counterexample_vertex_menger}.
The main reason for this is an intricate complication in the structure of bidirected graphs: unlike in directed graphs, a walk between two fixed vertices need not contain a path between them.
This property of bidirected graphs in particular prevents a direct transfer of the usual proofs of~\cref{theo:DirectedMenger} such as the one given in~\cite{Boehme01}.

To overcome this complication, we introduce a sufficient condition on the set~$\X$ of `signed' startvertices of our paths, where a \emph{signed vertex} is a pair of a vertex and a sign~$+$ or~$-$: we require~$\X$ to be `clean', that is, we forbid the existence of nontrivial~$\X$--$\X$ paths.
This condition is satisfied in particular by any set of startvertices in a directed graph.
Our main result then reads as follows:

\begin{restatable}{theorem}{VertexMenger}\label{theo:vertex_menger}
    Let~$\X$ and~$\Y$ be sets of signed vertices of a bidirected graph~$B$, and suppose that $\X$ is clean.
    Let~$P_1, \dots, P_k$ be vertex-disjoint $\X$--$\Y$~paths in $B$ where~$P_i$ starts in~$v_i \in V(\X)$ for~$i \in [k]$.
    Then precisely one of the following is true:
    \begin{enumerate}[label=(\arabic*)]
        \item There is a set $S$ of $k$ vertices of $B$ such that $B - S$ contains no $\X$--$\Y$~path. \label{item:vtx-sep}
        \item There are~$k + 1$ vertex-disjoint $\X$--$\Y$~paths $P_1', \dots, P_{k+1}'$ in~$B$ where~$P_i'$ starts in~$v_i$ for~$i \in [k]$. \label{item:bidirected-vtx-paths}
    \end{enumerate}
\end{restatable}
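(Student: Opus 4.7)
The plan is to prove the theorem by induction on $|E(B)|$, following the template of the classical augmenting-walk/min-cut proof of Menger's theorem. First note that \cref{item:vtx-sep} and \cref{item:bidirected-vtx-paths} are mutually exclusive by pigeonhole: no set of $k$ vertices can hit each of $k+1$ vertex-disjoint $\X$--$\Y$ paths. Hence, assuming \cref{item:vtx-sep} fails, the task is to produce the augmented system in \cref{item:bidirected-vtx-paths}.

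I would set up a residual bidirected structure $B_\P$ associated with the given system $\P = \{P_1, \dots, P_k\}$: all edges of $B$ are kept with their original signs, and additionally, along each $P_i$, a reverse traversal is permitted by flipping the sign convention along $P_i$ in the prescribed bidirected manner. To track vertex-disjointness I would further split each vertex into sign-labelled copies, so that each sign-consistent crossing occupies a distinct slot. Let $R$ be the set of signed vertices reachable in $B_\P$ from $V(\X) \setminus \{v_1, \dots, v_k\}$. If $R$ does not reach $V(\Y)$, then the boundary of $R$ should pick out exactly one vertex from each $P_i$, giving a $k$-element $\X$--$\Y$ separator and contradicting our assumption. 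Therefore $R$ meets $V(\Y)$, producing an augmenting walk $W$ in $B_\P$ from outside $\{v_1, \dots, v_k\}$ into~$\Y$.

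Composing $W$ with the pieces of $\P$ that it traverses in reverse naturally yields $k+1$ walks from $\X$ to $\Y$, with the first~$k$ still starting at $v_1, \dots, v_k$. The crux---and the main obstacle, exactly as flagged in the introduction---is that in a bidirected graph a walk between two vertices need not contain a path, so showing that these $k+1$ walks are in fact pairwise vertex-disjoint paths is far from automatic. To handle this I would choose $W$ minimising a lexicographic parameter (e.g.\ its length, then its symmetric difference with $\P$), which forbids any local shortcut. The cleanness of $\X$ enters decisively here: it forbids nontrivial $\X$--$\X$ paths and so rules out the configurations at the $\X$-end in which a sign-reversal inside $W$ could cause an output walk to revisit a vertex with $\X$-consistent signs. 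A careful case analysis of how $W$ enters and leaves each $P_i$, combined with minimality and cleanness, should then show that the $k+1$ output walks are indeed paths. I expect this walk-to-path conversion---in particular, propagating the consequences of cleanness through sign-inconsistent crossings in the interior of $W$---to be the hardest part of the argument.
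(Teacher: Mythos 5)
Your setup is reasonable as far as it goes: the mutual exclusivity argument is fine, and the extraction of a $k$-vertex separator from a non-augmentable residual structure is plausibly salvageable. But the proof has a genuine gap exactly where you flag ``the hardest part'': you never actually give the mechanism that turns the $k+1$ augmented \emph{walks} into $k+1$ vertex-disjoint \emph{paths}. Saying that a lexicographically minimal choice of $W$ ``forbids any local shortcut'' and that ``a careful case analysis should then show'' the walks are paths is a promissory note for the entire content of the theorem. There is no reason to believe a minimal augmenting walk composed with reversed segments of the $P_i$ yields paths: the failure of walks to contain paths in bidirected graphs is caused by sign conflicts at \emph{interior} vertices, which can occur arbitrarily far from $\X$, and minimality of $W$ does not prevent the composed walk from revisiting a vertex with matching signs at a place where no local rerouting is available. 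Moreover, your stated reason for why cleanness saves the day --- that it ``rules out the configurations at the $\X$-end'' --- mislocates where the difficulty lives. Cleanness is a condition about paths returning to $\X$, and to make it bite at interior vertices one needs a device that propagates it along the whole path system. That device is what this paper spends \cref{sec:appendages} constructing: the appendage $A(P,x)$, a maximal edge set in which every oriented edge admits a trail back to $x$, together with \cref{lem:goodpath} (any reachable signed vertex on the appendage is reachable \emph{inside} it) and \cref{lem:concat_paths} (a walk split into an inside piece and an outside piece does contain a path). These are nontrivial structural lemmata whose proofs use the maximality of the admissible set in an essential way; nothing in your proposal substitutes for them.

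A secondary structural difference is that the paper does not attack the vertex version directly at all: it first proves an edge-disjoint version (\cref{theo:edge_menger}) by induction on the total length of the paths, replacing the appendages by an auxiliary gadget graph, and only then transfers to the vertex version via a vertex-splitting construction, using cleanness of $\X$ to certify edge-cleanness of the new source $x$. Your single-pass residual-graph argument with vertex splitting collapses these two stages; that is not in itself fatal, but it means the one step you leave unproved has to absorb all the work that the paper distributes across \cref{lem:disjoint}, \cref{lem:goodpath}, \cref{lem:concat_paths} and the case analysis in the proof of \cref{theo:edge_menger}. As written, the proposal is a correct description of the classical proof template together with an accurate diagnosis of why it does not directly apply, but it does not close that gap.
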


The general strategy of our proof of \cref{theo:vertex_menger} follows the idea of \cite{Boehme01}, but needs several extensions in order to circumnavigate the above mentioned complication in bidirected graphs.
In particular, we introduce `appendages' as our main tool to overcome the walk-path problem for clean sets of vertices.
This allows us to first prove an edge-version of Menger's Theorem, \cref{theo:edge_menger}, from which we then deduce the vertex-version, \cref{theo:vertex_menger}.
Our proof of~\cref{theo:vertex_menger} can also be turned into a polynomial-time algorithm which finds a maximal sets of disjoint paths.

\textbf{Structure of the paper:}
After introducing notation and general concepts around bidirected graphs in \cref{sec:Preliminaries}, we show in \cref{sec:counterexample} that Menger's Theorem cannot be transferred directly to bidirected graphs.
We then introduce `appendages' in \cref{sec:appendages} and use it to first prove Menger's Theorem for edge-disjoint paths in~\cref{sec:edges-disjoint}.
Finally, we deduce the vertex-version, \cref{theo:vertex_menger}, from the edge-disjoint one in \cref{sec:vertex-disjoint}.
In~\cref{sec:polynomial_time_algorithm} we describe how our proof of~\cref{theo:vertex_menger} can be turned into a polynomial-time algorithm.

\section{Preliminaries}\label{sec:Preliminaries}

Throughout the paper we assume $\N = \{0, 1, 2, \dots\}$.
For numbers~$k \leq \ell$ we write~$[k, \ell] := \{n \in \N : k \leq n \leq \ell\}$, and we abbreviate~$[k] := [1, k]$.

\subsection{Basic Definitions for Bidirected Graphs}

For the basic definitions concerning undirected graphs we refer the reader to \cite{DiestelBook2016} and for the notions around directed graphs we refer the reader to \cite{bang2008}.
Undirected and directed graphs considered in this paper are allowed to have parallel edges, but they do not have loops.

A \emph{bidirected graph}~$B = (G, \sigma)$ consists of a graph~$G = (V, E)$, a corresponding set of \emph{half-edges} defined as
    \begin{equation*}
        \halfedges(B) := \set{ (u, e) \colon e \in E \text{ and } u \in e }
    \end{equation*}
and a \emph{signing}~$\sigma: \halfedges \to \set{+, -}$ assigning to each half-edge~$(u, e)$ its \emph{sign}~$\sigma(u, e) := \sigma((u,e))$; we say that~\emph{$e$ has sign~$\sigma(u,e)$ at~$u$}.
Then~$V(B) := V$ is the \emph{vertex set} of~$B$ and~$E(B) := E$ is its \emph{edge set}.
We refer to the elements of~$V(B)$ and~$E(B)$ as the \emph{vertices} and the \emph{edges} of~$B$, respectively.

For technical simplification, we do not allow bidirected graphs to have distinct edges~$e$ and~$f$ that have the same endvertices and the same signs at them.
This simplification does not affect the main theorem, \cref{theo:vertex_menger}, which still holds without this assumption.
Indeed, one can easily deduce the statement for bidirected graphs with parallel edges by applying \cref{theo:vertex_menger} to the bidirected graph obtained by subdividing each edge and giving the two edges incident to a newly arising subdivision vertex distinct signs at it.

A \emph{signed vertex} of~$B$ is a pair~$(v, \alpha)$ of a vertex~$v$ of~$B$ and a sign~$\alpha \in \{+, -\}$.
Given a set~$\X$ of signed vertices, we write~$V(\X) := \{ v \in V(B) \mid \exists\ \alpha \in \{+, -\}: (v, \alpha) \in \X \}$.

An \emph{oriented edge}~$\ve$ of a bidirected graph~$B$ is formally defined as a triple~$(e, u, v)$ where~$e$ is an edge of~$B$ with incident vertices~$u, v \in V(B)$; we call~$e$ its \emph{underlying edge},~$u$ its \emph{startvertex} and~$v$ its~\emph{endvertex}, and think of~$\ve$ as orienting~$e$ from~$u$ to~$v$.
The edge~$e$ has precisely two \emph{orientations}, one with startvertex~$u$ and endvertex~$v$ and the other one with startvertex~$v$ and endvertex~$u$.
We denote the two orientations of~$e$ as~$\ve$ and~$\ev$; there is no default orientation of~$e$, but if we are given one of them as~$\ve$, say, then the other one is written as~$\ev$.
Given an oriented edge~$\ve$ we conversely write~$e$ for its underlying edge.
Given a set~$A$ of edges of~$B$, we write~$\vAv$  for the set of all orientations of edges in~$A$.

In every \namecref{fig:counterexample_edge_menger} involving bidirected graphs, we depict the signs of halfedges in a particular manner by drawing the signs onto the edge, which results in a bar perpendicular to the edge at incident vertices with sign $+$ and none at incident vertices with sign $-$, see e.g.\ \cref{fig:counterexample_edge_menger}.

\subsection{Connectivity in Bidirected Graphs}

Let~$B = (G, \sigma)$ be a bidirected graph.
A sequence
\begin{equation*}
    W = v_0 \ve_1 v_1 \ve_2 v_2 \dots v_{\ell - 1} \ve_\ell v_\ell
\end{equation*}
of vertices~$v_i \in V(B)$ and oriented edges~$\ve_j \in \vE(B)$ is a~\emph{walk}~$W$ of \emph{length}~$\ell$ in~$B$ if the oriented edge~$\ve_j$ has startvertex~$v_{j-1}$ and endvertex~$v_j$ for~$j \in [\ell]$ and we have~$\sigma(v_i, e_i) \neq \sigma(v_i, e_{i+1})$ for~$i \in [\ell - 1]$.
A \emph{subwalk} of~$W$ is a walk of the form~$v_i \ve_i \dots \ve_{j-1} v_{j}$ for some~$i \le j$.
The \emph{inverse of~$W$} is~$\inv{W} := v_\ell \ev_{\ell} v_{\ell-1} \ev_{\ell-1} v_{\ell-2} \dots v_1 \ev_1 v_0$.
A walk is \emph{trivial} if it has length~$0$ and \emph{nontrivial} otherwise.

The walk $W$ \emph{starts} in its~\emph{startvertex}~$v_0$ and \emph{ends} in its \emph{endvertex}~$v_\ell$.
If $W$ is nontrivial, we say that it starts with sign~$\sigma(e_1, v_0)$ in its \emph{signed startvertex} $(v_0, \sigma(e_1, v_0))$.
Likewise we say that it ends with sign~$\sigma(e_\ell, v_\ell)$ in its \emph{signed endvertex} $(v_\ell, \sigma(e_\ell, v_\ell))$.
All other vertices of~$W$ are \emph{internal vertices} of~$W$.
The set of all vertices in~$W$ is denoted as~$V(W) := \{v_0, \dots, v_\ell\}$.
The nontrivial walk~$W$ \emph{arrives in~$v_i$} with sign~$\sigma(e_i, v_i)$ for~$i \in [1, \ell]$ and \emph{leaves~$v_i$} with sign~$\sigma(e_{i+1}, v_i)$ for~$i \in [0, \ell - 1]$.
Analogously, we say that a nontrivial walk $W$ \emph{starts} in~$\ve_1$ and \emph{ends} in~$\ve_\ell$.

The set of all oriented edges in a walk~$W$ is denoted by~$\vE(W) := \{\ve_1, \dots, \ve_\ell\}$, the set of edges underlying~$\vE(W)$ is~$E(W) := \{e_1, \dots, e_\ell\}$, and we write~$\vEv(W) := \vecev{E(W)}$.
If~$E(W) \subseteq A$ for some set~$A$ of edges of~$B$, then~$W$ is a walk \emph{in~$A$}.

A nontrivial walk~$W$ is an~\emph{$x$--$y$~walk} in~$B$ if~$W$ starts in~$x$ and ends in~$y$, where~$x$ and~$y$ can be oriented edges or vertices or signed vertices of~$B$.
Given two sets~$\X$ and~$\Y$ of signed vertices of~$B$ we define an~\emph{$\X$--$\Y$~walk} as follows:
a trivial walk $W = v$ is an $\X$--$\Y$ walk if there is $\alpha \in \{ +, - \}$ such that $(v,\alpha) \in \X$ and $(v,-\alpha) \in \Y$.
A nontrivial walk $W$ is an $\X$--$\Y$ walk if it is an $x$--$y$ walk for some $x \in \X$ and $y \in \Y$, no internal vertex of $W$ is in $V(\X) \cup V(\Y)$ and neither the start- nor the endvertex of $W$ forms a trivial $\X$--$\Y$ walk.
Note that no proper subwalk of an $\X$--$\Y$~walk is again an $\X$--$\Y$~walk.

Two walks~$W$ and~$W'$ in~$B$ are \emph{vertex-disjoint} if~$V(W) \cap V(W') = \emptyset$.
Similarly,~$W$ and~$W'$ are \emph{edge-disjoint} if~$E(W) \cap E(W') = \emptyset$.
A set of walks is \emph{vertex-disjoint} (respectively \emph{edge-disjoint}) if its elements are pairwise vertex-disjoint (respectively edge-disjoint).\\

A walk~$W$ in~$B$ is a~\emph{trail} if all edges underlying~$\vE(W)$ are distinct.
If not only all edges, but also all vertices of~$W$ are distinct, then~$W$ is a \emph{path}.
For both trails and paths we transfer all notions and notation defined for walks (such as e.g.\ subtrail or~$v$--$w$~path).

A key difference between bidirected and both undirected and directed graphs which complicates understanding their connectivity properties considerably lies in the relation of walks, trails and paths:
unlike for (un)directed graphs, the existence of a~$v$--$w$~walk between two vertices~$v$ and~$w$ in a bidirected graph~$B$ does not imply that there exists a~$v$--$w$~trail in~$B$, and similarly a~$v$--$w$~trail does not guarantee the existence of a~$v$--$w$~path (see~\cite{wiederrecht2020thesis}*{Figure 9.1} for examples). \\

For a given trail~$Q = v_0 \ve_1 v_1 \ve_2 v_2 \dots v_{\ell - 1} \ve_\ell v_\ell$ and~$0 \le i \le j \le \ell$, we write $\ve_i Q := v_{i-1} \ve_i \dots v_\ell$, $Q \ve_i := v_0 \dots \ve_i v_i$ and~$\ve_i Q \ve_j := v_{i-1} \ve_i \dots \ve_j v_j$ for the appropriate subtrails of~$Q$.
Analogously, given a path~$P = v_0 \ve_1 v_1 \ve_2 v_2 \dots v_{\ell - 1} \ve_\ell v_\ell$ and~$0 \le i \le j \le \ell$, we write~$v_i P := v_i \dots v_\ell$,~$P v_i := v_0 \dots v_i$ and~$v_i P v_j := v_i \dots v_j$ for the appropriate subpaths of~$P$, and let~$\ve_i P v_j := v_{i-1} P v_j$ as well as~$v_i P \ve_j := v_i P v_j$.
In particular, these two notions combine for paths to the respective notations~$\ve_i P v_j$ and~$v_i P \ve_j$.

We use an analogous notion for the concatenation of trails at edges:
for example if the union~$Q_1 \ve \cup \ve Q_2 \vf \cup \vf Q_3$ of three trails is again a trail, we denote it as~$Q_1 \ve Q_2 \vf Q_3$.
We write similarly~$P_1 x P_2 y P_3$ for paths where~$x, y$ may be both vertices or oriented edges. \\

The complicated connectivity structure of bidirected graphs also manifests in various different notions of strong connectivity (see~\cite{wiederrecht2020thesis}*{Section~9.2} for an overview).
We discuss two of them, namely `strongly connected' and `circularly connected', in the remainder of this section.
Let us first define `strongly connected':

\begin{definition}
    We say a bidirected graph $B$ is {\em strongly connected} if for any two vertices $v$ and $w$ of $B$ there are signs $\alpha$ and $\beta$ such that $B$ contains both a $(v,\alpha)$--$(w,\beta)$ path and a $(v, -\alpha)$--$(w, -\beta)$ path.
\end{definition}

\noindent It turns out that an apparently weaker condition is equivalent to the one in this definition:

\begin{lemma}\label{lem:weakerstrong}
    Let $B$ be a bidirected graph such that for any two vertices $v$ and $w$ of $B$ there are a $(v,+)$--$w$ path and a $(v,-)$--$w$ path in $B$.
    Then $B$ is strongly connected.
\end{lemma}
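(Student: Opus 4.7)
The plan is to fix two vertices $v$ and $w$ of $B$ (with $v \neq w$) and produce the two paths required by the definition of strongly connected. The hypothesis supplies, for any ordered pair of vertices, a path from $(v,+)$ to $w$ and a path from $(v,-)$ to $w$, but it says nothing about the sign with which these paths arrive at the endvertex $w$. Managing these arrival signs at $w$ is the entire content of the proof.

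First I would invoke the hypothesis on the pair $(v,w)$: it yields a $(v,+)$--$(w,\gamma_1)$ path $P_1$ and a $(v,-)$--$(w,\gamma_2)$ path $P_2$ for some signs $\gamma_1,\gamma_2$. If $\gamma_1 \neq \gamma_2$, then $P_1$ and $P_2$ are already the two paths required by the definition, with $\alpha = +$ and $\beta = \gamma_1$, and we are done.

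Otherwise $\gamma_1 = \gamma_2$, and by the obvious symmetry of swapping the two signs I may assume $\gamma_1 = \gamma_2 = +$, so both $P_1$ and $P_2$ arrive at $w$ with sign $+$. To produce a path arriving at $w$ with sign $-$, I would now invoke the hypothesis on the reversed pair $(w,v)$: it provides a $(w,-)$--$v$ path. Its inverse $\inv{W}$ (defined in the preliminaries, and clearly still a path since the sign-alternation condition is symmetric in time and the vertex sequence stays all-distinct) is a $(v,\beta)$--$(w,-)$ path $P_3$ for some sign $\beta$. Depending on the value of $\beta$ this path combines with exactly one of $P_1,P_2$ to finish the proof: if $\beta = -$, then $P_1$ and $P_3$ form a $(v,+)$--$(w,+)$ and a $(v,-)$--$(w,-)$ pair, giving the case $\alpha = + = \beta'$ of the definition; if $\beta = +$, then $P_3$ and $P_2$ form a $(v,+)$--$(w,-)$ and a $(v,-)$--$(w,+)$ pair, giving $\alpha = +$, $\beta' = -$.

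The argument is short and elementary; there is no real obstacle. The one conceptual point worth flagging in the writeup is that the hypothesis must be applied in \emph{both} directions, to $(v,w)$ and to $(w,v)$, in order to be able to reach $w$ with whichever sign we happen to need, and that taking the inverse of a walk preserves being a path in the bidirected setting.
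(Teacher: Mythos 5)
Your proof is correct and follows essentially the same route as the paper's: apply the hypothesis to both ordered pairs $(v,w)$ and $(w,v)$, reverse the paths obtained from the second application, and resolve the remaining case by matching arrival signs at $w$. The only difference is cosmetic — you fetch the third path only after the first two fail to have opposite arrival signs, whereas the paper collects all four paths up front — so the case analysis is marginally leaner but the underlying argument is identical.
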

\begin{proof}
    Let $v$ and $w$ be vertices of $B$. We must find paths joining them as in the definition of strong connectivity. By assumption there are signs $\beta_1$ and $\beta_2$ such that $B$ contains a $(v, +)$--$(w, \beta_1)$ path and a $(v, -)$--$(w, \beta_2)$ path. Similarly, applying the assumption to $w$ and $v$ then reversing the paths shows that there are signs $\alpha_1$ and $\alpha_2$ such that $B$ contains a $(v, \alpha_1)$--$(w, +)$ path and a $(v, \alpha_2)$--$(w, -)$ path. 
    
    If $\beta_2 = - \beta_1$ then we are done, so we may suppose $\beta_1 = \beta_2$. Similarly if $\alpha_2 = -\alpha_1$ then we are done, so we may suppose $\alpha_1 = \alpha_2$. Setting $\alpha := -\alpha_1$ and $\beta := \beta_1$ we are done.
\end{proof}

We now turn to `circularly connected'.
For this, a \emph{cycle} in a bidirected graph~$B = (G, \sigma)$ is a trail~$C = v_0 \ve_1 v_1 \ve_2 v_2 \dots v_{\ell - 1} \ve_\ell v_\ell$ in~$B$ whose vertices are all distinct except~$v_0 = v_\ell$ where we also have~$\sigma(v_0, e_1) \neq \sigma(v_\ell, e_\ell)$.

\begin{definition}
    Let $B = (G,\sigma)$ be a bidirected graph and consider the undirected graph $H := (V(G),F)$ where $F$ is the set of edges of $G$ that lie on some cycle of $B$.
    We refer to the connected components of $H$ as the {\em circular components} of $B$.
    If $H$ is connected we call $B$ \emph{circularly connected}.
\end{definition}

We now aim to show that `strongly connected' and `circularly connected' in fact describe the same type of connectivity.
This result was already known (see for example \cite{wiederrecht2020thesis}*{Section~9.2}), but we include a proof here for the convenience of the reader.

\begin{theorem}\label{theo:strongly_connected_equivalent_to_circularly_connected}
    A bidirected graph $B = (G,\sigma)$ is strongly connected if and only if it is circularly connected.
\end{theorem}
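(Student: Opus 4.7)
The plan is to prove the two implications separately; the main challenge will be the walk-path pathology of bidirected graphs, which causes difficulties in the converse direction.

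For the forward direction, fix $v, w \in V(B)$ and apply strong connectivity to obtain a $(v,\alpha)$--$(w,\beta)$ path $P_1$ and a $(v,-\alpha)$--$(w,-\beta)$ path $P_2$. I would choose such a pair with $|E(P_1)| + |E(P_2)|$ minimum. The key observation is that if $P_1$ and $P_2$ are internally vertex-disjoint, then the concatenation of $P_1$ with $\inv{P_2}$ is a cycle of $B$: at $v$ the closure condition holds because $P_1$ leaves $v$ with sign $\alpha$ while $\inv{P_2}$ arrives at $v$ with sign $-\alpha$, and symmetrically at $w$, whereas at internal vertices the sign alternation is inherited from $P_1$ and $P_2$. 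Hence $v$ and $w$ lie on a common cycle and are thereby in the same component of $H$. The reduction to the disjoint case comes from a swap at any common internal vertex $z$: one exchanges the $z$--$w$ tails of $P_1$ and $P_2$ to obtain paths with the same sign-opposition structure (possibly after flipping $\beta$ to $-\beta$); a short case analysis of the signs at $z$ shows that this either produces a strictly shorter valid pair (contradicting minimality) or directly yields a cycle through $z$ from which one can conclude, by a further application of strong connectivity to $v,z$ and $z,w$, that $v$ and $w$ are in the same circular component.

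For the converse direction, \cref{lem:weakerstrong} reduces the task to exhibiting, for any $v$ and $w$, both a $(v,+)$--$w$ path and a $(v,-)$--$w$ path. I would pick a shortest path $v = u_0, u_1, \dots, u_k = w$ in $H$ and induct on $k$. The base case $k = 1$ is immediate: the two arcs of a cycle through the edge $u_0 u_1$ leave $u_0$ with opposite signs and each is itself a $u_0$--$u_1$ path, so both required paths appear simultaneously. For the inductive step, apply the inductive hypothesis to obtain a $(v,\epsilon)$--$u_{k-1}$ path $P$ arriving at $u_{k-1}$ with some sign $\delta$, and choose the arc $Q$ of the cycle through $u_{k-1} u_k$ that leaves $u_{k-1}$ with sign $-\delta$; the concatenation $PQ$ is then a valid walk from $(v,\epsilon)$ to $w$. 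The hard part will be that $PQ$ need not contain a path with the same signed endpoints, which is precisely the walk-path pathology of bidirected graphs. I would resolve this via a last-intersection argument: let $z$ be the vertex of $Q$ closest to $w$ along $Q$ that also appears on $P$; if the signs at $z$ are compatible, then $v P z Q w$ is the required path, and otherwise the same data assemble into a cycle through $z$ that feeds back into the inductive scheme.
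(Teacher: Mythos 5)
Both directions of your proposal break down at the point where the walk--path pathology actually bites, and in each case the claimed dichotomy is not established.

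In the direction ``strongly connected $\Rightarrow$ circularly connected'', the swap at a common internal vertex $z$ does not do what you claim. Write $\gamma_1,\gamma_2$ for the signs with which $P_1,P_2$ arrive at $z$. If $\gamma_1=\gamma_2$, exchanging the $z$--$w$ tails produces (after choosing $z$ as the first vertex of $P_1$ on $P_2$, so that at least one concatenation is a genuine path) a $(v,\alpha)$--$(w,-\beta)$ path -- but this pair has the \emph{same} total length as before and realises the sign pattern $(\alpha,-\beta)/(-\alpha,\beta)$, so it neither contradicts minimality nor closes into a cycle: the two initial segments arrive at $z$ with equal signs and the two tails leave $z$ with equal signs, so no concatenation of them is even a walk. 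Your fallback, ``apply strong connectivity to $v,z$ and $z,w$'', restarts the whole problem for new pairs with no decreasing measure, so the recursion has no termination argument. Note also that the reduction to internally disjoint $P_1,P_2$ cannot be avoided (two triangles sharing a vertex give a strongly connected bidirected graph in which $v$ and $w$ lie on no common cycle), so this is not a technicality. The paper sidesteps both problems: it argues by contradiction from a single edge $e$ joining two distinct circular components, chooses $\alpha,\beta$ relative to the signs of $e$ so that in the ``equal signs at $z$'' case the resulting $(x,\alpha)$--$(y,-\beta)$ path closes up with $e$ into a cycle, and packages the recursion as \cref{lem:newsigns}, an induction on the total length of the two tails carried out inside their union.

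In the direction ``circularly connected $\Rightarrow$ strongly connected'' (your reduction via \cref{lem:weakerstrong} and the induction along a shortest path in $H$ are fine), the last-intersection repair is insufficient. Taking $z$ to be the vertex of $Q$ closest to $w$ that lies on $P$ only controls intersections of $zQw$ with $P$; when the signs at $z$ are incompatible you would need the \emph{other} arc of the cycle $C$ from $z$ to $w$, which is not contained in $Q$ and may meet $P$ arbitrarily, and the proposed ``cycle through $z$ that feeds back into the induction'' is not well-founded because the $H$-distance from $v$ to $z$ is uncontrolled. The correct choice -- the one the paper makes -- is to let $z$ be the \emph{first} vertex of $P$ on the whole cycle $C$: then $Pz$ meets $C$ only at $z$, the two $C$-edges at $z$ carry opposite signs, so one of the two arcs of $C$ from $z$ to the target leaves $z$ with the complementary sign, and the concatenation is automatically a path. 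With that choice the incompatible case never arises and no further recursion is needed.
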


\noindent For the proof of~\cref{theo:strongly_connected_equivalent_to_circularly_connected}, we make use of a lemma that describes which types of connectivity are forced by the interaction of paths between two vertices that differ in the signs at their endpoints:

\begin{lemma}\label{lem:newsigns}
    Let $v$ and $w$ be vertices of a bidirected graph $B$.
    Let $\alpha$ and $\beta$ be signs such that $B$ contains both a $(v,\alpha)$--$(w,\beta)$ path $P$ and a $(v, -\alpha)$--$(w, -\beta)$ path $Q$. Then at least one of the following statements holds:
    \begin{itemize}
        \item $v$ and $w$ lie in the same circular component of $B$.
        \item There is a $(v, \alpha)$--$(w, -\beta)$ path in $B$.
    \end{itemize}
\end{lemma}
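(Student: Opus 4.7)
The plan is to proceed by induction on $|E(P)| + |E(Q)|$, with the central structural device being the first vertex $u$ of $P$ (starting from $v$) that also lies on $Q$. Such a $u$ always exists since $w \in V(P) \cap V(Q)$, and by its choice no internal vertex of $vPu$ belongs to $V(Q)$. Write $s_P$ and $s_Q$ for the signs with which $P$ and $Q$ respectively arrive at $u$.

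If $u = w$, i.e., $P$ and $Q$ are internally vertex-disjoint, then the concatenation $Q \cdot \inv{P}$ is a closed trail from $v$ to $v$ passing through $w$. The signs $-\alpha$ and $\alpha$ at $v$ and $-\beta$ and $\beta$ at $w$ give both the walk condition and the cycle condition, and $Q$ and $\inv{P}$ share no vertex apart from $v$ and $w$, so this closed trail is in fact a cycle. It contains both $v$ and $w$, hence they share a circular component and we obtain the first conclusion.

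If $u$ is an internal vertex of $P$, I split into two subcases according to the signs at $u$. When $s_P = s_Q$, I form $W := vPu \cdot uQw$: the walk condition at $u$ holds since the arriving sign $s_P$ differs from the departing sign $-s_Q = -s_P$, and by the choice of $u$ the internal vertices of $vPu$ lie outside $V(Q) \supseteq V(uQw)$, so $V(vPu)$ and $V(uQw)$ share only $u$ and $W$ is a genuine $(v,\alpha)$--$(w,-\beta)$ path. When $s_P = -s_Q$, I instead form $C := vPu \cdot \inv{(vQu)}$; the walk condition at $u$ and the cycle condition at $v$ are easy sign checks, and because the internal vertices of $vPu$ avoid $V(Q)$, the two halves meet only in $v$ and $u$, so $C$ is a cycle placing $v$ and $u$ in one circular component of $B$. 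I then apply the induction hypothesis to $uPw$ and $uQw$ inside the bidirected subgraph $B'' := B - (V(vPu) \setminus \{u\})$: both are paths in $B''$, their total edge count is strictly less than $|E(P)| + |E(Q)|$, and (using $-s_Q = s_P$) their signs at $u$ and $w$ exhibit exactly the lemma's hypothesis for the pair $(u,w)$. The induction returns either that $u$ and $w$ share a circular component of $B''$ (and hence of $B$), which together with $C$ forces $v$ and $w$ into one circular component, or a $(u,-s_P)$--$(w,-\beta)$ path $R$ in $B''$; in the latter case, since $R$ avoids $V(vPu) \setminus \{u\}$, the concatenation $vPu \cdot R$ is a $(v,\alpha)$--$(w,-\beta)$ path in $B$.

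The main obstacle is the familiar walk/path mismatch in bidirected graphs: both $vPu \cdot uQw$ in the first subcase and $vPu \cdot R$ in the second are a priori merely walks. Choosing $u$ as the first intersection on $P$ and confining the inductive step to $B''$ (which excludes the interior of $vPu$) are precisely what prevents self-intersections and upgrades these walks into honest paths.
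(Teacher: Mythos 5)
Your proof is correct and follows essentially the same route as the paper's: induction on the total length, taking the first vertex $u$ of $P$ after $v$ that lies on $Q$, splitting on the signs at $u$, and either concatenating directly or building the cycle $vPu\cdot\inv{(vQu)}$ and recursing on the tails. The only differences are cosmetic — you phrase the case split via arrival rather than departure signs, apply the induction hypothesis in $B-(V(vPu)\setminus\{u\})$ where the paper restricts to the union of the two tails, and treat the internally disjoint case $u=w$ explicitly (do note that $u$ should be defined as the first vertex of $P$ \emph{other than} $v$ lying on $Q$, as your subsequent argument assumes).
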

\begin{proof}
    The proof proceeds by induction on the sum of the lengths of the paths $P$ and $Q$.
    If this sum is $0$, then $v = w$ and therefore the first statement holds.
    Otherwise, let $x$ be the first vertex of $P$ other than $v$ which also lies on $Q$. Suppose that $xP$ is an $(x, \gamma)$--$(w, \beta)$ path and that $xQ$ is an $(x,\delta)$--$(w, -\beta)$ path. 
    \begin{description}
        \item[Case 1: $\gamma = \delta$] In this case, $PxQ$ is a $(v, \alpha)$--$(w, -\beta)$ path in $B$.
        \item[Case 2: $\gamma = -\delta$] In this case we can apply the induction hypothesis to $x$, $w$, $\gamma$, $\beta$ and the paths $xP$ and $xQ$ in the bidirected graph $B'$ given by the union of $xP$ and $xQ$. There are two possibilities.
        \begin{description}
            \item[Case 2.1: $x$ and $w$ lie in the same circular component of $B$] As the path $Px$ is a $(v, \alpha)$--$(x, -\gamma)$~path, $Qx$ is a $(v, -\alpha)$--$(x, \gamma)$~path, and these paths are disjoint except at their endvertices, their union is a cycle. This cycle witnesses that $v$ is in the same circular component as $x$, hence also as $w$.
            \item[Case 2.2: There is an $(x, \gamma)$--$(w, -\beta)$ path $R$ in $B'$] In this case, $PxR$ is a $(v, \alpha)$--$(w, -\beta)$ path in $B$. 
        \end{description}
    \end{description}
\end{proof}

\begin{proof}[Proof of~\cref{theo:strongly_connected_equivalent_to_circularly_connected}]
    Suppose first that $B$ is circularly connected.
    By \cref{lem:weakerstrong} it is enough to show that for any vertices $v$ and $w$ of $B$ there are both a $(v, +)$--$w$ path and a $(v, -)$--$w$ path.
    Let $v \in V(B)$ and $\alpha \in \{+, - \}$ be arbitrary.
    We set $X \subseteq V(B)$ to be the set of all vertices $x$ for which there is a $(v, \alpha)$--$x$ path.
    Note that the set $X$ is non-empty since it contains $v$.
    We show that $X$ is the whole vertex set of $B$.
    Suppose not, then by circular connectivity there is some edge $e$ of $B$ contained in some cycle $C$ of $B$ which joins some~$x \in X$ and some~$y \not \in X$.

    There is a $(v, \alpha)$--$x$ path $P$ by definition of $X$.
    Let $z$ be the first vertex of $P$ on $C$ and let $\beta \in \{+, -\}$ such that the (possibly trivial) path $Pz$ ends in $(z, \beta)$.
    Then there is a $(z, -\beta)$--$y$ path $Q$ contained in $C$.
    Thus, $PzQ$ is a $(v, \alpha)$--$y$ path, contradicting $y \not \in X$.
    Since this is true for any $v \in V(B)$ and any $\alpha \in \{+,-\}$, this completes this direction of the proof.
    
    Now suppose instead that $B$ is strongly connected, and suppose for a contradiction that it is not circularly connected.
    Then it must contain an edge $e$ which joins two vertices $x$ and $y$ in different circular components.
    Let $\sigma(x,e) = -\alpha$ and $\sigma(y,e) = \beta$.
    Suppose there is no $(x,\alpha)$--$(y,-\beta)$ path.
    Since $B$ is strongly connected, there must be an $(x,\alpha)$--$(y,\beta)$ path and an $(x,-\alpha)$--$(y,-\beta)$ path.
    Applying \cref{lem:newsigns}, we yield an $(x,\alpha)$--$(y,-\beta)$ path in $B$.
    That path together with $e$ is a cycle in $B$, contradicting our assumption that $x$ and $y$ lie in different circular components.
\end{proof}

\section{Counterexample for General Menger Theorem}\label{sec:counterexample}
In this section we show that Menger's Theorem, \cref{theo:DirectedMenger}, does not hold true in bidirected graphs if we transfer the statement verbatim. Even if we allow the separating set $S$ to have any fixed size $k$ for some $k \in \N$, the statement is false:

\begin{theorem}\label{theo:counterexample_to_bidirected_menger}
    For each number~$k \in \N$, there exists a bidirected graph~$B$ and disjoint sets~$\X$ and~$\Y$ of signed vertices of~$B$, each of size at least~$k$, such that there are no two disjoint~$\X$--$\Y$~paths and for each subset~$S \subseteq V(B)$ of size~$k$ there exists an~$\X$--$\Y$~path in~$B - S$.
\end{theorem}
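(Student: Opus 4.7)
My plan is to prove \cref{theo:counterexample_to_bidirected_menger} by exhibiting, for each $k \in \N$, an explicit bidirected graph $B_k$ with suitable signed-vertex sets $\X$ and $\Y$ each of size at least $k$, and then verifying the two required properties directly. The construction will exploit the walk--path gap emphasised in \cref{sec:Preliminaries}: in bidirected graphs, walks between two vertices need not yield paths between them, so the combinatorics of walks and of paths can diverge sharply in a way impossible for directed graphs.

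The template I have in mind is as follows. Take $k+1$ distinct source vertices $x_1, \dots, x_{k+1}$ and $k+1$ distinct sink vertices $y_1, \dots, y_{k+1}$, and set $\X := \{(x_i, +) : i \in [k+1]\}$ and $\Y := \{(y_j, -) : j \in [k+1]\}$; this immediately gives $|\X|, |\Y| = k+1 \geq k$. The rest of $B_k$ is a sign-constrained bidirected `core' $H$ joining the sources to the sinks, whose signs are chosen so that the family of $\X$--$\Y$ paths is pairwise vertex-intersecting (so no two of them are vertex-disjoint) yet has minimum transversal greater than $k$ (so no $k$-vertex subset of $V(B_k)$ meets every $\X$--$\Y$ path). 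The verification then splits into two parts: first, enumerate the $\X$--$\Y$ paths using the sign constraints at their internal vertices and show by case analysis that any two of them share a vertex; and second, for an arbitrary $S \subseteq V(B_k)$ with $|S| = k$, use pigeonhole to pick $x_i, y_j \notin S$ and explicitly construct an $(x_i,+)$--$(y_j,-)$ path in $B_k - S$, exploiting the abundance of walks admitted by $H$ to route around $S$.

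The main obstacle will be to design $H$ so that both properties hold simultaneously. The tension is that a naive construction with many independent routes immediately produces vertex-disjoint paths (breaking property (1)), while a construction with a single forced bottleneck gives a size-$1$ separator (breaking property (2)). The resolution lies in the walk--path gap: by an appropriate sign configuration, $H$ can admit plenty of walks bypassing any fixed set of $k$ vertices while forcing every path to `pin' at one of several bottleneck vertices which collectively form a hitting set of size greater than $k$, so that no set of $k$ vertices kills all paths, yet sign-alternation forbids two paths from avoiding every pin simultaneously.
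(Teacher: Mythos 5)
Your proposal is a plan rather than a proof: the entire content of \cref{theo:counterexample_to_bidirected_menger} lies in exhibiting a concrete core $H$ and verifying the two properties, and that is precisely the part you defer. You correctly identify the tension (many independent routes give disjoint paths; a single bottleneck gives a small separator) and correctly guess the shape of the second half (a family of paths with small pairwise overlap plus pigeonhole), but the claim that ``an appropriate sign configuration'' resolves the tension is exactly the assertion that needs proof, and no candidate sign configuration is given. In particular, your proposed verification that no two $\X$--$\Y$ paths are disjoint --- ``enumerate the paths using the sign constraints and do a case analysis'' --- has no substance until $H$ is fixed, and for any $H$ rich enough to defeat all $k$-element separators such an enumeration is unlikely to be feasible by brute force.

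For comparison, the paper's construction is a triangular grid of vertices $x_{i,j}$ with $1 \leq i+j \leq 2k+1$, with horizontal and vertical edges signed so that (ignoring the boundary) the grid is a directed graph in which no edge points into $\X$ or $\Y$, together with ``diagonal'' edges along the antidiagonal carrying sign $-$ at both ends. The two verifications then each rest on a specific idea your proposal lacks. First, every $\X$--$\Y$ path must traverse a diagonal edge, and the graph embeds in a disc with $\X$, $\Y$ and the antidiagonal arranged on the boundary so that the sub-arcs of any two $\X$--$\Y$ paths would have to cross by a planarity lemma (arcs joining interleaved boundary points of a disc must meet); this topological argument, not a sign case analysis, is what forbids two disjoint paths. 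Second, there are $2k+1$ explicit canonical paths in which every vertex occurs at most twice, so $k$ deleted vertices can destroy at most $2k$ of them. Without an analogue of the first mechanism your outline cannot be completed, so as it stands there is a genuine gap.
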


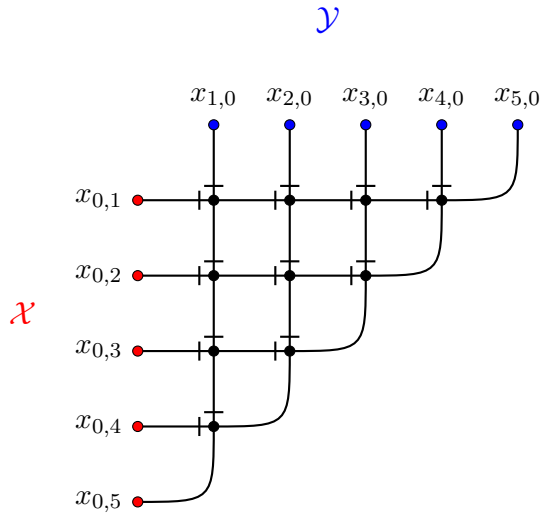
\begin{figure}[h]
\centering
	\begin{tikzpicture}
\foreach \i in {1,...,5}{
    \draw [thick] (\i-1,\i-6) ..controls+(right:1cm) and +(down:1cm)  .. (\i,\i-5);
    \foreach \j in {1,...,5}
        {\ifnum\numexpr \i+\j<6		
        \node[dot] at (\i,-\j) [] {};
        \draw[-e+4, thick]  (\i,-\j+1) to (\i,-\j);
        \draw[-e+4, thick] (\i-1,-\j) to (\i,-\j);
        \fi}
        \node[dot,fill = red] at (0,-\i) [label=left:$x_{0,\i}$]{};	
    \node[dot,fill = blue]  at (\i,0)  [label=above:$x_{\i,0}$]{};
	}
\node[left, red] at (-1.2,-2.5) {\large$\X$};
\node[above, blue] at (2.5,1.1) {\large$\Y$};
\end{tikzpicture}
\caption{A bidirected graph containing neither two vertex-disjoint \textcolor{red}{$\X$}--\textcolor{blue}{$\Y$} paths nor a vertex set $W$ of size $\leq 2$ such that in $B - W$ there are no \textcolor{red}{$\X$}--\textcolor{blue}{$\Y$} paths.}
    \label{fig:counterexample_vertex_menger}
\end{figure}

\noindent For the proof of \cref{theo:counterexample_to_bidirected_menger} we will rely on the following well-known topological lemma, which follows directly from the fact that a complete graph on five vertices is not planar:

\begin{lemma}\label{lem:topology}
    Let $x_1$, $x_2$, $y_1$ and $y_2$ be points appearing in the clockwise order on the boundary of the closed disc. Then any arc from $x_1$ to $y_1$ meets any arc from $x_2$ to $y_2$. \qed
\end{lemma}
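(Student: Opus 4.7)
My plan is to argue by contradiction using the non-planarity of $K_5$, exactly as the lemma statement suggests. Suppose for contradiction that there exist an arc $\alpha$ from $x_1$ to $y_1$ and an arc $\beta$ from $x_2$ to $y_2$, both contained in the closed disc $D$, which do not meet. From these I will construct a planar drawing of $K_5$, yielding the desired contradiction.

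I begin by fixing a fifth point $v$ in the exterior $\mathbb{R}^2 \setminus D$ and taking the vertex set of $K_5$ to be $\{x_1,x_2,y_1,y_2,v\}$. I then realise each of the ten edges of $K_5$ as an arc, as follows. The four pairs of cyclically consecutive points among $x_1,x_2,y_1,y_2$, namely $\{x_1,x_2\}$, $\{x_2,y_1\}$, $\{y_1,y_2\}$ and $\{y_2,x_1\}$, are represented by the four closed arcs into which they split $\partial D$. The two diagonal pairs $\{x_1,y_1\}$ and $\{x_2,y_2\}$ are represented by $\alpha$ and $\beta$ respectively. Finally, the four edges from $v$ to the boundary points are represented by arcs lying in the exterior $\mathbb{R}^2 \setminus \mathrm{int}(D)$, chosen to be pairwise internally disjoint; such a quadruple exists because the exterior is path-connected, for instance by first drawing a simple closed curve in the exterior encircling $D$ and then running each arc from $v$ along this curve to its target.

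It then only remains to verify that the resulting ten arcs are pairwise internally disjoint, after which the drawing is a planar embedding of $K_5$ and we obtain the desired contradiction. The four boundary arcs meet pairwise only at $x_1,x_2,y_1,y_2$. The diagonals $\alpha$ and $\beta$ have their interiors in $\mathrm{int}(D)$ and are disjoint from each other by hypothesis, hence are internally disjoint from the boundary arcs and from each other. The four exterior arcs lie in $\mathbb{R}^2 \setminus \mathrm{int}(D)$ and so are automatically internally disjoint from $\alpha$ and $\beta$ and from the interiors of the boundary arcs. The hard part, then, is arranging the four exterior arcs to be internally disjoint from each other, but this is a routine matter in the connected exterior region and is the only place in the argument where real care is required.
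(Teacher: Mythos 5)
Your proposal is correct and matches the paper's intended argument: the paper gives no proof of this lemma beyond remarking that it follows directly from the non-planarity of $K_5$, and your construction is precisely that reduction. The only loose end is the unjustified assertion that $\alpha$ and $\beta$ have their interiors in $\mathrm{int}(D)$ --- an arc in the closed disc may partially run along the boundary --- but this is repaired by routing the four ``boundary'' edges of your $K_5$ through the exterior of $D$ rather than along $\partial D$ itself, so the argument stands.
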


\begin{proof}[Proof of \cref{theo:counterexample_to_bidirected_menger}]
    The vertices of $B$ will be given by vertices $x_{i,j}$ indexed by pairs $(i,j)$ of natural numbers with $1 \leq i + j \leq 2k + 1$.
    We add edges of three kinds.
    For any natural numbers $i$ and $j$ with $1 \leq i$ and $i + j \leq 2k$ we add an edge that is incident to $x_{i,j}$ with sign~$-$ and incident to $x_{i,j+1}$ with sign~$+$.
    Similarly for any natural numbers $i$ and $j$ with $1 \leq j$ and $i + j \leq 2k$ we add an edge that is incident to $x_{i,j}$ with sign~$-$ and incident to $x_{i+1,j}$ with sign~$+$.
    Finally, for any $i \leq 2k$ we add an edge incident to $x_{i,2k+1-i}$ and $x_{i+1, 2k-i}$, with sign~$-$ at both vertices.
    We call the edges of these three kinds {\em vertical}, {\em horizontal} and {\em diagonal edges} respectively -- see \cref{fig:counterexample_vertex_menger} for an illustration where $k=2$.
    We set $\X := \{x_{0,j} \colon 1 \leq j \leq 2k+1\}$ and $\Y := \{x_{i,0} \colon 1 \leq i \leq 2k+1\}$. 
    
    We show that for any set $S \subseteq V(B)$ of size $k$ there is an $\X$--$\Y$~path in $B - S$.
    First we construct a sequence of $\X$--$\Y$ paths $P_1, P_2, \ldots, P_{2k+1}$ by taking the path $P_i$ to be the concatenation of the path from $x_{0,i}$ to $x_{2k+1-i, i}$ consisting of horizontal edges, followed by the diagonal edge from $x_{2k+1-i, i}$ to $x_{2k+2-i,i-1}$ and then the path consisting of vertical edges to $x_{2k+2-i, 0}$.
    Note that no vertex appears in more than two of these paths, so the number of these paths meeting $S$ is at most $2k$.
    Since there are $2k+1$ of these paths, one of them avoids~$S$.
    
    Now suppose for a contradiction that there are two disjoint $\X$--$\Y$ paths $P$ and $Q$.
    Let the initial vertices of $P$ and $Q$ be $x_P$ and $x_Q$ and let their final vertices be $y_P$ and $y_Q$.
    Since deleting all the diagonal edges from $B$ leaves a directed graph in which no directed edge points into $\X$ or $\Y$, both $P$ and $Q$ must contain diagonal edges.
    Let $z_P$ and $z_Q$ be endvertices of those diagonal edges on $P$ and $Q$ respectively which are distinct from $x_P$, $x_Q$, $y_P$ and $y_Q$.
    We may embed $B$ in the disc in such a way that on the boundary we have, in clockwise order, first the elements of $\X$, then those of $\Y$, then all other $x_{i,j}$ with $i+j=2k+1$.
    Without loss of generality we may assume that in the clockwise order after $\Y$ on the boundary of the disc we first reach $z_Q$ then $z_P$.
    Then disjointness of the arcs induced by the paths $z_PPy_P$ and $x_QQz_Q$ contradicts \cref{lem:topology}, completing the proof.
\end{proof}

We will show in \cref{sec:vertex-disjoint} that the vertex-version of Menger's Theorem in the context of bidirected graphs follows from the edge-version, meaning that the counterexample above also implies the existence of a counterexample to the edge-version of Menger's Theorem in our context.
It is also not difficult to construct counterexamples to the vertex-version directly with a slight modification of the construction above, as illustrated in \cref{fig:counterexample_edge_menger}.

\begin{theorem}\label{thm:counterexample_to_edge_menger}
    For each number~$k \in \N$, there exists a bidirected graph~$B$ and distinct vertices ~$x$ and~$y$, such that there are no two edge-disjoint $x$--$y$~paths and for each subset $S \subseteq E(B)$ of size $k$ there exists an $x$--$y$~path in $B - S$.
    \hfill$\square$
\end{theorem}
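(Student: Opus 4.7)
I would build $B$ by a small surgery on the bidirected grid $B_0$ from the proof of \cref{theo:counterexample_to_bidirected_menger}, reusing its sets $\X$, $\Y$ and the $2k+1$ witness paths $P_1, \dots, P_{2k+1}$.

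The surgery has two stages. First, \emph{vertex-split} every $v \in V(B_0) \setminus (V(\X) \cup V(\Y))$: replace $v$ by two new vertices $v^+, v^-$ joined by a \emph{bridge} edge $b_v$ with sign $-$ at $v^+$ and sign $+$ at $v^-$, and reattach every half-edge of $B_0$ at $v$ of sign $\alpha$ to $v^\alpha$, keeping sign $\alpha$. Since at $v^+$ every edge other than $b_v$ has sign $+$ (and symmetrically at $v^-$), any walk that visits the pair $\{v^+, v^-\}$ is forced to traverse $b_v$. Second, identify all vertices of $V(\X)$ into a single vertex $x$ and all vertices of $V(\Y)$ into a single vertex $y$. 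All half-edges incident to an $x_{0,j}$ in $B_0$ carry sign $-$, and similarly for each $x_{i,0}$; so every half-edge at $x$ (respectively at $y$) in the resulting graph $B$ has the same sign, making $x$ and $y$ only able to occur as endpoints of walks. Since each $x_{0,j}$ and each $x_{i,0}$ was incident to a single edge of $B_0$, the identification produces no parallel edges forbidden by \cref{sec:Preliminaries}.

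Both required properties of $B$ now reduce directly to the corresponding properties of $B_0$. Suppose for contradiction that $B$ admits two edge-disjoint $x$--$y$ paths. They must use distinct edges at $x$ and distinct edges at $y$, so the corresponding walks in $B_0$ start at different vertices of $V(\X)$ and end at different vertices of $V(\Y)$; and any internal vertex $v$ of $B_0$ shared by the two walks would force both lifts to traverse $b_v$, contradicting edge-disjointness. Hence the two walks are internally vertex-disjoint $\X$--$\Y$ paths in $B_0$, contrary to \cref{theo:counterexample_to_bidirected_menger}. Conversely, lift each $P_i$ to an $x$--$y$ path $P_i'$ in $B$ by inserting the bridge traversal $v^+\, b_v\, v^-$ (or its inverse) at each internal vertex of $P_i$. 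Each internal vertex of $B_0$ lies on at most two of the $P_i$, so each bridge edge lies on at most two of the $P_i'$; the horizontal, vertical, and diagonal edges of $B_0$ each lie on at most one $P_i$ (different rows, columns, diagonals), and each source/sink edge at $x$ or $y$ is used by a unique $P_i'$. Therefore any $S \subseteq E(B)$ of size $k$ meets at most $2k$ of the $2k+1$ paths $P_i'$, so at least one survives in $B - S$.

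\textbf{Main obstacle.} The only real work is the sign bookkeeping in the vertex-split: one has to check that the chosen bridge signs make it the unique way through a split pair, and that lifting a $P_i$ to $P_i'$ produces a legitimate walk (which it does because the entry and exit signs at $v$ in $P_i$ were opposite, matching the distribution of edges between $v^+$ and $v^-$). Once this is confirmed, the topological counterexample for the vertex-version transfers verbatim to an edge-version counterexample.
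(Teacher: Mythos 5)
Your construction is correct and is essentially the paper's intended argument: the paper leaves this theorem unproved beyond the remark that it follows from \cref{theo:counterexample_to_bidirected_menger} via the vertex-to-edge reduction of \cref{sec:vertex-disjoint}, and your vertex-splitting surgery is exactly that reduction (the $\hat{B}$ construction) applied to the grid, matching the graph depicted in \cref{fig:counterexample_edge_menger}. The only cosmetic difference is that you identify the degree-one vertices of $V(\X)$ and $V(\Y)$ into $x$ and $y$ rather than attaching new apex vertices, which changes nothing.
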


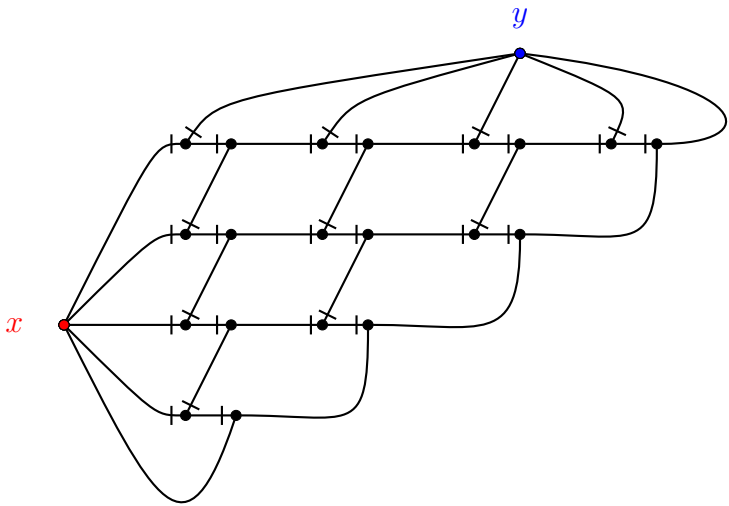
\begin{figure}[ht]
    \centering
	\begin{tikzpicture}[scale=.8]
	\node[dot, fill = red] (a0) at (-0.5,3) {};
	\node[dot, fill = red] (b0) at (-0.5,3) {};
	\node[dot] (b2) at (1.5,1.5) {};
	\node[dot] (b3) at (2.33,1.5) {};
	\node[dot, fill = red] (c0) at (-0.5,3) {};
	\node[dot] (c2) at (1.5,3) {};
	\node[dot] (c3) at (2.25,3) {};
	\node[dot] (c5) at (3.75,3) {};
	\node[dot] (c6) at (4.5,3) {};
	\node[dot, fill = red] (d0) at (-0.5,3) {};
	\node[dot] (d2) at (1.5,4.5) {};
	\node[dot] (d3) at (2.25,4.5) {};
	\node[dot] (d5) at (3.75,4.5) {};
	\node[dot] (d6) at (4.5,4.5) {};
	\node[dot] (d8) at (6.25,4.5) {};
	\node[dot] (d9) at (7,4.5) {};
	\node[dot, fill = red] (e0) at (-0.5,3) {};
	\node[dot] (e2) at (1.5,6) {};
        \node[dot] (e3) at (2.25,6) {};
        \node[dot] (e5) at (3.75,6) {};
        \node[dot] (e6) at (4.5,6) {};
        \node[dot] (e8) at (6.25,6) {};
        \node[dot] (e9) at (7,6) {};
	\node[dot] (e11) at (8.5,6) {};
	\node[dot] (e12) at (9.25,6) {};
	\node[dot, fill = blue] (v2) at (7,7.5) {};
	\node[dot, fill = blue] (v5) at (7,7.5) {};
	\node[dot, fill = blue] (v8) at (7,7.5) {};
	\node[dot, fill = blue] (v11) at (7,7.5) {};
	\node[dot, fill = blue] (v13) at (7,7.5) {};

    \draw[-e+, thick] (b0) ..controls (1,1.5) .. (b2);
    \draw[-e+, thick] (c0) ..controls (1,3) .. (c2);
    \draw[-e+, thick] (d0) ..controls (1,4.5) .. (d2);
    \draw[-e+, thick] (e0) ..controls (1,6) .. (e2);

	\foreach \x in {b,c,d,e} 
	 {\draw[-e+, thick] (\x2) to (\x3);}
\foreach \x in {c,d,e} 
{\draw[-e+2, thick] (\x5) to (\x6);}
\foreach \x in {d,e} 
{\draw[-e+2, thick] (\x8) to (\x9);}
\draw[-e+2, thick] (e11) to (e12);

\draw[-e+, thick] (v2) ..controls(2,6.75) .. (e2);
\draw[-e+, thick] (v5) ..controls(4.25,6.75) .. (e5);
\draw[-e+, thick] (v8) to (e8);
\draw[-e+, thick] (v11) ..controls(8.85,6.75) .. (e11);

\foreach \x/\y in {b3/c6,c6/d9,d9/e12} 	
\draw [thick] (\x) ..controls+(right:1.9cm) and +(down:1.9cm)  .. (\y);
\draw [thick] (a0) ..controls (1, 0) and (1.5,-1.0)  .. (b3);
\draw [thick] (e12) ..controls(11, 6) and (11, 7.0)  .. (v13);
\foreach \x/\y in {2/3,5/6,8/9}
\draw[-e+2, thick] (e\y) to (d\x);
\foreach \x/\y in {2/3,5/6}
\draw[-e+2, thick] (d\y) to (c\x); 	
\draw[-e+2, thick] (c3) to (b2); 	
\foreach \x in {c,d,e}
\draw[-e+2, thick] (\x3) to (\x5); 
\foreach \x in {d,e}
\draw[-e+2, thick] (\x6) to (\x8); 
\draw[-e+2, thick] (e9) to (e11); 
\node[left, red] at (-1,3) {\large$x$};
\node[above, blue] at (7,7.75) {\large$y$};
\end{tikzpicture}
\caption{A bidirected graph containing neither two edge-disjoint \textcolor{red}{$x$}--\textcolor{blue}{$y$} paths nor an edge set $W$ of size $\leq 2$ such that in $B - W$ there is no \textcolor{red}{$x$}--\textcolor{blue}{$y$} path.}
    \label{fig:counterexample_edge_menger}
\end{figure}

\section{Appendages of paths}\label{sec:appendages}

In this section, we introduce the main tool for the proof of the edge-version of Menger's Theorem, \cref{theo:edge_menger}. Given an arbitrary path $P$ of a bidirected graph $B$ starting in a vertex $x$, we define a set $A(P, x)$ of edges that includes much of the structure of $B$ in the vicinity of $P$:

\begin{definition} \label{def:appendage}
	An edge set $A \subseteq E(B)$ is {\em $(P,x)$-admissible} if for any $\ve \in \vAv$ there is an $\ve$--$x$ trail in $A \cup E(P)$.
	The {\em appendage} $A(P,x)$ of $P$ and $x$ is the union of all ${(P,x)}$-admissible sets.
\end{definition}

\begin{remark}
	Any union of $(P,x)$-admissible sets is again $(P,x)$-admissible.
	Thus, $A(P,x)$ is the maximal $(P,x)$-admissible set.
\end{remark}

In the vertex-version of Menger's Theorem, \cref{theo:vertex_menger}, we restrict to bidirected graphs~$B$ and sets~$\X$ of signed vertices such that~$B$ contains no nontrivial path that starts and ends in~$\X$.
However, for an edge-version of Menger's Theorem, forbidding nontrivial $x$--$x$~paths is not sufficient: 
joining the counterexample of \cref{thm:counterexample_to_edge_menger} with a bidirected graph akin to the one depicted in~\cref{fig:counterexample_extension} by identifying the graphs at the respective vertices called $x$ gives rise to a bidirected graph with no $x'$--$x'$ path.
But there are neither two disjoint $x'$--$y$ paths nor is there a small set $S$ of edges such that $B - S$ contains no~$x'$--$y$~path.

\begin{figure}[ht]
\begin{tikzpicture}
        \node[dot] (x') at (-2,0) [label=left:$x'$] {};
        \node[dot] (x) at (2,0) [label=right:$x$] {};
        \node[dot] (a1) at (0,1.2) {};
        \node[dot] (a2) at (0,.4) {};
        \node[dot] (a3) at (0,-.4) {};
        \node[dot] (a4) at (0,-1.2) {};
        \draw[-e+7,thick] (a1) ..controls+(1.6,0) .. (x) ;	
        \draw[-e+7,thick] (a2) ..controls+(1.1,0) .. (x) ;		
        \draw[-e+7,thick] (a3) ..controls+(1.1,0) .. (x) ;			
        \draw[-e+7,thick] (a4) ..controls+(1.6,0) .. (x) ;		
        \draw[-e+7,thick] (x') ..controls+(.7,1.2) .. (a1) ;	
        \draw[-e+7,thick] (x') ..controls+(.7,.4) .. (a2) ;
        \draw[-e+7,thick] (x') ..controls+(.7,-.4) .. (a3) ;	
        \draw[-e+7,thick] (x') ..controls+(.7,-1.2) .. (a4) ;			
    \end{tikzpicture}
    \caption{A bidirected graph with no $x'$--$x'$ path and the property that for any edge set $W$ of size at most $3$ there exists an $x'$-$x$ path in $B - W$.}
    \label{fig:counterexample_extension}
\end{figure}
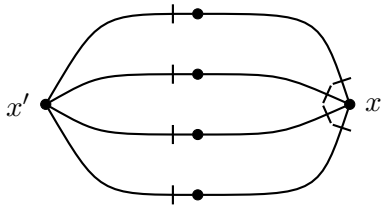

\noindent For the edge-version of Menger's Theorem, we thus forbid not only nontrivial $x$--$x$ paths, but even nontrivial $x$--$x$ trails:
\begin{definition}
	A vertex~$x$ of a bidirected graph~$B$ is \emph{edge-clean} if there exists no nontrivial $x$--$x$~trail in~$B$.
\end{definition}

From now on we assume $x$ to be edge-clean.
Our aim is to show three essential properties of the appendage $A(P, x)$ that makes it a key tool in our proof of~\cref{theo:edge_menger}, the edge-version of Menger's Theorem in bidirected graphs.
First, $A(P, x)$ is edge-disjoint from all paths starting in $x$ that are edge-disjoint from $P$.
Second, for any $v \in V(A(P, v)) \cup V(P)$ and any $\alpha \in \{ +, - \}$ if there is an $x$--$(v,\alpha)$ path in $B$ then there also is one in $A(P, x) \cup E(P)$ that coincides with $P$ in the first edge.
Third, any $x$--$y$ walk contains an $x$--$y$ path if it can be partitioned into a path in $A(P, x) \cup E(P)$ and a path in $E(B) \setminus (A(P,x) \cup E(P))$.
 
In the proof of~\cref{theo:edge_menger}, we replace the set $A(P,x) \cup E(P)$ by some auxiliary edges for a fixed path $P$. In this construction we do not want to remove paths starting in $x$ that are edge-disjoint to $P$. Therefore it is essential that all paths starting in $x$ which are disjoint to $E(P)$ are also disjoint to $A(P,x)$:

\begin{lemma}\label{lem:disjoint}
	Let $x$ be edge-clean and let $Q$ be a path in $B$ starting in $x$ that is edge-disjoint from $P$.
	Then $E(Q) \cap A(P,x) = \emptyset$.
\end{lemma}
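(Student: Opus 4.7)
I plan to argue by contradiction: suppose some edge of $Q$ lies in $A(P,x)$, and construct from this a nontrivial $x$--$x$ trail in $B$, contradicting the edge-cleanness of $x$. Writing $Q = x \vf_1 v_1 \vf_2 \dots \vf_m v_m$, let $i \in [m]$ be the least index with $f_i \in A(P,x)$. By minimality of $i$ none of $f_1, \dots, f_{i-1}$ lies in $A(P,x)$, and by edge-disjointness of $Q$ and $P$ none of them lies in $E(P)$ either; hence $E(Q v_{i-1}) \cap (A(P,x) \cup E(P)) = \emptyset$.

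Next, since $f_i \in A(P,x)$, the $(P,x)$-admissibility of $A(P,x)$ applied to $\vf_i$ supplies a $\vf_i$--$x$ trail $T$ in $A(P,x) \cup E(P)$. Let $T'$ be the suffix of $T$ starting at $v_i$, that is, $T$ with its initial edge $\vf_i$ deleted; since $Q$ is a path starting at $x$ we have $v_i \neq x$, so $T'$ is a genuine (nontrivial) $v_i$--$x$ trail. The candidate trail is then
\[
    W := (Q v_{i-1}) \, \vf_i \, T',
\]
following $Q$ from $x$ through $\vf_i$ to $v_i$ and then continuing along $T'$ back to $x$.

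It remains to verify that $W$ really is a nontrivial $x$--$x$ trail. The walk condition at the interior vertices of $Q v_{i-1}$ is inherited from $Q$, those inside $T'$ are inherited from $T$, and at the junction $v_i$ it holds because $T$ is a walk, so the sign at $v_i$ of $\vf_i$ differs from the sign at $v_i$ of the first edge of $T'$. For the trail condition, the edges of $Q v_{i-1}$ lie outside $A(P,x) \cup E(P)$ while those of $T'$ lie inside it, so these two segments are edge-disjoint; and the bridging edge $f_i$ occurs exactly once in the trail $T$ (as the initial $\vf_i$) and hence not in $T'$, while it is absent from $Q v_{i-1}$ by the choice of $i$, so $f_i$ is used precisely once in $W$.

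The only delicate point here is the last edge-disjointness argument, and it is exactly what forces the minimality choice of $i$: without it, some edge of $A(P,x) \cup E(P)$ appearing on $Q$ before $f_i$ could reappear inside $T'$ and destroy the trail property, leaving only a walk from which no contradiction with edge-cleanness can be extracted directly.
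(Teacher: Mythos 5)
Your proof is correct and follows essentially the same route as the paper: take the first edge $\vf_i$ of $Q$ lying in $A(P,x)$, use $(P,x)$-admissibility to obtain an $\vf_i$--$x$ trail $T$ in $A(P,x)\cup E(P)$, and concatenate $Q\vf_i$ with $T$ to get a nontrivial $x$--$x$ trail contradicting edge-cleanness. The paper states this concatenation (written $Q\ve R$ in its notation) without elaboration, whereas you spell out the sign condition at the junction and the edge-disjointness of the two segments; these checks are exactly the ones implicitly needed.
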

\begin{proof}
	Suppose not for a contradiction, and let $\ve$ be the first edge of $Q$ in $A(P,x)$.
	Since $A(P,x)$ is $(P,x)$-admissible, there is an $\ve$--$x$-trail $R$ in $A(P,x) \cup E(P)$.
	But then $Q \ve R$
	is a nontrivial $x$--$x$~trail, contradicting the edge-cleanness of $x$.
\end{proof}

\begin{corollary} \label{cor:appendage_single_edge}
    Let $x$ be edge-clean.
    Then $A(P, x) \cup E(P)$ contains exactly one edge incident to $x$.
\end{corollary}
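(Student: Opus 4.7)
The plan is to reduce to \cref{lem:disjoint}. Since $P$ is a path starting in $x$, its first edge $e_P$ is incident to $x$, and because $P$ visits $x$ only once, $e_P$ is the unique edge of $E(P)$ at $x$. So $E(P)$ already contributes exactly one edge incident to $x$, and it suffices to show that no further edge incident to $x$ lies in $A(P,x) \setminus E(P)$.

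Suppose for a contradiction that there is an edge $f \in A(P,x)$ with $f$ incident to $x$ and $f \neq e_P$. Let $v$ denote the other endvertex of $f$ (which exists and differs from $x$ since $B$ has no loops), and let $\vec{f}$ be the orientation of $f$ with startvertex $x$ and endvertex $v$. Then the length-one walk $Q := x \, \vec{f} \, v$ trivially satisfies the sign condition in the definition of a walk (it has no internal vertices), and since its two vertices are distinct, $Q$ is a path in $B$ starting in $x$. Because $e_P$ is the only edge of $P$ at $x$ and $f \neq e_P$, the edge $f$ does not lie on $P$, so $Q$ is edge-disjoint from $P$.

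Applying \cref{lem:disjoint} to the edge-clean vertex $x$ and the path $Q$ now yields $E(Q) \cap A(P,x) = \emptyset$, i.e.\ $f \notin A(P,x)$, contradicting our choice of $f$. Hence no edge incident to $x$ other than $e_P$ belongs to $A(P,x) \cup E(P)$, and the corollary follows.

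The argument is almost entirely bookkeeping; the only point that requires a moment of care is verifying that a single oriented edge at $x$ really does form a path, so that \cref{lem:disjoint} is applicable. No further obstacle is expected.
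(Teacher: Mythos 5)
Your proof is correct and follows essentially the same route as the paper: both reduce the statement to \cref{lem:disjoint} by observing that a second edge of $A(P,x)\cup E(P)$ at $x$ would yield a single-edge path starting in $x$ that is edge-disjoint from $P$ yet meets $A(P,x)$. You are merely more explicit about the bookkeeping (that $e_P$ is the unique edge of $E(P)$ at $x$ and that a single oriented edge is indeed a path), which the paper leaves implicit.
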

\begin{proof}
    Suppose not and let $e \in A(P, x) \setminus E(P)$ be incident to $x$.
    Then the path consisting of the single edge $e$ contradicts \cref{lem:disjoint}. 
\end{proof}
Now we turn our attention to the second essential property of appendages, which provides the existence of specific paths in $A(P, x) \cup E(P)$. We use this property in the proof of the edge-version of Menger's Theorem to ensure that certain paths are contained in $A(P, x) \cup E(P)$ by redirecting them if necessary. 
\begin{lemma}\label{lem:goodpath}
	Let $x$ be edge-clean and let $(v, \alpha)$ be a signed vertex with $v \in V(A(P,x)) \cup V(P)$.
	Suppose that there is an $x$--$(v,\alpha)$~path in $B$.
	Then there is such a path in $A(P,x) \cup E(P)$.
\end{lemma}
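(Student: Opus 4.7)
My plan is to reduce the conclusion to an admissibility statement: I claim that the edge set $A(P,x) \cup (E(R) \setminus E(P))$ is itself $(P,x)$-admissible for any $x$--$(v,\alpha)$ path $R = x \ve_1 v_1 \cdots \ve_k v$ in $B$. By the maximality of $A(P,x)$ noted in the remark after \cref{def:appendage}, this forces $E(R) \setminus E(P) \subseteq A(P,x)$, hence $E(R) \subseteq A(P,x) \cup E(P)$, and so $R$ itself is the desired path.

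For the admissibility check, orientations of edges already in $A(P,x)$ are handled by the original admissibility. Thus the new content is supplying $x$--trails in $A(P,x) \cup E(R) \cup E(P)$ for the two orientations of each edge $e_j$ appearing on $R$ that lies outside $A(P,x) \cup E(P)$. The backward orientation $\ev_j$ is immediate: the sequence $v_j \ev_j v_{j-1} \ev_{j-1} \cdots \ev_1 x$, obtained by prepending $\ev_j$ to the reverse of $R$'s prefix $Rv_{j-1}$, is a trail since $R$ is a path.

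For the forward orientation $\vev_j$ the intended construction concatenates the suffix $\vev_j v_j \cdots v$ of $R$ with a $v$--$x$ trail $T_v$ in $A(P,x) \cup E(P)$ that leaves $v$ with sign $-\alpha$. The hypothesis $v \in V(A(P,x)) \cup V(P)$ together with admissibility of $A(P,x)$ guarantees that $T_v$ exists with the desired sign: admissibility forces every vertex of $V(A(P,x))$ to have edges of both signs represented in $A(P,x) \cup E(P)$, so one may take the $\ve$--$x$ trail of an edge $e \in A(P,x)$ at $v$ with $\sigma(v,e) = -\alpha$; and if $v \in V(P)$ one may take (a continuation of) $\inv{Pv}$. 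In the fortunate case where no such $T_v$ can be found — which occurs only when the $x$--side edge of $P$ at $v$ already has sign $\alpha$ — the subpath $Pv$ is itself an $x$--$(v,\alpha)$ path in $E(P)$, and the lemma follows directly without invoking admissibility.

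The principal obstacle is that the concatenation of $R$'s suffix with $T_v$ is only guaranteed to be a walk, not a trail, since the two pieces can share edges lying in $A(P,x) \cup E(P)$. I expect this to be resolved by a splicing argument: truncate $R$'s suffix at the first vertex $w$ at which it meets $T_v$ and complete along the appropriate suffix of $T_v$ from $w$, verifying sign-compatibility at $w$, or — failing that — invoke the edge-cleanness of $x$ to rule out problematic repetitions, since any such repetition would splice into an $x$--$x$ trail contradicting the edge-cleanness assumption. Once edge-disjointness is secured, the $\vev_j$--$x$ trail exists, admissibility of $A(P,x) \cup (E(R) \setminus E(P))$ follows, and with it the lemma.
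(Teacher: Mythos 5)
Your reduction rests on the claim that for \emph{every} $x$--$(v,\alpha)$ path $R$ the set $A(P,x)\cup(E(R)\setminus E(P))$ is $(P,x)$-admissible whenever a $(v,-\alpha)$--$x$ trail $T_v$ exists in $A(P,x)\cup E(P)$, so that \emph{every} such path would lie in $A(P,x)\cup E(P)$. That claim is false, and it fails precisely at the obstacle you flagged: the walk obtained by following $R$ forward and then $T_v$ back need not contain a trail, and neither of your proposed repairs is available. Concretely, take vertices $x,u_1,u_2,u_3,v,m$ and edges $e_1=xu_1$, $e_2=u_1u_2$, $e_3=u_2u_3$, $e_4=u_3v$, each with sign $-$ at the first-named endvertex and $+$ at the second, together with $g=u_3v$ having sign $-$ at both ends and $f_1=u_1m$, $f_2=mu_2$ again with sign $-$ at the first-named endvertex and $+$ at the second. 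The only edge at $x$ is $e_1$, so $x$ is edge-clean. Let $P:=x\ve_1u_1\ve_2u_2\ve_3u_3\ve_4v$ and $\alpha:=-$. One checks that $g\in A(P,x)$ (for instance $u_3\vg v\ev_4u_3\ev_3u_2\ev_2u_1\ev_1x$ is a $\vg$--$x$ trail), so $x\ve_1u_1\ve_2u_2\ve_3u_3\vg v$ is an $x$--$(v,-)$ path inside $A(P,x)\cup E(P)$, and $T_v:=v\ev_4u_3\ev_3u_2\ev_2u_1\ev_1x$ is a $(v,+)$--$x$ trail in $A(P,x)\cup E(P)$, so your escape hatch does not fire. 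Now $R:=x\ve_1u_1\vf_1m\vf_2u_2\ve_3u_3\vg v$ is also an $x$--$(v,-)$ path, yet there is \emph{no} $\vf_1$--$x$ trail anywhere in $B$: after $u_1\vf_1 m\vf_2 u_2$ the only edge with sign $-$ at $u_2$ is $e_3$, and from $u_3$ any continuation through $e_4$ and $g$ returns to $u_3$ needing to leave with sign $+$, where the only such edge, $e_3$, is already used. Hence $f_1\notin A(P,x)$ and $A(P,x)\cup(E(R)\setminus E(P))$ is not admissible. Your splicing fallback also fails here: the suffix of $R$ from $\vf_1$ first returns to $V(T_v)$ at $u_2$, arriving with sign $+$, while the suffix of $T_v$ from $u_2$ also leaves $u_2$ with sign $+$; and no $x$--$x$ trail arises from this clash, since $x$ genuinely is edge-clean in this graph.

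The structural point you are missing is that a path may rejoin $P$ (here at $u_2$) \emph{with the same sign} with which $P$ arrives there; the edges of such a detour simply cannot be absorbed into the appendage, so no admissibility argument can succeed for an arbitrary $R$. The paper therefore does not attempt your stronger statement: it chooses $Q$ to minimise the number of edges outside $A(P,x)\cup E(P)$ and, in exactly this same-sign case (its Case~1.1), \emph{reroutes} to $PwQ$ to contradict minimality, reserving the admissibility argument for the opposite-sign case and for returns to $V(A(P,x))$ (where the trail it appends is built from the appendage at the first return vertex $w$, not from a trail $T_v$ at the far endpoint). Your handling of the backward orientations and of the terminal vertex $v$ is fine, but without the minimal choice of $R$ and the rerouting step the argument cannot be completed.
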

\begin{proof}
	Let $Q$ be an $x$--$(v,\alpha)$~path in $B$ chosen to minimise its set of edges outside $A(P,x) \cup E(P)$.
	We show that $Q$ is contained in $A(P,x) \cup E(P)$.
	Suppose not for a contradiction, and let $\ve$ be the first edge of $Q$ outside $A(P,x) \cup E(P)$. 
	\begin{description}
		\item[Case 1: Some vertex of $Q$ after $\ve$ lies on $P$]
		    In this case, let $w$ be the first such vertex along $P$.
		    Note that $w \neq x$ since $Q$ is a path.
		    Let $p$ be the edge preceding $w$ on $P$ and let~$q$ be the edge preceding $w$ on $Q$. 
    		\begin{description}
    			\item[Case 1.1: $\sigma(p,w) = \sigma(q,w)$]
    			    The path $P w Q$ is an $x$--$(v,\alpha)$~path using fewer edges outside $A(P,x) \cup E(P)$ than $Q$, since $P w$ is disjoint from $w Q$ per choice of $w$ and~$e \notin E(P w Q)$.
    			    This contradicts the minimality of $Q$.
    			    
    			\item[Case 1.2: $\sigma(p,w) = -\sigma(q,w)$]
        			We will show that the set $A(P,x) \cup E(Q w)$ is $(P,x)$-admissible.
        			That is, for any oriented edge $\vf$ with $f \in A(P,x) \cup E(Q w)$ there is an $\vf$--$x$~trail in $A(P,x) \cup E(Q w) \cup E(P)$.
        			If $f \in A(P,x)$ this is clear, so suppose not.
        			Thus, $f$ is an edge of $\ve Q w$.
        			If $\vf \in \vE(Q)$ then $\vf Q w \inv{P}$ is a suitable trail, by choice of~$w$.
                    If $\fv \in \vE(Q)$ then $\vf\,\inv{Q}$ is a suitable trail.
        			Thus, $A(P, x) \cup E(Q w)$ is $(P, x)$-admissible, and so it is a subset of $A(P, x)$.
        			This contradicts the assumption that $e$ is not contained in $A(P, x)$.
    		\end{description}
    		
		\item[Case 2: No vertex of $Q$ after $\ve$ lies on $P$]
    		In this case, let $w$ be the first vertex along $Q$ after $\ve$ in $V(A(P,x)) \cup V(P)$. There is such a vertex since $v$ is a candidate.
    		Let $q$ be the edge preceding $w$ on $Q$.
    		Our first aim is to show that there is $\va \in \vecev{A(P,x)}$ with startvertex $w$ and such that $\sigma(a,w) = -\sigma(q,w)$.
    		Indeed, as $w \in V(A(P,v))$ there is $\vb \in \vecev{A(P,x)}$ so that its endvertex is $w$.
    		If $\sigma(b,w) = -\sigma(q,w)$ then we can just set~$\va := \bv$.
    		Otherwise, we can take $\va$ as the second edge of any $\vb$--$x$ trail in~$A(P,x) \cup E(P)$.
    		Let $R$ be any $\va$--$x$ trail in~$A(P,x) \cup E(P)$.
    		
    		We will now show that $A(P,x) \cup E(Q w)$ is $(P,x)$-admissible, that is, for any oriented edge $\vf$ with $f \in A(P,x) \cup E(Q w)$, there is an $\vf$--$x$~trail in $A(P,x) \cup E(Q w) \cup E(P)$.
    		If~$f \in A(P,x)$ this is clear, so suppose not.
    		Thus, $f$ is an edge of $\ve Q w$. If~$\vf \in \vE(Q)$, then we find a suitable trail by following $Q$ from $\vf$ to $w$ and switching onto $\va R$. If~$\fv \in \vE(Q)$, then $\vf\,\inv{Q}$ is a suitable trail.
    		Thus, $A(P,x) \cup E(Q w)$ is $(P,x)$-admissible, and so it is a subset of $A(P,x)$.
    		This contradicts our assumption that $e$ is not contained in $A(P, x)$.
	\end{description}
\end{proof}

In the proof of~\cref{theo:edge_menger}, we will find some of the desired $x$--$y$ paths in certain \hbox{$x$--$y$~walks}.
As mentioned in the introduction, an~$x$--$y$~walk in a bidirected graph does not necessarily contain an~$x$--$y$~path.
We here provide a sufficient condition for the existence of an $x$--$y$ path in an~$x$--$y$ walk:
\begin{lemma}\label{lem:concat_paths}
    Let $y$ and $(z, \alpha)$ be (signed) vertices of $B$.
    Let $Q$ be an $x$--$(z, \alpha)$~path and let $R$ be a $(z, -\alpha)$--$y$~path.
    If all edges of $Q$ except possibly the last edge are contained in $E(P) \cup A(P,x)$ and $R$ avoids $E(P) \cup A(P,x)$, then there is an $x$--$y$~path $S$ in $E(Q)\cup E(R)$.
    Furthermore, if $R$ avoids $x$, then the first edge of $S$ and the first edge of $Q$ coincide.
\end{lemma}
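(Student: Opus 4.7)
The plan is to parameterise $Q = x_0 \vec{q}_1 x_1 \cdots \vec{q}_m x_m$ with $x_0 = x$, $x_m = z$, and $R = y_0 \vec{r}_1 y_1 \cdots \vec{r}_n y_n$ with $y_0 = z$, $y_n = y$, and then split into cases according to how $Q$ and $R$ meet. A preliminary observation is $E(Q) \cap E(R) = \emptyset$: the hypothesis places every $q_k$ with $k < m$ into $E(P) \cup A(P,x)$ while keeping every $r_\ell$ outside it, and $q_m = r_1$ is ruled out by the sign clash $\sigma(q_m, z) = \alpha \neq -\alpha = \sigma(r_1, z)$; no other $r_\ell$ is incident to $z$.

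If $y \in V(Q)$, say $y = x_k$, I would take $S := Qx_k$, a subpath of $Q$. If instead $y \notin V(Q)$ and $V(Q) \cap V(R) = \{z\}$, then $S := Q \cdot R$ is a legal walk by the sign clash at $z$ and has no repeated vertices, so it is already a path. Otherwise let $v = y_i = x_j$ be a vertex in $V(Q) \cap V(R) \setminus \{z\}$ with $i$ as large as possible, so $1 \leq i \leq n-1$. If $v = x$, I would set $S := y_iR$, a subpath of $R$. In the remaining subcase $v \notin \{x, z\}$, the natural candidate is $S := Qx_j \cdot y_iR$; by the maximality of $i$ this has no vertex repetitions, and it is a legal walk if and only if $\sigma(q_j, v) = \sigma(r_i, v)$.

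The main obstacle will be proving this sign equality. I would argue by contradiction: suppose $\beta := \sigma(q_j, v) = -\sigma(r_i, v)$ and consider
\[
    A^* \;:=\; A(P,x) \cup \{r_1, r_2, \ldots, r_i, q_m\}.
\]
The goal is to show $A^*$ is $(P,x)$-admissible, whence by maximality $A^* \subseteq A(P,x)$, so $r_1 \in A(P,x)$, contradicting the hypothesis that $R$ avoids $A(P,x) \cup E(P)$. For each oriented edge in $A^* \setminus A(P,x)$ I would exhibit an explicit trail ending at $x$ inside $A^* \cup E(P)$. The forward orientations of the $r_k$ (and of $q_m$) are handled by following $R$ forward from $y_{k-1}$ (respectively going along $\vec{q}_m \vec{r}_1 \cdots \vec{r}_i$) until $v$ and then switching onto $\inv{Qx_j}$; the switch at $v$ is enabled by the assumed clash $-\beta \neq \beta$. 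The reverse orientations are handled by running backwards along $R$ to $z$ and then backwards along $Q$ to $x$, with the switch at $z$ using $\inv{\vec{q}_m}$ (via $\alpha \neq -\alpha$) -- this is precisely why $q_m$ must be included in $A^*$. Edge-disjointness of all these trails follows from $E(Q) \cap E(R) = \emptyset$ together with the path property of $Q$ and $R$.

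The ``Furthermore'' clause will be immediate: if $R$ avoids $x$, then both of the special subcases producing $v = x$ or $y = x$ are excluded, which forces $j, k \geq 1$; hence $q_1$ is always the first edge of $S$.
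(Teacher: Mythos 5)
Your proof is correct and follows essentially the same route as the paper's: splice a prefix of $Q$ onto a suffix of $R$ at a crossing vertex, and when the signs clash the wrong way, derive a contradiction by exhibiting explicit trails showing that $A(P,x)$ together with an initial segment of $R$ (and the last edge of $Q$) is $(P,x)$-admissible, which would force $r_1 \in A(P,x)$. The only differences are cosmetic: the paper splices at the first vertex of $Q$ lying on $R$ rather than at the last vertex of $R$ lying on $Q$, and packages your edge-by-edge trail list into the two trails $a \inv{R} z \inv{Q} x$ and $a Q z R a \inv{Q} x$.
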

\begin{proof}
    Let $a$ be the first vertex of $Q$ in $V(R)$.
    If $a = x$, then $a R$ is the desired path.
    If $a = z$, then $Q$ and $R$ intersect only in $a = z$ and thus $Q z R$ is the desired path. We can suppose that $a$ is an internal vertex of $Q$.
    Let $e$ be the edge of $Q$ preceding $a$ and $f$ be the edge of $R$ succeeding $a$.
    If $\sigma(a, e) = - \sigma(a, f)$, then $Q a R$ forms the desired path.

    Suppose for a contradiction that $\sigma(a, e) = \sigma(a, f)$ holds.
    Then $a \inv{R} z \inv{Q} x$ and $a Q z R a \inv{Q} x$ are trails.
    We show that $E(R a) \subseteq A(P, x)$, which contradicts the assumption that $R$ avoids $A(P, x)$.
    More precisely, we prove that $A(P, x) \cup E(a Q) \cup E(R a)$ is $(P,x)$-admissible: Let $\ve \in \vecev{E(a Q)} \cup \vecev{E(R a)}$.
    Then $\ve \in \vE(a \inv{R} z \inv{Q} x)$ or $\ve \in \vE(a Q z R a \inv{Q} x)$ holds.
    Note that both trails are contained in $A(P, x) \cup E(a Q) \cup E(R a)$, as $E(Q a) \subseteq A(P, x)$ holds since $a \neq z$.
    This completes the proof.
\end{proof}

\section{Menger's Theorem for edge-disjoint paths}\label{sec:edges-disjoint}

In this section we prove an edge-version of Menger's Theorem in bidirected graphs, from which we will then deduce our main result, the vertex-version given by~\cref{theo:vertex_menger}, in the subsequent~\cref{sec:vertex-disjoint}:
\begin{theorem}\label{theo:edge_menger}
    Let~$x$ and~$y$ be distinct vertices of a bidirected graph~$B$, and suppose that~$x$ is edge-clean.
    Let~$P_1, \dots, P_k$ be edge-disjoint~$x$--$y$~paths in~$B$ where~$\ve_i$ is the first edge of~$P_i$ for~$i \in [k]$.
    Then precisely one of the following is true:
	\begin{enumerate}[label=(\arabic*)]
		\item\label{item:edge_menger1} There is a set $S$ of $k$ edges of $B$ such that $B - S$ contains no $x$--$y$~path. \label{item:edge-sep}
		\item\label{item:edge_menger2} There are~$k+1$ edge-disjoint $x$--$y$~paths $P_1', \dots, P_{k+1}'$ such that the first edge of $P_i'$ is $\ve_i$ for $i \in [k]$. \label{item:edge-paths}
	\end{enumerate}
\end{theorem}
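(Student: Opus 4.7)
The plan is to adapt the classical inductive augmenting argument of Boehme, Goering, and Harant to bidirected graphs, using the appendage machinery of \cref{sec:appendages} to sidestep the failure of the walk--path correspondence.

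First, the two outcomes are mutually exclusive: if $P_1', \dots, P_{k+1}'$ are edge-disjoint $x$--$y$~paths and $S$ is any $k$-edge set, then by pigeonhole some $P_j'$ is disjoint from $S$, so $B - S$ still contains an $x$--$y$~path. It therefore suffices to show that whenever (1) fails, (2) holds, and I would proceed by induction on the number of ``free'' edges $|E(B) \setminus \bigcup_{i=1}^{k} E(P_i)|$. If there are none, every edge of $B$ lies on some $P_i$; since each $P_i$ is a path starting at $x$, the only edges of $B$ incident to $x$ are the first edges $e_1, \dots, e_k$, so $\{e_1, \dots, e_k\}$ is a $k$-edge separator --- contradicting the hypothesis that (1) fails. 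Hence there is an edge $f \in E(B) \setminus \bigcup_i E(P_i)$, and the $P_i$ remain edge-disjoint $x$--$y$~paths in $B - f$ with the same first edges, while $x$ is still edge-clean. Applying the induction hypothesis to $B - f$: if (2) holds there, we are done. Otherwise we obtain a $k$-edge separator $S$ of $B - f$; since $S$ does not separate $x$ from $y$ in $B$ (because (1) fails in $B$), there is an $x$--$y$~path $R$ in $B - S$, and it must traverse $f$. Moreover, each $P_i$ meets $S$ in exactly one edge.

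The core task is then to combine $R$ with $P_1, \dots, P_k$ into $k+1$ edge-disjoint $x$--$y$~paths starting with $\vec{e}_1, \dots, \vec{e}_k$ (and an arbitrary first edge for the $(k+1)$-st path). A naive rerouting along the crossings of $R$ with the $P_i$ can produce walks that fail to be paths, because of sign-alternation constraints both at the start vertex $x$ and at intermediate junctions. To fix this, I would replace each $P_i$ by its influence zone $Z_i := E(P_i) \cup A(P_i, x)$. By \cref{cor:appendage_single_edge} the only edge of $Z_i$ incident to $x$ is $e_i$, and by \cref{lem:disjoint} any $x$-starting path edge-disjoint from $P_i$ also avoids $A(P_i, x)$; in particular the other $P_j$ are edge-disjoint from $Z_i$, as is $R$ wherever it is not forced into $Z_i$. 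At each point where the rerouted augmenting walk enters some $Z_i$, I would invoke \cref{lem:goodpath} to produce a path inside $Z_i$ from $x$ to the intended exit vertex that starts with $\vec{e}_i$ and arrives with the correct sign, and then use \cref{lem:concat_paths} to splice this initial segment with the continuation of $R$ outside $Z_i$ into a genuine $x$--$y$~path.

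The most delicate part, I expect, is the case analysis for the interaction of $R$ with the zones $Z_i$: in a directed graph the augmenting path enters and leaves each $P_i$ in an orderly way, whereas in the bidirected setting the sign carried at each shared vertex determines whether one may continue along $R$, switch onto some $P_j$, or must detour through $A(P_j, x)$. Tracking these signs consistently at every transition --- and verifying that the appendage-based rerouting really does produce $k+1$ edge-disjoint paths with the prescribed starting edges $\vec{e}_1, \dots, \vec{e}_k$ --- will be the main technical obstacle.
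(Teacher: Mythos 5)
Your overall scheme is genuinely different from the paper's: you induct on the number of edges outside $\bigcup_i E(P_i)$, delete a free edge $f$, extract a $k$-edge separator $S$ of $B-f$ and an augmenting path $R$ through $f$, and then try to merge $R$ with all of $P_1,\dots,P_k$ at once. The paper instead inducts on the sum of the lengths of the $P_i$: it finds a path $P_{k+1}$ avoiding $e_1,\dots,e_k$, locates only the \emph{first} edge $\vec{e}$ where $P_{k+1}$ meets some $P_i$, and performs a single shortcut in an auxiliary graph $\hat{B}$ (replacing the two initial segments $R_1=P_kv$ and $R_2=P_{k+1}v$ and their appendages by one new edge $\hat{e}$ and one new apex vertex $a$), so that each induction step has to resolve exactly one crossing, controlled by exactly two appendages of two segments ending at the same vertex. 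Your set-up through the sentence ``each $P_i$ meets $S$ in exactly one edge'' is correct, as is the mutual exclusivity argument and the base case.

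The gap is the combination step, which is the entire content of the theorem and is left as a sketch with obstacles that the cited lemmata do not overcome. First, the zones $Z_i=E(P_i)\cup A(P_i,x)$ need not be pairwise disjoint: \cref{lem:disjoint} only gives $E(P_j)\cap A(P_i,x)=\emptyset$ for $j\neq i$, and nothing prevents $A(P_i,x)\cap A(P_j,x)\neq\emptyset$, so ``the zone the augmenting walk enters'' is not well defined and the rerouted paths produced inside overlapping appendages need not be edge-disjoint. Second, \cref{lem:concat_paths} requires that the tail $R$ avoid $E(P)\cup A(P,x)$ \emph{entirely}; since your augmenting path $R$ may leave a zone $Z_i$ and later re-enter $Z_i$ or some $Z_j$ (bidirected augmenting paths can weave arbitrarily), the hypothesis fails precisely in the cases you would need it, and iterating the splice is not justified by the lemma as stated. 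Third, the rerouting must simultaneously preserve all $k$ prescribed first edges $\vec{e}_1,\dots,\vec{e}_k$ while producing $k+1$ pairwise edge-disjoint genuine paths, and no invariant is proposed that controls this across multiple crossings. You correctly identify this as ``the main technical obstacle,'' but it is not a technicality to be tracked: it is the theorem. The paper's auxiliary-graph construction exists exactly to avoid ever having to perform this global merge, by reducing to one crossing per induction step; filling your gap would amount to a new proof of comparable difficulty, and as sketched the argument does not go through.
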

The proof of \cref{theo:edge_menger} is inspired by the B\"ohme, G\"oring and Harant's proof \cite{Boehme01} of Menger's Theorem for directed graphs.
Its technical finesse makes it possible to handle the complex structure of bidirected graphs.
\begin{proof}
	The proof is by strong induction on the sum of the lengths of the paths $P_i$.
	We may assume without loss of generality that $\sigma(e,x) = -$ for any edge $e$ incident with $x$, since changing these signs doesn't affect what counts as an $x$--$y$~path nor the edge-cleanness of~$x$.
	
	If the set $\{e_1, e_2, \ldots , e_k\}$ is as in \labelcref{item:edge_menger1} then we are done, so suppose not.
	Thus, there is an~$x$--$y$~path $P_{k+1}$ containing no $e_i$.
	If this path is edge-disjoint from the $P_i$ with $i \leq k$ then we are done, so suppose not.
	Let $\ve$ be the first edge of $P_{k+1}$ which lies on some other $P_i$.
    Without loss of generality we may assume that $e \in E(P_k)$.
	If $\ev$ were in $\vE(P_k)$ then $P_{k+1}\ve \inv{P}_k$ would be an $x$--$x$~trail, contradicting edge-cleanness of $x$.
	So we must instead have $\ve \in \vE(P_k)$.
	Let $v$ be the startvertex of $\ve$ and $w$ its endvertex.
	We define $R_1$ as $P_k v$ and $R_2$ as $P_{k+1}v$.
    Note that $R_1$ and $R_2$ have length at least one.
	
	All we will use about $R_2$ in the following argument is that it is edge-disjoint from all $P_i$ and that it can be extended as a path by adding the edge $e$.
	Note that if we had begun with the paths $P_1, \dots, P_{k-1}$ and $R_2 v P_k$ then $R_1$ would have the same properties with respect to this choice of paths.
	We will exploit this symmetry repeatedly in the following argument.
	
	Let $A_i := A(R_i, x) \cup E(R_i)$.
	Note that by~\cref{lem:disjoint} $A_1$ is disjoint from $E(R_2 v P_k)$ and $A_2$ is disjoint from $E(R_1 v P_k)$.
	Similarly both $A_1$ and $A_2$ are disjoint from $E(P_i)$ for $i < k$.
    To apply the induction hypothesis we construct a suitable graph $\hat{B}$ using the sets $A_1$ and $A_2$.
	
	We obtain $\hat{B}$ from $B$ by modifying it in the following ways (see \cref{fig:construction_aux_graph}):

	\begin{itemize}
		\item removing the edge $e$,
		\item adding a new edge $\hat{e}$ with endvertices $x, w$ and $\sigma(\hat{e}, x) = -$ and $\sigma(\hat{e}, w) = \sigma(e,w)$,
		\item removing all edges in $A_1 \cup A_2$,
		\item adding a new vertex $a$,
		\item adding a new edge $e_a$ from $x$ to $a$ with $\sigma(e_a, x) = -$ and $\sigma(e_a, a) = +$,
		\item adding, for each $i \in \{1,2\}$ and each signed vertex $(z, \alpha)$ for which there is a nontrivial $x$--$(z, \alpha)$~path $P$ with $E(P) \subseteq A_i$ (not just a trail) a new edge $f$ with endvertices $a, z$ and $\sigma(a, f) = -$ and with $\sigma(z, f)= \alpha$. We refer to $f$ as $e(P)$ and to $z$ as $z_P$.
	\end{itemize}
	
	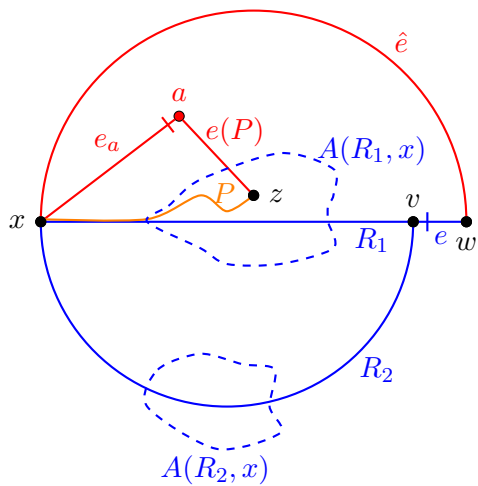
\begin{figure}
 	\begin{tikzpicture}[scale=.7]
		\node[dot] (xx) at (-16,6) [label={[]left:$x$}] {};
		\node[red,dot] (a) at (-13.4,8) [label={[red]above:$a$}] {};
		\node[dot] (cc) at (-8,6) [label={[]below:$w$}] {};
		\node[dot] (bb) at (-9,6) [label={[]above:$v$}] {};
  	    \node[dot]  (z) at (-12,6.5) [label={right:$z$}] {};
		\draw [thick, draw=blue] (xx) to (bb);
            \draw [-e+,thick, draw=blue] (cc) to (bb); 
		\draw [-e+, thick,color=red] (xx) to (a); 
		\draw [thick, draw=red] (a) to (z); 
  	\draw[thick, draw=red] (cc) arc (0:180:4);
		\draw[thick, draw=blue] (bb) arc (0:-180:3.5);
		\draw [thick, draw=orange]  plot [smooth] coordinates {(-16,6.05) (-14,6.05) (-13.,6.5) (-12.5,6.2) (z)};
		\draw [thick, dashed, draw=blue]  plot [smooth cycle] coordinates {(-14.0,6) (-13.6,6.3) (-13,6.8) (-11.5,7.3) (-10.5,7) (-10.5,6.5) (-10.6,6.3) (-10.6,6)  (-10.5,5.5) (-11.5,5.2) (-12.5,5.2) (-13,5.5) };
		\draw [thick, dashed, draw=blue]  plot [smooth cycle] coordinates {(-14.,2.5)(-13.8,3.2) (-13,3.5)  (-12.2,3.3)  (-11.6,3.2) (-11.7,2.5) (-11.5,2) (-12.7,1.7) };
			\node[red,left] at (-8.9,9.4) {$\hat e$};
		\node[red,left] at (-14.3,7.5) {$e_a$};
		\node[red,left] at (-11.6,7.7) {$e(P)$};
		\node[orange,left] at (-12.15,6.55) {$P$};
		\node[blue,right] at (-11,7.35){$A(R_1,x)$};
		\node[blue,right] at (-10.3,5.65) {$R_1$};
		\node[blue,right] at (-8.8,5.7){$e$};
		\node[blue,right] at (-10.2,3.2){$R_2$};
		\node[blue,left] at (-11.5,1.3){$A(R_2,x)$};
  \foreach \x in {xx,cc,z,bb} \node[dot] at (\x) {};
	\end{tikzpicture}
	    \caption{Construction of the auxiliary graph $\hat{B}$ in the proof of \cref{theo:edge_menger}.}
	    \label{fig:construction_aux_graph}
	\end{figure}
	
    We set $\vehat$ and $\ve_a$ to be the orientation of $\hat{e}$ and $e_a$ that point away from $x$.
    Furthermore, we set $\ve(P)$ to be the orientation of $e(P)$ that points away from $a$ and $\ve$ the orientation of $e$ that points away from $v$.
    For $i < k$ let $\hat{P}_i := P_i$, and let $\hat{P}_k := \vehat w P_k$.
	Note that $\hat{P}_k$ is shorter than $P_k$.
	
	We now want to apply the induction hypothesis to $\hat{B}$ and the paths $\hat{P}_i$ for $i \leq k$.
	Since the sum of the lengths of the paths has decreased, we just need to check that $x$ is still edge-clean in $\hat{B}$.
	So suppose for a contradiction that there is an $x$--$x$~trail $\hat{Q}$ in $\hat{B}$. 
	
	Since all edges incident to $x$ are incident with the same sign, no internal edge of $\hat{Q}$ is incident with $x$.
	Similarly, since $e_a$ is the only edge incident to $a$ with sign $+$, if $E(\hat{Q})$ contains any~$e(P)$, then that $e(P)$ must be adjacent to $e_a$, which in turn must be the first or last edge of~$\hat{Q}$.
	
	Our aim now is to construct a $x$--$x$~trail $Q$ in $B$, thus contradicting the edge-cleanness of $x$ in $B$.
	Let $\vq_1$ be the first edge and $\qv_2$ the last edge of $\hat{Q}$.
	Replacing $\hat{Q}$ with its reversal and relabelling these edges if necessary, we have the following cases:
	
	\begin{description}
		\item[Case 1: Both $q_1$ and $q_2$ are edges of $B$]
		    We can set $Q := \hat{Q}$.
		
		\item[Case 2: $q_1 = \hat{e}$ and $q_2$ is an edge of $B$]
	    	We can set $Q := P_k w \hat{Q}$. 
		
		\item[Case 3: $q_1 = e_a$ and $q_2$ is an edge of $B$]
    		The second edge of $\hat{Q}$ must be of the form $\ve(P)$ for some nontrivial path $P$ in some $A_i$.
    		So we can set $Q := P z_P \hat{Q}$.
		
		\item[Case 4: $q_1 = e_a$ and $q_2 = \hat{e}$]
    		The second edge of $\hat{Q}$ must be of the form $\ve(P)$ for some nontrivial path $P$. Without loss of generality $E(P)$ is contained in $A_1$.
      So we can set $Q := P z_P \hat{Q} w \cev{e} v \inv{R}_2$.
	\end{description}
	
	In any case we reach the desired contradiction, so we can conclude that $x$ is still edge-clean in $\hat{B}$. 
	Thus, we can apply the induction hypothesis.
	This gives us two cases:
	
	\begin{description}
		\item[Case 1:] \textbf{There is a set $\hat{S}$ of $k$ edges of $\hat{B}$ such that $\hat{B} - \hat{S}$ does not contain an $x$--$y$~path:}
		    Since the paths $\hat{P}_i$ are edge-disjoint, $\hat{S}$ must consist of one edge from each of these paths and so cannot contain $e_a$ or any $e(P)$.
		    Let $S := \hat{S}$ if $\hat{e} \not \in \hat{S}$ and let $S := (S \setminus \{\hat{e}\}) \cup \{e\}$ otherwise.
		    Clearly $S$ has size $k$.
		    We show that $B-S$ contains no $x$--$y$~path. 
		
		Suppose for a contradiction that there is such a path $Q$.
		To obtain our contradiction we construct an $x$--$y$~path $\hat{Q}$ in $\hat{B} - \hat{S}$. 
		\begin{description}
			\item[Case 1.1: $E(Q)$ is disjoint from $A_1 \cup A_2 \cup \{e\}$]
			    We can set $\hat{Q} := Q$.
			
			\item[Case 1.2: $\ve \in \vE(Q)$ and $E(w Q)$ is disjoint from $A_1 \cup A_2$]
			    We have $e \not \in S$, which implies~$\hat{e} \not \in \hat{S}$.
			    Thus, we can set $\hat{Q} := x \vehat w Q$.
			
			\item[Case 1.3: Otherwise]
    			Since $v \in V(A_1)$ the fact that we are not in Case 1.1 implies that $Q$ meets $V(A_1 \cup A_2) \setminus \{x\}$.
    			Let $z$ be the last vertex of $Q$ in this set.
       Without loss of generality we have $z \in V(A_1) \setminus \{x\}$.
    			The fact that we are not in Case 1.2 implies that even if $e$ appears on $Q$ some vertex in $V(A_1 \cup A_2) \setminus \{x\}$ (possibly $v$) must come after it.
    			So the set $E(z Q)$ does not contain $e$.
       Let $\alpha$ be the sign with which $Q$ arrives at $z$.
       Applying~\cref{lem:goodpath} to the $x$--$(z, \alpha)$ path $Q z$ we see that there must be an $x$--$(z, \alpha)$ path $P$ in $A_1$.
    			Then we can set $\hat{Q} := x \ve_a a \ve(P) z Q$.
		\end{description}
		In each case, we find the desired contradiction, completing the proof of this case.
		
		\item[Case 2] \textbf{There are edge-disjoint $x$--$y$~paths $\hat{P}_1',\hat{P}_2', \dots, \hat{P}_{k+1}'$ such that the first edge of $\hat{P}_i'$ is $\ve_i$ for $i < k$ and the first edge of $\hat{P}_k'$ is $\vehat$:}
    		In this case, we will construct edge-disjoint $x$--$y$~paths $P_1',P_2',\dots, P_{k+1}'$ in $B$ such that the first edge of $P_i'$ is $\ve_i$ for~$i \leq k$.

            For $i < k$ the path $\hat{P}_i'$ avoids the edges $\hat{e}$ and $e_a$ since $\ve_i$ is the first edge of $\hat{P}_i'$, and therefore it avoids also any edge $e(P)$.
            Thus, $\hat{P}_i'$ is a path of $B$, and we set $P_i:= \hat{P}_i'$ for any $i < k$.
            It remains to define $P_k'$ and $P_{k + 1}'$.
            \begin{description}
                \item[Case 2.1: The first edge of $\hat{P}_{k + 1}'$ is not $\ve_a$] Then $\hat{P}_{k + 1}'$ is also a path of $B$ as it avoids $\hat{e}$ since $\hat{e} \in P_k$.
                We set $P_{k + 1}':=\hat{P}_{k + 1}'$ and let $P_k'$ be the path obtained by applying \cref{lem:concat_paths} to $R_1 v \ve w$ and $ w\hat{P}_k'$ with respect to $A_1$.
                \item[Case 2.2] \textbf{The first edge of $\hat{P}_{k + 1}'$ is $\ve_a$ and its second edge is $\ve(P)$ for some path $P$ in $A_2$:} We let $P_{k+1}'$ be the path obtained by applying \cref{lem:concat_paths} to $P z_P, z_P \hat{P}_{k+1}'$ with respect to $A_2$ and define $P_k'$ as in the prior case.
                \item[Case 2.3] \textbf{The first edge of $\hat{P}_{k + 1}'$ is $\ve_a$ and its second edge is $\ve(P)$ for some path $P$ in $A_1$:} Let $P_k'$ be the path that we obtain by applying \cref{lem:concat_paths} to~$P z_P,  z_P \hat{P}_{k+1}'$ with respect to $A_1$ and let $P_{k+1}'$ be the path obtained by applying \cref{lem:concat_paths} to~$R_2 v \ve w$ and $w \hat{P}_k'$ with respect to $A_2$.
            \end{description}
            Note that in any of these cases the paths $P_1', \dots, P_{k +1}'$ are edge-disjoint.
            To verify that $\ve_k$ is indeed the first edge of $P_k'$, it suffices to show that in the construction of $P_k'$ we applied \cref{lem:concat_paths} to a path whose first edge is $\ve_k$ and a path that avoids $x$.
            The first path is $P z_P$, if $P$ is in $A_1$, and otherwise it is $R_1 v \ve w$. Thus, the first edge of the first path is contained in $A_1$, and by \cref{cor:appendage_single_edge} it is $\ve_k$.
            The second path avoids the vertex $x$ by choice. Thus, the paths $P_1', \dots, P_{k +1}'$ are as desired.
	\end{description}
 
\end{proof}

\cref{theo:edge_menger} implies Menger's Theorem for edge-disjoint paths in both undirected and directed graphs.
To show this, we may regard any graph as a directed graph by replacing each edge~$e$ with the two directed edges~$\ve$ and~$\ev$ (see~\cite{bang2008} for an in-depth explanation).
Similarly, we may regard any directed graph as a bidirected graph by viewing a directed edge~$\ve$ from~$x$ to~$y$ as an edge~$e$ with endvertices~$x$ and~$y$ and signs~$-$ at~$x$ and~$+$ at~$y$.

We then obtain the desired version of Menger's Theorem for vertices $x$ and $y$ in a directed graph~$D$ by considering the directed graph $D'$ obtained from $D$ by deleting all edges that point towards~$x$.
Note that~$x$ is edge-clean in $D'$ since no trail ends in $x$.
If there is no $x$--$y$ path in $D' - S$ for some $S \subseteq V(D')$, then there is no $x$--$y$ path in $D - S$.
Furthermore, any~$x$--$y$~path in $D'$ is also an $x$--$y$ path in $D$.

\section{Menger's Theorem for vertex-disjoint paths}\label{sec:vertex-disjoint}

In this section we deduce the vertex-disjoint version of Menger's Theorem for bidirected graphs, \cref{theo:vertex_menger}, from the above shown edge-version, \cref{theo:edge_menger}.
\cref{theo:vertex_menger} will be formulated in terms of vertex-disjoint~$\X$--$\Y$~paths between two sets~$\X$ and~$\Y$ of signed vertices of a bidirected graph~$B$.
Analogously to the edge-version where the startvertex~$x$ of the paths had to be edge-clean, we have to require the set~$\X$ of signed vertices to be \emph{clean}, which is defined as follows.

\begin{definition}
    A set~$\X$ of signed vertices of a bidirected graph~$B$ is \emph{clean} if $B$ contains no nontrivial path starting and ending in~$\X$.
\end{definition}

\noindent We remark that a clean set~$\X$ of signed vertices may contain both~$(x, +)$ and~$(x, -)$ for a vertex~$x$.

Let us now recall the vertex-disjoint version of Menger's Theorem for bidirected graphs from the introduction:

\VertexMenger*

\noindent Note that in~\cref{theo:vertex_menger} we only require $\X$ to be clean, but not $\Y$ -- just as we only require~$x$ (and not $y$) to be edge-clean in \cref{theo:edge_menger}; indeed, we do not need any assumptions on~$\Y$.
We further note that~\cref{theo:vertex_menger} cannot be strengthened by fixing the signed startvertices of the paths~$P_i$ rather than just their startvertices, see~\cref{fig:counterexample_cannot_fix_signed_vertices_in_vertex_menger} for a counterexample.

\begin{figure}[ht]
    \centering
    \begin{tikzpicture}
        \node[dot] (x1) at (0,1) [label={left:$x_1$}] {};
        \node[dot,below=1cm] (x2) at (x1) [label=left:$x_2$] {};
        \node[dot,right=2cm] (y1) at (x1) [label=right:$y_1$] {};
        \node[dot,below=1cm] (y2) at (y1) [label=right:$y_2$] {};
        \node[shape = coordinate,xshift = -.02cm,yshift=.01cm] (start) at (x1) {};
        \node[shape = coordinate,xshift = .02cm,yshift=-.01cm] (finish) at (y2) {};
        \node[red,xshift=-1.3cm,yshift=.26cm] at (x1) {$(x_1,+)$};
        \node[red,xshift=-1.3cm,yshift=-.2cm] at (x1) {$(x_1,-)$};
        \node[red,xshift=-1.3cm,yshift=.03cm] at (x2) {$(x_2,+)$};
        \node[red,xshift=1.3cm,yshift=.03cm] at (y1) {$(y_1,+)$};
        \node[red,xshift=1.3cm,yshift=.03cm] at (y2) {$(y_2,-)$};
        \node[red,xshift=-1.3cm,yshift=.8cm] at (x1) {$\X$};
        \node[red,xshift=1.3cm,yshift=.8cm] at (y1) {$\Y$};
        \draw[line width=9pt, blue!60!white, line cap=round] (start) to node[pos=.8,above] {$P_1$} (finish);
        \draw[line width=8pt, blue!30!white, line cap=round] (start) to (finish);
        \foreach \startvertex/\endvertex in {x1/y1,y2/x1,y2/x2} \draw[-e+7,thick] (\startvertex) to (\endvertex) ;
        \node[dot] at (x1) {};
        \node[dot] at (y2) {};
    \end{tikzpicture}
    \caption{
    	This bidirected graph contains two disjoint~\textcolor{red}{$\X$}--\textcolor{red}{$\Y$}~paths, namely~$x_2 (x_2y_2,x_2,y_2) y_2$ and $x_1 (x_1y_1,x_1,y_1)y_1$.
    	However, while the \textcolor{red}{$\X$}--\textcolor{red}{$\Y$}~path \textcolor{blue!60!white}{$P_1 := x_1 (x_1y_2,x_1,y_2) y_2$} starts in~\textcolor{red}{$(x_1,+) \in \X$}, the graph contains no two disjoint \textcolor{red}{$\X$}--\textcolor{red}{$\Y$}~paths such that one of them starts in~\textcolor{red}{$(x_1,+)$}.
    }
    \label{fig:counterexample_cannot_fix_signed_vertices_in_vertex_menger}
\end{figure}
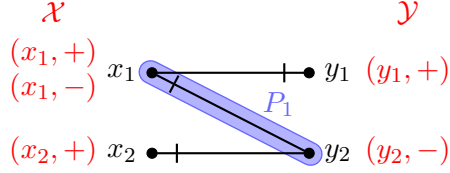

Towards a proof of~\cref{theo:vertex_menger}, let us first show that we may assume the~$\X$--$\Y$~paths in~$B$ to be the only paths starting in~$\X$ and ending in~$\Y$, respectively.
More precisely, by removing some edges from~$B$ and changing some signs at vertices of~$V(\X) \cup V(\Y)$, all nontrivial paths starting in~$\X$ and ending in~$\Y$ are indeed~$\X$--$\Y$ paths, i.e. they are internally disjoint from~$V(\X) \cup V(\Y)$ and contain no trivial $\X$--$\Y$~paths:

\begin{proposition} \label{prop:SimplifyXYpaths}
    Let $B$ be a bidirected graph and $\X, \Y$ sets of signed vertices in~$B$.
    Then there exists a bidirected graph $B'$ with $V(B') = V(B)$, $E(B') \subseteq E(B)$ and a set of signed vertices $\X' \subseteq \X$ in $B'$ such that
    \begin{enumerate}[label=(\arabic*)]
        \item\label{item:graph_appearance_assumption_paths_preserved_1} any $\X'$--$\Y$~path in $B'$ is an $\X$--$\Y$~path in $B$ and vice versa,
        \item\label{item:graph_appearance_assumption_X-Y_path_is_immediate_1} any path in $B'$ starting in $\X'$ and ending in $\Y$ is an $\X'$--$\Y$~path, and
        \item\label{item:graph_appearance_assumption_X-X_path_is_immediate_2} there is no trivial $\X'$--$\X'$ path in $B'$.
    \end{enumerate}
    Moreover, if $\X$ is clean in $B$, then $\X'$ is clean in $B'$.
\end{proposition}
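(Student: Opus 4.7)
My plan is to build $B'$ from $B$ by deleting a carefully chosen set of edges and to define $\X' \subseteq \X$ by keeping exactly one signed version of each vertex in $V(\X)$. I would process the vertices of $V(\X) \cup V(\Y)$ one at a time, either isolating a vertex in $B'$ (deleting every incident edge) or restricting the edges at it to a single admissible sign.

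In the isolation step I would remove every edge incident to vertices in two classes. Class~(a) consists of vertices $v$ forming a trivial $\X$--$\Y$ path, i.e., some $\gamma$ satisfies $(v,\gamma)\in\X$ and $(v,-\gamma)\in\Y$; for such $v$ I put $(v,\gamma)$ in $\X'$ so that the trivial $\X'$--$\Y$ path at $v$ is preserved. Class~(b) consists of vertices $v \in V(\X)$ with both $(v,+),(v,-)\in\X$; for these I put either one into $\X'$. Note that any class~(b) vertex lies outside $V(\Y)$, for otherwise some $(v,\delta)\in\Y$ combined with $(v,-\delta)\in\X$ would already place $v$ into class~(a). Since no nontrivial $\X$--$\Y$ path can start or end at a vertex forming a trivial $\X$--$\Y$ walk, these isolations destroy no nontrivial $\X$--$\Y$ path, while the trivial $\X$--$\Y$ path at a class~(a) vertex survives as a trivial $\X'$--$\Y$ path. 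For each remaining $v \in V(\X)\cup V(\Y)$ I choose a single preferred sign $\alpha_v$: for $v \in V(\X)\cap V(\Y)$, the absence of a trivial $\X$--$\Y$ walk at $v$ forces, by a short case analysis, the signs appearing in $\X$ at $v$ and in $\Y$ at $v$ to coincide to a single $\alpha_v$; for $v$ in exactly one of $V(\X), V(\Y)$, take any sign with $(v,\alpha_v)$ in the relevant set. I then delete every edge at $v$ whose sign at $v$ is $-\alpha_v$, and place $(v,\alpha_v)$ in $\X'$ whenever $v \in V(\X)$.

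To verify the conditions, (3) is immediate, since $\X'$ contains at most one signed version of each vertex by construction. For (2), any walk using a vertex $v \in V(\X) \cup V(\Y)$ as an internal vertex would need edges of both signs at $v$, but $B'$ retains at most one sign at each such vertex; hence every path in $B'$ from $\X'$ to $\Y$ has no internal vertex in $V(\X')\cup V(\Y) = V(\X)\cup V(\Y)$, and the sign choices further rule out the start or end forming a trivial $\X'$--$\Y$ walk. For (1), every walk in $B'$ is also a walk in $B$, and since $V(\X')=V(\X)$ together with the sign choices prevent trivial $\X$--$\Y$ walks at the start or end of such paths, the conditions defining ``$\X'$--$\Y$ path in $B'$'' and ``$\X$--$\Y$ path in $B$'' coincide on walks realizable in $B'$. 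For the moreover clause, assume $\X$ is clean and suppose a nontrivial $\X'$--$\X'$ path $Q$ exists in $B'$: its endpoints are not isolated, hence each has only a single sign in $\X$, so neither forms a trivial $\X$--$\X$ walk, and the internal vertices of $Q$ avoid $V(\X)$ by the sign-restriction argument; thus $Q$ is a nontrivial $\X$--$\X$ path in $B$, contradicting cleanness of $\X$.

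The trickiest step, I expect, is the sign-coincidence argument at vertices in $V(\X) \cap V(\Y)$ together with the careful choice of which vertices to isolate and which to merely restrict; once this dichotomy is in place, all of the conditions (1)--(3) and the moreover clause fall out by routine verification.
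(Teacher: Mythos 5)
There is a genuine gap in your treatment of vertices carrying both signs of $\X$ (your class~(b)), and symmetrically of vertices carrying both signs of $\Y$. You isolate every $v$ with $(v,+),(v,-)\in\X$, and you justify all isolations by the observation that no nontrivial $\X$--$\Y$ path can start at a vertex forming a trivial $\X$--$\Y$ walk. But that observation only covers your class~(a): a vertex $v$ with $(v,+),(v,-)\in\X$ and $v\notin V(\Y)$ does \emph{not} form a trivial $\X$--$\Y$ walk, and a nontrivial $\X$--$\Y$ path may perfectly well start at $v$, with either sign. Isolating $v$ destroys every such path, and $v$ is not a trivial $\X'$--$\Y$ path either, so condition~(1) fails. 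The same problem recurs at vertices $v\in V(\Y)\setminus V(\X)$ with $(v,+),(v,-)\in\Y$: whichever single sign $\alpha_v$ you keep, you delete the last edge of every $\X$--$\Y$ path arriving at $v$ with sign $-\alpha_v$, again breaking~(1). Note the paper explicitly remarks that a clean $\X$ may contain both signs of a vertex, so these cases cannot be assumed away.

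The underlying obstruction is your self-imposed restriction to pure edge deletion. The hypothesis $E(B')\subseteq E(B)$ constrains only the underlying edge set, not the signing, and the paper's construction exploits exactly this freedom: at a vertex $v$ with both signs in $\X\setminus\Y$ (or in $\Y\setminus\X$) it keeps all incident edges but re-signs them all to $+$ at $v$, retaining only $(v,+)$ in $\X'$. Every path formerly starting (or ending) at $(v,-)$ then starts (or ends) at $(v,+)$, so no path is lost while~(3) still holds. Your remaining steps --- isolating the vertices that form trivial $\X$--$\Y$ walks, pruning the unused sign at vertices with only one sign in $\X\cup\Y$, and the sign-coincidence argument on $V(\X)\cap V(\Y)$ --- correspond to the paper's modifications and are sound; it is only the two-signs case that requires the re-signing idea rather than isolation or one-sided deletion.
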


\noindent Note that, by \labelcref{item:graph_appearance_assumption_paths_preserved_1}, \cref{theo:vertex_menger}, or more generally any version of Menger's theorem concerning vertex-disjoint~$\X$--$\Y$~paths, holds for~$B$, $\X$ and~$\Y$ if and only if it holds for~$B'$, $\X'$ and~$\Y$.

\begin{proof}
    We construct $B'$ and $\X'$ from $B$ and $\X$ by performing the following modifications for every~$v \in V(\X) \cup V(\Y)$ simultaneously:
    \begin{enumerate}[label=(\alph*)]
        \item\label{item:modification_a} If there is $\alpha \in \{+, -\}$ with $(v, \alpha) \notin \X \cup \Y$, we delete all edges incident to $v$ that have sign $\alpha$ at $v$.
        \item\label{item:modification_b} If $(v, +)$ and $(v, -)$ are either both contained in $\X \setminus \Y$ or both contained in $\Y \setminus \X$, we change the signs of all edges incident to $v$ at $v$ to $+$.
        Furthermore, if $(v, -) \in \X$, then we remove $(v, -)$ from $\X$.
        \item\label{item:modification_c} If there is $\alpha \in \{+, -\}$ such that $(v, \alpha) \in \X$ and $(v, -\alpha) \in \Y$, we isolate $v$, i.e.\ we delete all edges incident with $v$.
        If $(v, -\alpha) \in \X$, then we also delete $(v, -\alpha)$ from $\X$ .
    \end{enumerate}
    With this construction, we in particular obtain the following properties of~$B'$:
    
    \begin{enumerate}[label=(\roman*)]
        \item\label{item:construction_of_B_prime_1} $V(\X) \cup V(\Y) = V(\X') \cup V(\Y )$, and all edges incident to $v \in V(\X') \cup V(\Y )$ have the same sign at $v$ in $B'$.
        \item\label{item:construction_of_B_prime_2} If an edge of $B'$ is incident to a vertex $w \notin V(\X') \cup V(\Y)$, then it has the same sign at $w$ in~$B'$ and in~$B$.
        \item\label{item:construction_of_B_prime_3} If a vertex $v$ forms a trivial $\X$--$\Y$~path in $B$, then it is an isolated vertex in $B'$ and forms a trivial $\X'$--$\Y$~path in $B'$.
    \end{enumerate}
        
    We begin by proving \labelcref{item:graph_appearance_assumption_paths_preserved_1}. For trivial paths, the statement is true by \labelcref{item:construction_of_B_prime_3} and since~$\X' \subseteq \X$.
    So let $P$ be a nontrivial $\X$--$\Y$~path in $B$.
    Then neither its start- nor its endvertex forms a trivial $\X'$--$\Y$~path in $B'$ since they each do not form a trivial $\X$--$\Y$~path in $B$.
    In particular, we did not apply modification~\labelcref{item:modification_c} to the start- or endvertex of~$P$.
    As all internal vertices of $P$ are disjoint to $V(\X) \cup V(\Y)$, we did not apply any modification to these vertices.
    This implies that all edges of $P$ are edges in $B'$.
    By \labelcref{item:construction_of_B_prime_2} $P$ is thus also a path in $B'$.
    Clearly, it is internally disjoint to~$V(\X') \cup V(\Y)$.
    Finally, since we applied either modification~\labelcref{item:modification_a} or \labelcref{item:modification_b} to its start- and endvertex, the path~$P$ in $B'$ starts in $\X'$ and ends in $\Y$.
    
    Let $P'$ be a nontrivial $\X'$--$\Y$~path in $B'$.
    Then the path $P'$ is, by \labelcref{item:construction_of_B_prime_1} internally disjoint to $V(\X) \cup V(\Y) = V(\X') \cup V(\Y)$ and thus also a path in $B$ by \labelcref{item:construction_of_B_prime_2}.
    As~ \labelcref{item:graph_appearance_assumption_paths_preserved_1} holds for trivial paths, neither its start- nor its endvertex forms a trivial $\X$--$\Y$~path in $B$. In particular, we did not apply modification~\labelcref{item:modification_c} to its start- or endvertex.
    Thus, we applied either modification~\labelcref{item:modification_a} or \labelcref{item:modification_b} to its start- and endvertex.
    This implies that $P'$ in $B$ starts in~$\X$ and ends in~$\Y$, which completes the proof of \labelcref{item:graph_appearance_assumption_paths_preserved_1}.
    
    For \cref{item:graph_appearance_assumption_X-Y_path_is_immediate_1}, consider a nontrivial path~$P'$ in $B'$ that starts in $\X'$ and ends in $\Y$.
    To show that~$P'$ is an~$\X'$--$\Y$ path, we need to verify two properties:
    First, no internal vertex of~$P'$ is contained in~$V(\X')\cup V(\Y)$, which is true by \cref{item:construction_of_B_prime_1}.
    Second, $P'$ contains no trivial $\X'$--$\Y$ path.
    Indeed, if a vertex~$v \in P'$ is a trivial~$\X'$--$\Y$~path in~$B'$, then it is also a trivial~$\X$--$\Y$~path in~$B$ by~\cref{item:graph_appearance_assumption_paths_preserved_1} and hence isolated by~\cref{item:construction_of_B_prime_3}, which contradicts that~$P'$ is nontrivial.

    For \cref{item:graph_appearance_assumption_X-X_path_is_immediate_2}, note that the modifications \labelcref{item:modification_a,item:modification_b,item:modification_c} ensure that $(v,\alpha) \in \X'$ implies~$(v,-\alpha) \notin \X'$.

    For the `moreover'-part, assume that $\X$ is clean in~$B$ and suppose for a contradiction that~$x$ is not edge-clean in~$B'$.
    Then there exists a nontrivial path $P$ in $B'$ that starts and ends in~$\X'$.
    By \cref{item:construction_of_B_prime_1}, no internal vertex of $P$ is contained in~$V(\X) \cup V(\Y)$.
    Furthermore, by \cref{item:construction_of_B_prime_2}, $P$ still forms a path in~$B$.
    Thus, $P$ is a nontrivial path in $B$ starting and ending in~$\X$, which contradicts the cleanness of~$\X$ in~$B$.
\end{proof}

From now on, let~$B$ be a bidirected graph and let~$\X$ and~$\Y$ be sets of signed vertices of~$B$.
By~\cref{prop:SimplifyXYpaths} we may assume that~$B$ satisfies \labelcref{item:graph_appearance_assumption_X-Y_path_is_immediate_1,item:graph_appearance_assumption_X-X_path_is_immediate_2}.
To prove~\cref{theo:vertex_menger} for~$B$, $\X$ and~$\Y$, we now carefully construct an auxiliary graph~$\hat{B}$ with two specified vertices~$x$ and~$y$, such that~\cref{theo:edge_menger}, our edge-disjoint version of Menger's theorem in bidirected graphs, applied to~$\hat{B}$, $x$ and~$y$ allows us to deduce~\cref{theo:vertex_menger} for~$B$, $\X$ and~$\Y$.

The construction of~$\hat{B}$, however, turns out to be even more general:
It provides a framework to deduce a vertex-version of Menger's theorem in bidirected graphs from an edge-version.
In particular, our construction of~$\hat{B}$ is independent of the cleanness assumption on~$\X$ as in~\cref{theo:vertex_menger}, and the cleanness of~$\X$ in~$B$ will only be used to make~\cref{theo:edge_menger} applicable to~$x$ in~$\hat{B}$ in that it implies~$x$ to be edge-clean.

In order to make the above approach work, we construct~$\hat{B}$, $x$ and~$y$ such that the vertices~$x$ and~$y$ of~$\hat{B}$ represent the sets~$\X$ and~$\Y$ of signed vertices of~$B$, respectively.
They do so in such a way that any set of vertex-disjoint $\X$--$\Y$~paths in $B$ correspond to edge-disjoint $x$--$y$~paths in $\hat{B}$ and vice versa.

To show that a vertex-version of Menger's theorem for~$B$, $\X$ and~$\Y$ can be deduced from an edge-version of Menger's theorem for~$\hat{B}$, $x$ and~$y$, we then verify two properties of our construction.
We first show that an edge-separator for~$x$ and~$y$ in~$\hat{B}$ corresponds to a vertex-separator for $\X$ and $\Y$ in $B$ of at most the same size.
Secondly, we consider vertex-disjoint $\X$--$\Y$~paths $P_1, \dots, P_k$ in $B$.
By our construction, they correspond to edge-disjoint $x$--$y$~paths $\hat{P}_1, \dots, \hat{P}_k$ in $\hat{B}$.
We then show that $k + 1$ edge-disjoint $x$--$y$~paths in $\hat{B}$ such that all but one start in the same edges as $\hat{P}_1, \dots, \hat{P}_k$ correspond to $k + 1$ vertex-disjoint $\X$--$\Y$~paths such that all but one start in the same vertices as $P_1, \dots, P_k$.

Finally, we show that our construction indeed allows us to apply~\cref{theo:edge_menger} to~$\hat{B}$, $x$ and~$y$ under the assumptions of~\cref{theo:vertex_menger} on~$B$, $\X$ and~$\Y$.
More precisely, we prove that the cleanness of~$\X$ in~$B$ implies that~$x$ is edge-clean in~$\hat{B}$. \\

Let us now construct~$\hat{B} = (\hat{G}, \hat{\sigma})$ from~$B = (G, \sigma)$ (see~\cref{fig:Construction_of_auxiliary_graph} for an illustration of this construction).
For every vertex~$v \in B$, the vertex set of $\hat{B}$ contains two vertices $v^+$ and $v^-$ which are connected in~$\hat{B}$ by an edge with sign~$+$ at~$v^-$ and sign~$-$ at $v^+$.
An edge~$e \in E(B)$ which is incident to~$u$ and~$v$ in~$B$ then transfers to an edge $\hat{e} \in E(\hat{B})$ that is incident to~$u^{\sigma(u,e)}$ and $v^{\sigma(v,e)}$.
The half-edges of~$\hat{e}$ have the same signs as the respective half-edges of~$e$, that is $\hat{\sigma}(u^{\sigma(u,e)},\hat{e}):=\sigma(u,e)$ and~$\hat{\sigma}(v^{\sigma(v,e)},\hat{e}):=\sigma(v,e)$.

We finally add two new vertices~$x$ and~$y$ to~$V(\hat{B})$, which shall represent the sets~$\X$ and~$\Y$, respectively, as follows.
The vertex~$x$ is adjacent to those~$v^\alpha \in V(\hat{B})$ with~$(v, -\alpha) \in \X$, and the respective edge in~$\hat{B}$ has sign~$\alpha$ at $v^\alpha$ and sign~$-$ at~$x$.
Analogously, the vertex~$y$ is adjacent to every~$v^\beta \in V(\hat{B})$ with $(v, -\beta) \in \Y$, and the respective edge has sign~$\beta$ at~$v^\beta$ and sign~$-$ at~$y$.

Observe that with this construction of~$\hat{B} = (\hat{G}, \hat{\sigma})$, the graph~$\hat{G}$ has no parallel edges.
For notational simplicity, we will thus identify an (oriented) edge in~$\hat{B}$ with the (ordered) pair of its endvertices in what follows.

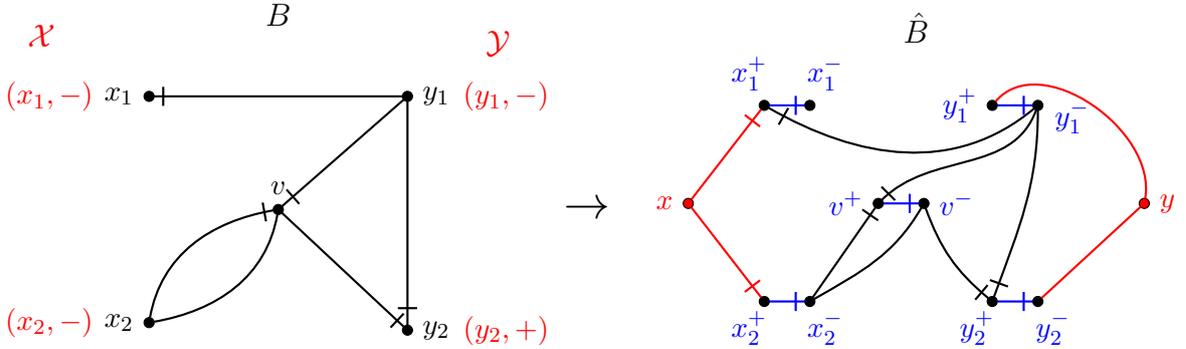
\begin{figure}[htb]
	\centering
	\begin{tabular}{ccc}
       	\begin{tikzpicture}
           	\node[dot] (v) at (0,0) [label=above:$v$] {};
           	\node[dot] (y1) at (1.7,1.5) [label=right:$y_1$] {};
           	\node[red,right] at (2.3,1.5) {$(y_1,-)$};
           	\node[dot] (y2) at (1.7,-1.6) [label=right:$y_2$] {};
           	\node[red,right] at (2.3,-1.6) {$(y_2,+)$};
           	\node[red,right] at (2.6,2.2) {\large$\Y$};
           	\node[dot] (x1) at (-1.7,1.5) [label=left:$x_1$] {};
           	\node[red,left] at (-2.3,1.5) {$(x_1,-)$};
           	\node[dot] (x2) at (-1.7,-1.5) [label=left:$x_2$] {};
           	\node[red,left] at (-2.3,-1.5) {$(x_2,-)$};
           	\node[red,left] at (-2.8,2.3) {\large$\X$};
           	\node[above] at (0,2.3) {\large$B$};
           	\draw[-e+4, thick] (y1) to (v);
           	\draw[-e+, thick] (v) to (y2);
           	\draw[-e+6, thick] (y1) to (y2);
           	\draw[-e+, thick] (y1) to (x1);
           	\draw[thick] (x2) [out=10,in=260] to (v) ;
           	\draw[-e+,thick] (x2) [out=80,in=190] to (v) ;
           	\node[left] at (4.5,0) {\LARGE$\rightarrow$};
       	\end{tikzpicture}
    
    &
       	\begin{tikzpicture}
           	\node[dot] (v+) at (-.5,-.3) [label={[blue]left:$ v^+$}] {};
           	\node[dot] (v-) at (.1,-.3) [label={[blue]right:$ v^-$}] {};
           	\node[dot] (y1+) at (1,1) [label={[blue]left:$ y_1^+$}] {};
           	\node[dot] (y1-) at (1.6,1) [label={[xshift=.45cm, yshift=-.575cm,blue]:$ y_1^-$}] {};
           	\node[dot] (y2+) at (1,-1.6) [label={[xshift=-.2cm, yshift=-.775cm,blue]:$ y_2^+$}] {};
           	\node[dot] (y2-) at (1.6,-1.6) [label={[xshift=.2cm, yshift=-.775cm,blue]:$ y_2^-$}] {};
           	\node[dot] (x1+) at (-2,1) [label={[xshift=-.2cm,blue]:$ x_1^+$}] {};
           	\node[dot] (x1-) at (-1.4,1) [label={[xshift=.2cm,blue]:$ x_1^-$}] {};
           	\node[dot] (x2+) at (-2,-1.6) [label={[xshift=-.2cm, yshift=-.775cm,blue]:$ x_2^+$}] {};
           	\node[dot] (x2-) at (-1.4,-1.6) [label={[xshift=.2cm, yshift=-.775cm,blue]:$ x_2^-$}] {};
           	\node[dot] (y) at (3,-.3) [fill=red,label={[red]right:$y$}] {};
           	\node[dot] (x) at (-3,0-.3) [fill=red,label={[red]left:$x$}] {};
           		\node[above] at (0,1.7) {\large$\hat B$};
           	\draw[-e+, thick] (y1-) [out=250,in=45] to (v+) ;
           	\draw[-e+, thick] (v-) [out=290,in=140] to (y2+) ;
           	\draw[-e+4, thick] (y1-) [out=270,in=70] to (y2+);
           	\draw[-e+6, thick] (y1-) [out=220, in=330] to (x1+);
           	\draw[red,thick] (y) to (y2-) ;
           	\draw[red,thick] (y) [out=80,in=50] to (y1+) ;
           	\draw[-e+4,red,thick] (x) to (x2+) ;
           	\draw[-e+4,red,thick] (x) to (x1+) ;
           	\draw[thick] (x2-) [out=30,in=240] to (v-) ;
           	\draw[-e+, thick] (x2-) to (v+);
                \foreach \x/\y in {v+/v-,x1+/x1-,x2+/x2-,y1+/y1-,y2+/y2-} \draw[-e+,thick,color=blue] (\x) to (\y); 	
            \end{tikzpicture}
            \end{tabular}   
	\caption{The construction of $\hat{B}$ from the bidirected graph $B$ with specified sets \textcolor{red}{$\X$} and \textcolor{red}{$\Y$} of signed vertices as in \cref{sec:vertex-disjoint}. \label{fig:Construction_of_auxiliary_graph}}
\end{figure}

Now we turn our attention to the relation between paths in $B$ and paths in $\hat{B}$.
Let~$P = v_1 \ve_1 v_2 \dots v_{n-1} \ve_{n-1} v_{n}$ be any~$\X$--$\Y$~path in~$B$, and write~$\alpha_\ell := \sigma(v_\ell,e_\ell)$ and $\beta_\ell := \sigma(v_{\ell+1}, e_\ell)$ for all~$\ell < n$.
Since~$P$ is an~$\X$--$\Y$ path in~$B$, we have~$\beta_\ell = -\alpha_{\ell+1}$ for all~$\ell < n$ as well as~$(v_1, \alpha_1) \in \X$ and~$(v_n, \beta_{n-1}) \in \Y$.
Thus, the construction of~$\hat{B}$ guarantees that
\begin{align*}
    \hat{P} :=
        &x (x,v_1^{-\alpha_1}) v_1^{-\alpha_1} (v_1^{-\alpha_1}, v_1^{\alpha_1}) v_1^{\alpha_1} (v_1^{\alpha_1},v_{2}^{\beta_1}) v_2^{\beta_1} \dots \\
        &\dots v_{n-1}^{\alpha_{n-1}} (v_{n-1}^{\alpha_{n-1}},v_{n}^{\beta_{n-1}}) v_n^{\beta_{n-1}} (v_n^{\beta_{n-1}}, v_n^{-\beta_{n-1}}) v_n^{-\beta_{n-1}} (v_{n}^{-\beta_{n-1}},y) y
\end{align*}
is an~$x$--$y$~path in~$\hat{B}$.
Moreover, if $\X$--$\Y$ paths~$P_1, \dots, P_k$ are vertex-disjoint, then the corresponding~$x$--$y$ paths~$\hat{P}_1, \dots, \hat{P}_k$ in~$\hat{B}$ are internally vertex-disjoint and in particular edge-disjoint.

With this transfer of~$\X$--$\Y$~paths in~$B$ to~$x$--$y$~paths in~$\hat{B}$ at hand, it is easy to show that an edge-separator of~$x$ and~$y$ in~$\hat{B}$ yields a vertex-separator of~$\X$ and~$\Y$ in~$B$ of at most the same size:

\begin{lemma}\label{lem:separator_implies_separator}
    If there is a set $\hat{S} \subseteq E(\hat{B})$ of size at most $k$ such that there is no $x$--$y$ path in $\hat{B} - \hat{S}$, then there is a set $S \subseteq V(B)$ of size at most $k$ such that there is no $\X$--$\Y$~path in~$B - S$.
\end{lemma}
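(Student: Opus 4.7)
The plan is to convert the edge separator $\hat S$ into a vertex separator $S$ by choosing, for each $\hat s \in \hat S$, a single vertex $f(\hat s) \in V(B)$ that ``witnesses'' the edge $\hat s$ in $B$, and then setting $S := \{f(\hat s) : \hat s \in \hat S\}$; this set automatically has size at most $|\hat S| \le k$. I partition the edges of $\hat B$ into four types according to its construction and define $f$ uniformly: if $\hat s = (v^+, v^-)$ is a split-edge, set $f(\hat s) := v$; if $\hat s = (x, v^\alpha)$ or $\hat s = (y, v^\alpha)$ is incident to $x$ or $y$, set $f(\hat s) := v$; and if $\hat s$ corresponds to an edge $e \in E(B)$ with endvertices $u, v$, pick $f(\hat s) := u$ (either endvertex will do).

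To verify that $B - S$ admits no $\X$--$\Y$ path, I will assume for a contradiction that some such path $P$ exists. The translation $P \mapsto \hat P$ described immediately before this lemma (extended to the trivial case by the length-$3$ path $x\,v^{-\alpha}\,v^\alpha\,y$ when $(v, \alpha) \in \X$ and $(v,-\alpha) \in \Y$) yields an $x$--$y$ path $\hat P$ in $\hat B$. Since $\hat B - \hat S$ contains no $x$--$y$ path, $\hat P$ must traverse some edge $\hat s \in \hat S$, and it then suffices to show that $f(\hat s) \in V(P)$, which contradicts $P \subseteq B - S$.

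The case analysis will be essentially bookkeeping on the structure of $\hat P$. In the split-edge case, every vertex $v_\ell$ of $P$ contributes exactly the edge $(v_\ell^+, v_\ell^-)$ to $\hat P$, so $\hat s = (v^+, v^-) \in E(\hat P)$ forces $v \in V(P)$. In the cases where $\hat s$ is incident with $x$ or $y$: since $\hat P$ is a simple $x$--$y$ path, such an $\hat s$ can only be the first or last edge of $\hat P$, respectively, so the corresponding $v$ is the start- or endvertex of $P$ and in particular lies on $P$. Finally, if $\hat s$ corresponds to an edge $e \in E(B)$, then the use of $\hat s$ on $\hat P$ means $e$ lies on $P$, so both endvertices of $e$, and in particular $u = f(\hat s)$, lie on $P$.

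I do not foresee any real obstacle here: the lemma is essentially asserting that the translation $P \mapsto \hat P$ admits a natural inverse at the level of separators, and the proof is just the case analysis above. The only slightly subtle point is to handle trivial $\X$--$\Y$ paths uniformly, but each of the three edges on their associated $\hat P$ already points under $f$ to the unique vertex of $P$, so trivial paths fall under the same argument.
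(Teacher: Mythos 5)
Your proposal is correct and follows essentially the same route as the paper: it defines $S$ by the identical edge-to-vertex assignment (split-edges and edges at $x$ or $y$ map to $v$, transferred edges to either endvertex) and then derives the contradiction via the correspondence $P \mapsto \hat{P}$. The only difference is that you spell out the case analysis and the trivial-path case explicitly, which the paper leaves as ``clearly avoid $\hat{S}$''.
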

\begin{proof}
    Let $\hat{S}$ be a set of at most $k$ edges in $\hat{B}$ such that there is no $x$--$y$ path in $\hat{B} - \hat{S}$.
    We define a set~$S$ of~$k$ vertices of~$B$ as follows: 
    \begin{itemize}
        \item if~$\hat{e} \in \hat{S}$ has the form $v^\alpha v^{-\alpha}$, then we add $v$ to $S$,
        \item if~$\hat{e} \in \hat{S}$ has the form $v^\alpha w^{\beta}$, then we arbitrarily add one of~$v$ and~$w$ to $S$, and
        \item if~$\hat{e} \in \hat{S}$ has the form $v^\alpha x$ or $v^\alpha y$, then we add $v$ to $S$.
    \end{itemize}
    Then there is no $\X$--$\Y$~path~$P$ in $B - S$, as the corresponding path~$\hat{P}$ in~$\hat{B}$ would clearly avoid~$\hat{S}$.
\end{proof}

We now turn to the transfer of~$x$--$y$~trails in~$\hat{B}$ to~$\X$--$\Y$~paths in~$B$.
For this, let us first observe that such trails admit a very particular structure due to the construction of~$\hat{B}$:

\begin{proposition} \label{obs:vertex_menger_properties_aux_graph}
    Let $T$ be either an $x$--$y$~trail or a nontrivial $x$--$x$~trail in $\hat{B}$. Then
    \begin{enumerate}[label=(\arabic*)]
        \item\label{itm:prop_trails_in_hat_b_1} every internal vertex of $T$ has the form~$v^{\alpha}$ for some~$v \in V(B)$ and~$\alpha \in \{+, -\}$,
        \item\label{itm:prop_trails_in_hat_b_2} every internal vertex of $T$ is met by $T$ exactly once, and
        \item\label{itm:prop_trails_in_hat_b_3} precisely every second edge of $T$ is of the form~$(v^\alpha, v^{-\alpha})$.
    \end{enumerate}
\end{proposition}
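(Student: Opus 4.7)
The plan is to exploit three local sign properties of $\hat{B}$ that follow directly from the construction: every edge incident to $x$ has sign $-$ at $x$; every edge incident to $y$ has sign $-$ at $y$; and at any vertex of the form $v^\alpha$, the edge $(v^+, v^-)$ has sign $-\alpha$ at $v^\alpha$, while every other incident edge (arising either from an edge of $B$ or from~$x$ or~$y$) has sign $\alpha$ at $v^\alpha$.

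To prove~(1), I would observe that the walk definition forces the two $T$-edges at any internal vertex to differ in sign there, which is impossible at $x$ or $y$ since all their incident edges share the same sign $-$. Whether $T$ is an $x$--$y$ trail or a nontrivial $x$--$x$ trail, both of its endpoints lie in $\{x, y\}$, so every internal vertex must be of the form $v^\alpha$.

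For~(3), I would proceed by induction on the position of an edge along $T$. The first edge is incident to $x$, so it is not of the form $(v^\alpha, v^{-\alpha})$. For the inductive step, let $u$ be the internal vertex between the $i$th and $(i+1)$st edge of $T$; by~(1) we have $u = v^\alpha$ for some $v, \alpha$. The sign-alternation condition at $u$ combined with the fact that $(v^+, v^-)$ is the only edge with sign $-\alpha$ at $v^\alpha$ forces exactly one of the two edges to be $(v^+, v^-)$. Hence the classification toggles at each step, so precisely the edges at even positions of $T$ are of the required form.

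Item~(2) then follows quickly: suppose $v^\alpha$ were visited as an internal vertex more than once. Both visits are internal (since the endpoints of $T$ lie in $\{x, y\}$), and each requires one of the two adjacent $T$-edges to equal the unique edge $(v^+, v^-)$, by the local sign analysis used above. Thus $(v^+, v^-)$ would appear twice along $T$, contradicting the trail property. The main conceptual content is the alternation argument in~(3); both the $x$--$y$ and nontrivial $x$--$x$ cases are handled uniformly since $T$ starts at $x$ in both, and no further case analysis is required.
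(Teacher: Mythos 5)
Your proposal is correct and follows essentially the same route as the paper: both rest on the observation that all edges at $x$ and $y$ have sign $-$ there and that $(v^+,v^-)$ is the unique edge with sign $-\alpha$ at $v^\alpha$, from which (1), the alternation in (3), and the trail-based uniqueness in (2) all follow exactly as you argue. No gaps.
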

\begin{proof}
    All edges incident to~$x$ or~$y$ have sign~$-$ at~$x$ or~$y$, so every internal vertex of a trail in~$\hat{B}$ has the form~$v^{\alpha}$ for some~$v \in V(B)$ and~$\alpha \in \{+, -\}$, which shows \labelcref{itm:prop_trails_in_hat_b_1}. For every vertex of this form~$v^\alpha$ (i.e.\ every vertex of~$\hat{B}$ other than~$x$ and~$y$), there exists precisely one edge in~$\hat{B}$ which has sign~$-\alpha$ at~$v^\alpha$, namely the edge~$v^{\alpha} v^{-\alpha}$. This implies that $T$ uses the edge~$(v^\alpha, v^{-\alpha})$ either immediately before or after meeting $v^\alpha$. In particular, \labelcref{itm:prop_trails_in_hat_b_3} holds. Since $v^\alpha$ is neither the start- nor the endvertex of $T$, it is met by $T$ exactly once, which proves \labelcref{itm:prop_trails_in_hat_b_2}.
\end{proof}

Now let $\hat{T} := v_1 \vehat_1 v_2 \vehat_2 \dots \vehat_{n-2} v_{n-1} \vehat_{n -1} v_n$ be either an $x$--$y$~trail or a nontrivial $x$--$x$~trail in $\hat{B}$.
By \cref{obs:vertex_menger_properties_aux_graph}~\labelcref{itm:prop_trails_in_hat_b_1} and~\labelcref{itm:prop_trails_in_hat_b_2}, the trail $\hat{T}$ meets all vertices of $\hat{B}$ but $x$ at most once.
Thus, its subtrail $\hat{T}':= \vehat_2 \hat{T} \vehat_{n - 2}$ is a path.

Now replace every subtrail of~$\hat{T}'$ that has the form~$v^\alpha (v^\alpha, v^{-\alpha}) v^{-\alpha}$ by the vertex~$v \in B$;
note in particular that both~$v_2 \vehat_2 v_3$ and~$v_{n-2} \vehat_{n-2} v_{n-1}$ have this form.
Moreover, replace any edge of~$\hat{T}'$ that has the form~$(v^\alpha, w^{\beta})$ for distinct~$v, w \in V(B)$ by the oriented edge in~$B$ that has sign~$\alpha$ at~$v$ and sign~$\beta$ at~$w$ (such an edge exists by the construction of~$\hat{B}$).
\cref{obs:vertex_menger_properties_aux_graph}~\labelcref{itm:prop_trails_in_hat_b_3} then guarantees that the constructed~$P$ is indeed a path in~$B$.

We show that~$P$ starts in~$\X$; the case of~$P$ ending in~$\X$ or in $\Y$ is symmetrical.
The only neighbours of~$x$ in~$\hat{B}$ are the vertices~$v^\alpha$ with~$(v, -\alpha) \in \X$.
\cref{obs:vertex_menger_properties_aux_graph}~\labelcref{itm:prop_trails_in_hat_b_3} then implies that if~$v_2 = v^{\alpha}$, then~$\vehat_2 = (v^{\alpha}, v^{-\alpha})$ and so~$\hat{e}_3$ has sign~$-\alpha = -\hat{\sigma}(v_3, \hat{e}_2)$ at~$v_3 = v^{-\alpha}$.
Thus, our construction of~$P$ guarantees that~$P$ starts in~$(v, -\alpha) \in \X$, as desired.

So if~$\hat{T}$ is an~$x$--$y$~trail, then~$P$ starts in~$\X$ and ends in~$\Y$ and it is indeed an~$\X$--$\Y$~path in~$B$, since we assumed~$B$ to satisfy~\labelcref{item:graph_appearance_assumption_X-Y_path_is_immediate_1} in~\cref{prop:SimplifyXYpaths}.
And if~$\hat{T}$ is a nontrivial~$x$--$x$~trail, then~$P$ starts and ends in~$\X$ and it is nontrivial, since~$B$ also satisfies~\labelcref{item:graph_appearance_assumption_X-X_path_is_immediate_2} by assumption.

This transfer of~$x$--$y$~trails from~$\hat{B}$ to~$\X$--$\Y$~paths in~$B$ now interacts with the transfer from~$B$ to~$\hat{B}$ precisely in the desired way:    
\begin{lemma}\label{lem:paths_imply_paths}
    Let~$P_1, \dots, P_k$ be~$\X$--$\Y$~paths in~$B$, and let~$\hat{P}_1, \dots, \hat{P}_k$ be the corresponding \hbox{$x$--$y$~paths} in~$\hat{B}$.
    If there are~$k + 1$ edge-disjoint $x$--$y$~paths $\hat{P}_1', \dots, \hat{P}_{k+1}'$ in~$\hat{B}= (\hat{G}, \hat{\sigma})$ where~$\hat{P}_i'$ starts in the same edge as $\hat{P}_i$ for $i \in [k]$, then there are~$k + 1$ vertex-disjoint $\X$--$\Y$~paths $P_1', \dots, P_{k+1}'$ in~$B$ where~$P_i'$ starts in the same vertex as $P_i$ for $i \in [k]$.
\end{lemma}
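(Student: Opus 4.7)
The plan is to apply, to each of the $k + 1$ given $x$--$y$ paths $\hat{P}_i'$, the transfer from $x$--$y$ trails in $\hat{B}$ to $\X$--$\Y$ paths in $B$ constructed in the paragraph immediately preceding the lemma. Since each $\hat{P}_i'$ is in particular an $x$--$y$ trail, this produces an $\X$--$\Y$ path $P_i'$ in $B$ for every $i \in [k+1]$, leaving two properties to verify: that $P_i'$ and $P_i$ share a startvertex for each $i \in [k]$, and that the family $P_1', \dots, P_{k+1}'$ is pairwise vertex-disjoint.

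For the first property, I would write $v_i$ for the startvertex of $P_i$ and let $\alpha_i$ be the sign with $(v_i, \alpha_i) \in \X$; the explicit formula for $\hat{P}_i$ preceding \cref{lem:separator_implies_separator} shows that its first edge is $(x, v_i^{-\alpha_i})$. Since $\hat{P}_i'$ starts in this edge by hypothesis, \cref{obs:vertex_menger_properties_aux_graph}~\labelcref{itm:prop_trails_in_hat_b_3} forces its second edge to be $(v_i^{-\alpha_i}, v_i^{\alpha_i})$, and the transfer then collapses this two-edge prefix into the single vertex $v_i$, which is therefore the startvertex of $P_i'$.

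For the second property, I would argue by contradiction: suppose some vertex $v \in V(B)$ appeared on both $P_i'$ and $P_j'$ for distinct $i, j \in [k+1]$. Inspecting the transfer, every vertex $v$ of a path $P_\ell'$ -- whether it is a start-, end-, or internal vertex -- arises from a subtrail of $\hat{P}_\ell'$ of the form $v^\alpha (v^\alpha, v^{-\alpha}) v^{-\alpha}$ for some sign $\alpha$; in particular $\hat{P}_\ell'$ then contains the unique edge of $\hat{B}$ between $v^+$ and $v^-$. Hence both $\hat{P}_i'$ and $\hat{P}_j'$ would contain this edge, contradicting edge-disjointness.

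The step I expect to require the most care is treating endpoints on the same footing as internal vertices of $P_\ell'$: a priori one might worry about a vertex in $V(\X) \cup V(\Y)$ lying on two different $P_\ell'$ without forcing a shared edge of $\hat{B}$, especially given that $\X$ may contain both $(v,+)$ and $(v,-)$. This is resolved by the uniform observation that the transfer explicitly passes through a subtrail $v^\alpha (v^\alpha, v^{-\alpha}) v^{-\alpha}$ at \emph{every} vertex of $P_\ell'$, endpoints included, so the edge between $v^+$ and $v^-$ of $\hat{B}$ is used in $\hat{P}_\ell'$ in all cases and the vertex-disjointness argument goes through uniformly.
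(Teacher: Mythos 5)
Your proposal is correct and takes essentially the same route as the paper: transfer each $\hat{P}_i'$ via the trail-to-path construction, read the startvertex of $P_i'$ off the first edge of $\hat{P}_i'$, and derive vertex-disjointness from the fact that every vertex $v$ of a transferred path forces the edge between $v^+$ and $v^-$ to lie on the corresponding $x$--$y$ path, contradicting edge-disjointness. Your explicit treatment of endpoints via the subtrail $v^\alpha (v^\alpha, v^{-\alpha}) v^{-\alpha}$ is precisely the observation the paper relies on through \cref{obs:vertex_menger_properties_aux_graph}~\labelcref{itm:prop_trails_in_hat_b_3}.
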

\begin{proof}
	Let $P_1', \dots, P_{k +1}'$ be the $\X$--$\Y$~paths in $B$ obtained from $\hat{P}_1', \dots, \hat{P}_{k +1}'$.
	Let $i \in [k]$ be arbitrary, and let $v^\alpha$ be the endvertex of the first edge of~$\hat{P}_i'$.
	By construction, the path $P_i'$ starts in the vertex $v$.
	Now our assumption on $\hat{P}_i'$ implies that the path $\hat{P}_i'$ starts in the same edge as $\hat{P}_i$.
	By the construction of $\hat{P}_i$ from~$P_i$, the endvertex $v^\alpha$ of the first edge of $\hat{P}_i$ has the property that $v$ is the startvertex of $P_i$.
	Thus, $P_i$ and $P_i'$ start in the same vertex, as desired.
	
	It remains to prove that the paths $P_1', \dots, P_{k +1}'$ are vertex-disjoint:
	\cref{obs:vertex_menger_properties_aux_graph}~\labelcref{itm:prop_trails_in_hat_b_3} together with our construction yields that if two distinct~$P_i'$ and~$P_j'$ would contain the same vertex~$v$, then both~$\hat{P}_i'$ and~$\hat{P}_j'$ would have to contain the edge~$v^\alpha v^{-\alpha}$, which contradicts that they are edge-disjoint.
\end{proof}

With~\cref{lem:separator_implies_separator} and~\cref{lem:paths_imply_paths}, we have now shown that the construction of~$\hat{B}$ indeed provides a general framework to deduce a vertex-version of Menger's theorem in bidirected graphs form an edge-version.
We can thus finally deduce~\cref{theo:vertex_menger} from~\cref{theo:edge_menger} by checking that~$x$ is edge-clean in~$\hat{B}$ if~$\X$ is clean in~$B$:
    
\begin{proof}[Proof of~\cref{theo:vertex_menger}]
    By~\cref{prop:SimplifyXYpaths}, we may assume that~$B$, $\X$ and~$\Y$ satisfy~\labelcref{item:graph_appearance_assumption_X-Y_path_is_immediate_1,item:graph_appearance_assumption_X-X_path_is_immediate_2}, and this does not affect the cleanness of~$\X$ in~$B$.
    Let~$\hat{B}$, $x$ and~$y$ be constructed from~$B$, $\X$ and~$\Y$ as above, and suppose for a contradiction that~$x$ is not edge-clean, i.e.\,that there exists a nontrivial~$x$--$x$~trail~$\hat{T}$ in~$\hat{B}$.
    As shown above, the trail~$\hat{T}$ then gives rise to a nontrivial path in $B$ that starts and ends in $\X$, which contradicts the cleanness of~$\X$ in~$B$.
    Thus, we can apply \cref{theo:edge_menger} to $\hat{B}$, $x$, $y$ and the~$k$ edge-disjoint~$x$--$y$~paths $\hat{P}_1, \dots, \hat{P}_k$ in~$\hat{B}$ corresponding to the~$k$ vertex-disjoint~$\X$--$\Y$~paths~$P_1, \dots, P_k$ in~$B$.
    Depending on the outcome of~\cref{theo:edge_menger}, \cref{lem:separator_implies_separator} or~\cref{lem:paths_imply_paths} then complete the proof.
\end{proof}

We remark that it is easy to deduce a version of~\cref{theo:vertex_menger} in which an~$\X$--$\Y$~path does not have to be internally disjoint from~$V(\X) \cup V(\Y)$ and may contain trivial $\X$--$\Y$~paths:
Apply~\cref{theo:vertex_menger} to the bidirected graph that is obtained by adding a new vertex for every~$v \in V(\X)$ which is joined to~$v$ by an edge with sign~$-\alpha$ at~$v$ and sign~$\alpha$ at the newly added vertex for any $(v, \alpha) \in \X$, and by adding vertices and edges for $v \in V(\Y)$ in the same way. Then take~$\X'$ and~$\Y'$ as the respective sets of newly added vertices.

\cref{theo:vertex_menger} implies Menger's Theorem for vertex-disjoint paths in both graphs and directed graphs.
As explained at the end of \cref{sec:edges-disjoint}, we may regard any undirected or directed graph as a bidirected graph.
We then obtain the desired version of Menger's Theorem for vertex sets~$X$ and~$Y$ in a directed graph~$D$ by considering the sets~$\X := \set{(x,-) \mid x \in X}$ and~$\Y = \{(y, +) \mid y \in Y\}$ of signed vertices.
Note that~$\X$ is indeed clean since every path in the directed graph~$D$ which ends in~$X$ does so in~$(x, +)$ for some~$x \in X$ and hence not in~$\X$.

\section{Polynomial time algorithm}\label{sec:polynomial_time_algorithm}

In this \namecref{sec:polynomial_time_algorithm} we show that the proof of the edge-version of Menger's Theorem for bidirected graphs, \cref{theo:edge_menger}, can be turned into a polynomial time algorithm.
More precisely, we show the following statement:

\begin{theorem} \label{theo:algorithm_edge_menger}
    There exists a polynomial time algorithm that, given distinct vertices~$x$ and~$y$ of a bidirected graph~$B$ and~$k$ edge-disjoint~$x$--$y$~paths~$P_1, \dots, P_k$ in~$B$ where~$P_i$ starts in~$\ve_i$ for~$i \in [k]$, finds either~$k+1$ edge-disjoint~$x$--$y$~paths~$P_1', \dots, P_{k+1}'$ in~$B$ where~$P_i'$ starts in~$\ve_i$ for~$i \in [k]$ or a set~$S$ of~$k$ edges of~$B$ such that~$B - S$ does not contain any~$x$--$y$~path.
\end{theorem}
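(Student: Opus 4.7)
The plan is to implement the inductive proof of \cref{theo:edge_menger} directly as a recursive algorithm and to verify that a single recursive call can be executed in polynomial time. The recursion in that proof strictly decreases the parameter $\ell := \sum_{i=1}^k |P_i|$, since the auxiliary paths constructed there satisfy $|\hat{P}_k| < |P_k|$ while $|\hat{P}_i| = |P_i|$ for $i < k$. As $\ell \le k\cdot |E(B)|$, the recursion has polynomial depth, and a straightforward induction bounds the size of all intermediate auxiliary graphs polynomially in the input size.

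Within a single recursive call we need subroutines for:
\begin{enumerate}[label=(\roman*)]
    \item deciding, given an edge set $S$, whether $B - S$ contains an $x$--$y$~path and, if so, outputting one;
    \item computing, for a given path $R$ starting at $x$, the appendage $A(R, x)$;
    \item constructing the auxiliary graph $\hat{B}$ with its derived paths $\hat{P}_i$ and auxiliary edges $\hat{e}$, $e_a$, $e(P)$ exactly as in the proof of \cref{theo:edge_menger};
    \item translating the output of the recursive call on $\hat{B}$ back to the required output on $B$, following the case analysis of the proof.
\end{enumerate}
Subroutines (iii) and (iv) consist of bookkeeping operations on the combinatorial data and are clearly polynomial. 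Subroutine (i) amounts to deciding $x$--$y$~path existence in a bidirected graph, which is polynomial via the classical correspondence between bidirected graphs and undirected graphs with perfect matchings: $x$--$y$~paths in $B$ correspond to alternating walks in the associated graph, whose existence is checkable by standard matching-based techniques.

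The principal obstacle is subroutine (ii), for which I propose the following direct procedure. Initialise $A := E(B) \setminus E(R)$, and while there exists an edge $e \in A$ admitting an orientation $\ve$ such that no $\ve$--$x$~trail is contained in $A \cup E(R)$, delete $e$ from $A$; finally return $A$. By maximality of $A(R, x)$ the procedure terminates at exactly $A(R, x)$, and since each iteration removes at least one edge it halts after $O(|E(B)|)$ rounds, each performing $O(|E(B)|)$ trail-reachability queries. These queries are polynomial by the same matching correspondence: an $\ve$--$x$~trail in a bidirected graph corresponds to an alternating trail in the associated graph with a perfect matching, and existence of such a trail is decidable in polynomial time by classical matching algorithms. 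Combining polynomially many recursive calls with polynomial work per call then yields the claimed polynomial time bound.
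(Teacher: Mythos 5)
Your proposal is correct and, at the top level, follows the same strategy as the paper: implement the recursion in the proof of \cref{theo:edge_menger} directly, bound the number of recursive calls by the total length of the input paths, and reduce the work per call to polynomially many path/trail existence queries answered via the correspondence between bidirected graphs and matchings. The genuine difference lies in the appendage subroutine. The paper (\cref{lem:construction_appendage}, \cref{cor:algorithm_appendage}) computes $A(R,x)$ \emph{bottom-up}, starting from $\emptyset$ and repeatedly adjoining either a single edge of $E(R)$ or the edge set of an `ear trail'; this requires a structural lemma guaranteeing that every non-maximal admissible set admits such an augmentation. Your \emph{top-down} greedy deletion -- start from $E(B)\setminus E(R)$ and delete any edge one of whose orientations has no trail to $x$ in the current set together with $E(R)$ -- is simpler and equally correct: no edge of $A(R,x)\setminus E(R)$ is ever deleted, since its witnessing trails lie in $A(R,x)\cup E(R)$, which by induction stays inside the current candidate set united with $E(R)$; and the terminal set is admissible by the stopping rule, hence equals $A(R,x)\setminus E(R)$. (You compute $A(R,x)\setminus E(R)$ rather than $A(R,x)$ itself, as the appendage may contain edges of $R$, but this is harmless because the proof of \cref{theo:edge_menger} only ever uses the union $A(R,x)\cup E(R)$.) Two points in your write-up are looser than they should be, though neither is a real gap: detecting a trail starting in a prescribed oriented edge is not an off-the-shelf matching primitive -- the paper obtains it by passing to the line graph of the bidirected graph, where trails become paths, and only then invoking the Blossom-based path test (\cref{cor:algorithm_trail} via \cref{lem:algorithm_path}); and constructing $\hat{B}$ is not pure bookkeeping, since the edges $e(P)$ require deciding, for each signed vertex, whether a nontrivial $x$--$(z,\alpha)$ \emph{path} with all edges in $A_i$ exists, so your step (iii) also consumes polynomially many path queries.
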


As a consequence, there exists a polynomial time algorithm for \cref{theo:vertex_menger} since the transfer process introduced in \cref{sec:vertex-disjoint} can be computed in polynomial time.

For the proof of \cref{theo:algorithm_edge_menger} note that the proof of~\cref{theo:edge_menger} uses a recursive procedure by induction on the sum of the lengths of the paths~$P_i$; it is thus enough to show that each recursion step runs in polynomial time as the number of recursion steps is bounded by~$|E(B)|$.

To show that each recursion step runs in polynomial time, we will in particular need polynomial time algorithms to find paths and trails between two fixed signed vertices as well as the appendage of a given path.
One key ingredient to this is Edmonds' celebrated Blossom Algorithm:

\begin{theorem}[\cite{edmonds1965paths}] \label{theo:blossom_algo}
    There exists a polynomial time algorithm which, given a graph~$G$, either finds a perfect matching in~$G$ or determines that~$G$ has no perfect matching.
\end{theorem}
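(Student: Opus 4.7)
The plan is to prove the more general statement that a maximum matching of $G$ can be found in polynomial time, and then to declare $G$ to have a perfect matching precisely when the returned matching has size $|V(G)|/2$. The foundation is Berge's theorem: a matching $M$ is of maximum cardinality if and only if $G$ contains no $M$-augmenting path, i.e.\ a path whose endvertices are $M$-unsaturated and whose edges alternate between $E(G)\setminus M$ and $M$. Starting from the empty matching, I would repeatedly find an augmenting path $P$ and replace $M$ by $M \triangle E(P)$; each augmentation increases $|M|$ by one, so at most $|V(G)|/2$ augmentations occur. It therefore suffices to implement a polynomial-time subroutine that either produces an augmenting path or certifies that none exists.

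The core subroutine is Edmonds' alternating-forest search. I would grow a forest $F$ whose roots are the $M$-unsaturated vertices, labelling each vertex as \emph{even} or \emph{odd} according to the parity of the length of its tree-path to its root. Scanning non-tree edges incident to even vertices leads to three cases: an edge from an even vertex to a vertex outside $F$ is used to extend $F$ by attaching the matching edge at that vertex; an edge between two even vertices in different trees concatenates the tree-paths into an augmenting path; and an edge between two even vertices in the same tree exhibits a \emph{blossom}, namely the odd cycle $B$ formed by the two tree-paths to the lowest common ancestor together with the offending edge.

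The main technical obstacle is handling blossoms correctly. Following Edmonds, I would contract each detected blossom $B$ to a single pseudo-vertex, yielding a smaller graph $G/B$ with induced matching $M/B$, and recurse the search there. The crux is the contraction lemma: $G$ admits an $M$-augmenting path if and only if $G/B$ admits an $(M/B)$-augmenting path. The forward direction is a case analysis on how an augmenting path in $G$ enters and leaves the blossom; the reverse direction requires ``lifting'' an augmenting path of $G/B$ back to $G$ by routing through the correct half of the odd cycle, which is always possible because the base of $B$ is an even vertex at which the alternating parity can be continued in either direction along the cycle. Since each contraction strictly decreases the number of vertices, the recursion depth inside one search is at most $|V(G)|$, and with a careful union-find implementation for the contractions each search runs in time polynomial in $|V(G)| + |E(G)|$. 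The outer loop performs at most $|V(G)|/2$ augmentations, so the overall running time is polynomial; after termination we return the resulting maximum matching $M^\ast$ as a perfect matching if $2|M^\ast| = |V(G)|$ and otherwise report that none exists.
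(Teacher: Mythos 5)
Your proposal is a correct outline of Edmonds' blossom algorithm, which is exactly the argument the paper relies on: it does not prove this statement itself but cites it directly from Edmonds' original work, and your reduction via Berge's theorem, the alternating-forest search, and the blossom-contraction lemma is that standard proof. No discrepancy to report.
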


We first show that a path between two signed vertices can be found in polynomial time.

\begin{lemma}\label{lem:algorithm_path}
    There exists a polynomial time algorithm that, given a bidirected graph~$B$ and signed vertices~$(x, \alpha)$ and~$(y, \beta)$ of~$B$ with $x \neq y$, either finds an $(x, \alpha)$--$(y, \beta)$~path in $B$ or determines that no such path exists.
\end{lemma}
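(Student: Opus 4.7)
The plan is to reduce the problem to computing a maximum matching in an auxiliary undirected graph and then apply Edmonds' Blossom Algorithm, i.e.\ \cref{theo:blossom_algo}. The reduction exploits the well-known correspondence between bidirected graphs and undirected graphs equipped with a perfect matching that was mentioned in the introduction.

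First I would construct an auxiliary undirected graph $G$ with a distinguished perfect matching $M$ as follows: for each vertex $v \in V(B)$ introduce two new vertices $v^+$ and $v^-$ joined by an edge $m_v \in M$, and for each edge $e = uv \in E(B)$ add to $G$ the edge $\{u^{\sigma(u,e)}, v^{\sigma(v,e)}\}$. A direct verification then shows that $(x,\alpha)$--$(y,\beta)$ paths in $B$ are in bijection with $M$-alternating paths in $G$ from $x^\alpha$ to $y^\beta$ whose first and last edges do not lie in $M$: the sign-alternation condition $\sigma(v_i, e_i) \neq \sigma(v_i, e_{i+1})$ at an internal vertex $v_i$ of a path in $B$ is precisely what forces the alternating path in $G$ to traverse the matching edge $m_{v_i}$ between its visits to $v_i^+$ and $v_i^-$, and distinct vertices of $B$ correspond to disjoint matching edges used on the alternating path, so the path property is preserved in both directions.

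Second I would delete the vertices $x^{-\alpha}$ and $y^{-\beta}$ from $G$ (together with their incident edges, notably $m_x$ and $m_y$) to obtain a graph $G''$, and set $M' := M \setminus \{m_x, m_y\}$. Then $M'$ is a near-perfect matching of $G''$ leaving only $x^\alpha$ and $y^\beta$ uncovered, and this deletion discards no candidate solution because any $(x,\alpha)$--$(y,\beta)$ path in $B$ meets $x$ and $y$ only once and uses only the sign $\alpha$ at $x$ and the sign $\beta$ at $y$. By the bijection above, the desired path exists in $B$ if and only if $M'$ admits an augmenting path in $G''$, equivalently if and only if $M'$ is not a maximum matching of $G''$. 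I would therefore invoke \cref{theo:blossom_algo} on $G''$ in polynomial time; if the returned maximum matching $N$ satisfies $|N| = |M'|$ we output that no path exists, and otherwise $|N| = |M'|+1$ and the unique augmenting path between $x^\alpha$ and $y^\beta$ can be extracted as the non-cycle component of the symmetric difference $N \triangle M'$ and translated back through the correspondence to the desired path in $B$.

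The main obstacle is to pin down the bijection in the first step carefully: one must verify that the bidirected sign-alternation condition matches $M$-alternation in $G$, that the endpoint signs $\alpha$ and $\beta$ are correctly encoded by requiring the first and last edges of the alternating path to lie outside $M$, and that vertex-distinctness in $B$ is equivalent to vertex-distinctness in $G$ (using that visiting $v^+$ or $v^-$ as an internal vertex forces a traversal of $m_v$). Once this correspondence is established, the remaining steps are routine polynomial-time graph manipulations layered on top of the Blossom Algorithm.
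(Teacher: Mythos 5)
Your proposal is correct and takes essentially the same approach as the paper: both reduce the problem to a matching question in the vertex-split gadget graph (each $v$ becoming $v^+$ and $v^-$ joined by a matching edge) and invoke Edmonds' Blossom Algorithm, the only cosmetic difference being that the paper deletes $x$ and $y$, attaches apex vertices $x'$, $y'$ via the $\hat{B}$ construction and asks for a perfect matching, whereas you leave $x^\alpha$ and $y^\beta$ exposed, delete $x^{-\alpha}$ and $y^{-\beta}$, and test whether the near-perfect matching is maximum. These formulations are equivalent, and your correspondence between sign-alternation in $B$ and $M$-alternation in $G$ is exactly the one the paper uses.
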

\begin{proof}
    We first describe an algorithm which runs in polynomial time and then prove its correctness.
    
    The algorithm starts by checking if there exists an edge in~$B$ with sign~$\alpha$ at~$x$ and sign~$\beta$ at~$y$, which then forms our desired path.
    If no such edge exists, then the algorithm considers the graph~$B' := B - \{x, y\}$ and computes the set~$\X'$ of all signed vertices~$(v, \gamma)$ of~$B'$ for which there exists an $(x, \alpha)$--$(v,\gamma)$~edge and the set~$\Y'$ which is defined analogously with respect to~$y$.
    As in \cref{sec:vertex-disjoint}, the algorithm then constructs the auxiliary graph~$\hat{B}' = (\hat{G}', \hat{\sigma}')$ from~$B'$,~$\X'$ and~$\Y'$; let~$x'$ and~$y'$ be the two vertices of~$\hat{B}'$ representing~$\X'$ and~$\Y'$, respectively.
    Note that the construction of~$\hat{B}'$ can be done in polynomial time.

    The algorithm then uses the polynomial time algorithm of~\cref{theo:blossom_algo} to construct a perfect matching of~$\hat{G}'$ if possible.
    If there is a perfect matching, then the algorithm computes an~$(x, \alpha)$--$(y, \beta)$~path in~$B$ from it in polynomial time (see below).
    Otherwise, the algorithm returns that there is no $(x, \alpha)$--$(y, \beta)$~path in $B$.
    This concludes our description of the polynomial time algorithm.

    For the proof of its correctness, let~$\hat{e}_v$ denote the unique edge of~$\hat{G}'$ between~$v^+$ and~$v^-$ for every vertex~$v \in V(B')$.
    Then~$M := \{\hat{e}_v \mid v \in V(B')\}$ is a matching in~$\hat{G}'$, and every vertex of~$\hat{G}'$ except from~$x'$ and~$y'$ is incident to an edge in~$M$.
	
	If there is a perfect matching~$M'$ in~$\hat{G}'$, then let~$H'$ be the subgraph of~$\hat{G}'$ with edge set~$M \cup M'$.
    By the definition of~$M$, the component of~$H'$ containing~$x'$ is an~$x'$--$y'$ path~$\hat{P}$.
    The construction of~$\hat{B}'$ together with the definition of~$M$ implies that such a path~$\hat{P}$ in~$\hat{G}'$ induces an~$x'$--$y'$~path~$\hat{P}'$ in~$\hat{B}'$.
    As in the proof of~\cref{theo:vertex_menger}, this path~$\hat{P}'$ then induces an~$\X'$--$\Y'$~path~$P'$ in~$B'$.
    The definition of~$\X'$ and~$\Y'$ then allows us to extend the path~$P'$ to an~$(x, \alpha)$--$(y, \beta)$~path in~$B$.
	
	Conversely, we show that if there exists an~$(x, \alpha)$--$(y, \beta)$~path in~$B$, then there is a perfect matching in~$\hat{G}$.
    Let~$P = x \ve_1 v_1 \dots v_{\ell-1} \ve_\ell y$ be an~$(x, \alpha)$--$(y, \beta)$~path in~$B$.
    Then~$P' = v_1 \dots v_{\ell-1}$ is an~$\X'$--$\Y'$ path in~$B'$ by the definition of~$\X'$ and~$\Y'$.
    As in the proof of~\cref{theo:vertex_menger}, the path~$P'$ induces an~$x'$--$y'$~path~$\hat{P}'$ in~$\hat{B}'$.
    The respective path~$\hat{Q}'$ in~$\hat{G}'$ alternates between edges in~$M$ and not in~$M$.
    Note that all the vertices of $\hat{G}'$ but the startvertex and endvertex of $\hat{Q}'$ are incident to edges in $M$.
    This means that no vertex of $\hat{Q}'$ is incident to an edge in $M \setminus E(\hat{Q}')$. 
    It follows that the symmetric difference~$(M \setminus E(\hat{Q}')) \cup (E(\hat{Q}') \setminus M)$ is a perfect matching in~$\hat{G}'$, as desired.
\end{proof}

We next extend~\cref{lem:algorithm_path} from paths to trails.
Our main tool for this is the line graph of a bidirected graph which we introduce below.
For this we need one more definition:
An \emph{orientation of a bidirected graph~$B$} is a map~$\nu: E(B) \to \vecev{E(B)}$ such that~$\nu(e) \in \{ \ve, \ev \}$ for any~$e \in E(B)$.

\begin{definition}
    Given a bidirected graph $B = (G, \sigma)$ and an orientation $\nu$ of $B$, we define the \emph{line graph of~$B$ with respect to $\nu$}, denoted as~$L := L_\nu = (H, \tau)$, as follows.
    The graph~$H$ has vertex set~$E(G)$ and contains an edge~$\{e_1, e_2\} \in E(H)$ with label~$v$ for any two distinct edges~$e_1, e_2 \in E(G)$ and a vertex~$v \in e_1 \cap e_2$ satisfying~$\sigma(v, e_1) = - \sigma(v, e_2)$.
    The signing~$\tau$ of the half-edges~$\halfedges(H)$ assigns to an edge~$\{e_1, e_2\} \in E(H)$ with label~$v$ the sign
    \begin{equation*}
        \tau(e_i, \{ e_1, e_2 \} ) :=
        \begin{cases}
        + & \text{if } v \text{ is endvertex of } \nu(e_i), \\
        - & \text{if } v \text{ is startvertex of } \nu(e_i),
        \end{cases}
    \end{equation*}
    for~$i = 1, 2$.
\end{definition}

Fix an arbitrary orientation of a bidirected graph~$B$, and let~$L$ be the corresponding line graph of~$B$.
Just as for line graphs of (un)directed graphs, the above definition of~$L$ implies that walks in~$B$ induce walks in~$L$ and vice versa.
Indeed, a walk~$v_0 \ve_1 v_1 \dots v_{n-1} \ve_n v_n$ in~$B$ induces the walk~$e_1 \va_1 e_2 \dots e_{n-1} \va_{n-1} e_n$ in~$L$ where~$\va_i$ is the directed edge from~$e_i$ to~$e_{i+1}$ with label~$v_i$ for any~$i \in [n-1]$.
Vice versa, consider a walk~$W = e_1 \va_1 e_2 \dots e_{n-1} \va_{n-1} e_n$ in~$L$ which has label~$v_i$ at the edge~$a_i$ for any~$i \in [n-1]$, and let~$v_0 \in e_1 \setminus \{v_1\}$ and~$v_n \in e_n \setminus \{v_{n-1}\}$.
Writing~$\ve_i = (e_i, v_{i-1}, v_i)$ for~$1 \le i \le n$, the walk~$W$ in~$L$ then induces the walk~$v_0 \ve_1 v_1 \dots \ve_n v_n$ in~$B$.

The above correspondences between walks in~$B$ and walks in~$L$ in particular imply that trails in~$B$ induce paths in~$L$ and paths in~$L$ induce trails in~$B$.
We can thus compute trails in~$B$ in polynomial time by applying \cref{lem:algorithm_path} to a suitable line graph of~$B$.

\begin{corollary} \label{cor:algorithm_trail}
    There exists a polynomial time algorithm that, given a bidirected graph~$B$ and signed vertices~$(x, \alpha)$ and~$(y, \beta)$ of~$B$ with~$x \neq y$, either finds an~$(x, \alpha)$--$(y, \beta)$~trail in~$B$ or determines that no such trail exists.
\end{corollary}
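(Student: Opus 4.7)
The plan is to reduce the trail-finding problem to a path-finding task via the walk correspondence between a bidirected graph and its line graph, and then invoke~\cref{lem:algorithm_path}. First I would fix any orientation $\nu$ of $B$ and construct $L := L_\nu$ in polynomial time. By the walk correspondence discussed just before the corollary, nontrivial $(x,\alpha)$--$(y,\beta)$ trails in $B$ of length $\ell \ge 1$ correspond to bidirected walks in $L$ of length $\ell - 1$ whose vertices (which are edges of $B$) are all distinct, where the first vertex is an edge of $B$ incident with $x$ of sign $\alpha$ at $x$ and the last vertex is an edge incident with $y$ of sign $\beta$ at $y$.

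To collect the many candidate initial and terminal vertices of $L$ into a single pair of signed vertices suitable for~\cref{lem:algorithm_path}, I would augment $L$ to a bidirected graph $L'$ by adding two fresh vertices $x^*$ and $y^*$ together with the following auxiliary edges. For each edge $e$ of $B$ incident with $x$ and satisfying $\sigma(x,e) = \alpha$, add an edge $\{x^*,e\}$ with sign $-$ at $x^*$, and with sign at $e$ equal to $-$ if $x$ is the startvertex of $\nu(e)$ and $+$ otherwise. Add auxiliary edges $\{y^*,f\}$ analogously for each edge $f$ incident with $y$ satisfying $\sigma(y,f) = \beta$. Then run the algorithm of~\cref{lem:algorithm_path} on $L'$ with signed endpoints $(x^*,-)$ and $(y^*,-)$. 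If no path is returned, answer ``no trail exists''; otherwise, translate the found path back via the walk correspondence.

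For correctness, observe that the sign at $e$ chosen above is precisely opposite to the $L$-sign at $e$ of any edge in $L$ whose label is the endpoint of $e$ other than $x$. Consequently, the bidirected walk condition at $e$ in $L'$ forces the $L'$-path, immediately after entering $e$ from $x^*$, to continue along an $L$-edge whose label is distinct from $x$; symmetric reasoning applies at the final vertex incident with $y^*$. Therefore any $(x^*,-)$--$(y^*,-)$ path $x^*\, e_1\, e_2 \cdots e_n\, y^*$ in $L'$ translates under the walk correspondence into a bidirected walk $x\, \ve_1\, v_1\, \ve_2\, v_2 \cdots \ve_n\, y$ in $B$, where $v_i$ is the label of the $L$-edge between $e_i$ and $e_{i+1}$; since the $e_i$ are pairwise distinct vertices of the $L'$-path, this walk is in fact a trail, and it has the required signed endpoints by construction. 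The converse direction is immediate from the same walk correspondence combined with the observation that distinct edges in a $B$-trail become distinct vertices in the associated $L'$-walk.

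The main technical obstacle is calibrating the signs on the auxiliary edges precisely so that the walk condition at $e_1$ and $e_n$ in $L'$ correctly enforces the trail's starting and ending conditions in $B$; once this is arranged, everything else is routine. The algorithm clearly runs in polynomial time, since the construction of $L'$ has $O(|V(B)| + |E(B)|)$ vertices and $O(|E(B)|^2)$ edges, the back-translation of the found path is straightforward, and~\cref{lem:algorithm_path} runs in polynomial time. (Any forbidden parallel edges that might arise in $L'$ due to multiple labels can be eliminated by a standard subdivision as mentioned in~\cref{sec:Preliminaries}, with no effect on the reduction.)
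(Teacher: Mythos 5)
Your proposal is correct and takes essentially the same route as the paper: reduce trail-finding in~$B$ to path-finding in the line graph~$L_\nu$ and invoke \cref{lem:algorithm_path}. The only difference is in the bookkeeping at the endpoints -- the paper chooses~$\nu$ so that every edge at~$x$ starts at~$x$ and every edge at~$y$ ends at~$y$ and then calls the path algorithm once for each candidate pair of first and last edges, whereas you keep~$\nu$ arbitrary and attach apex vertices~$x^*,y^*$ with sign-calibrated auxiliary edges so that a single call suffices; your calibration does correctly force the first and last line-graph edges to carry labels other than~$x$ and~$y$, so the translated walk is an~$(x,\alpha)$--$(y,\beta)$~trail.
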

\begin{proof}
    Choose an orientation~$\nu$ of~$B$ such that the startvertex of~$\nu(e)$ is~$x$ for all~$e \in E(B)$ incident to~$x$ and such that the endvertex of~$\nu(f)$ is~$y$ for all~$f \in E(B)$ incident to~$y$.
    Let~$L$ be the line graph of~$B$ with respect to~$\nu$.
    Then the result follows by applying~\cref{lem:algorithm_path} to every pair~$(e, +)$ and~$(f, -)$ of signed vertices of~$L$ such that in~$B$ the edge~$e$ has sign~$\alpha$ at~$x$ and the edge~$f$ has sign~$\beta$ at~$y$; the choice of~$\nu$ here implies that any such~$(e, +)$--$(f, -)$~path in~$L$ indeed translates to an~$x$--$y$~trail in~$B$ starting in~$x$ with sign~$\alpha$ and ending in~$y$ with sign~$\beta$.
\end{proof}

As our polynomial time algorithm for~\cref{theo:algorithm_edge_menger} follows the proof of~\cref{theo:edge_menger}, we will also need to compute the appendage of a path in polynomial time.
For this, we first prove a lemma which reduces this problem to finding `ear trails' in polynomial time which are defined as follows.

Let~$P$ be a path in a bidirected graph~$B$, and let~$v$ be the startvertex of~$P$.
Let~$A$ be a~$(P, v)$-admissible set.
The set~$\Bon(P, v, A)$ is then defined to contain~$(v, +)$ and~$(v, -)$ as well as all signed vertices~$(w, \alpha)$ of~$B$ for which there exists an oriented edge~$\ve \in \vAv \cup \vE(P)$ that points at $w$ with sign~$-\alpha$.
Note that for each~$(w, \alpha) \in \Bon(P, v, A)$, there is a~$v$--$(w, -\alpha)$~trail in~$A \cup E(P)$.
A nontrivial trail~$T$ in~$B$ is a \emph{$(P, v, A)$-quasi-ear trail} if both the signed start- and endvertex of~$T$ are contained in~$\Bon(P, v, A)$ and the first and last edge of~$T$ are not in~$\vAv \cup \vEv(P)$.
If this latter condition holds additionally for all edges of~$T$, that is~$E(T) \cap (A \cup E(P)) = \emptyset$, then~$T$ is an~\emph{$(P, v, A)$-ear trail}.

We observe that for every~$(P, v, A)$-ear trail~$T$ in~$B$, the set~$A \cup E(T)$ is~$(P, v)$-admissible.
Indeed, the signed endvertex~$(w, \alpha)$ of~$T$ is in~$\Bon(P, v, A)$ and thus there exists a~$v$--$(w, -\alpha)$~trail $T_w$ in~$A \cup E(P)$.
So for every~$\ve \in \vE(T)$ the concatenation of~$\ve T$ and $\inv{T}_w$ is a~$\ve$--$v$~trail in~$A \cup E(P) \cup E(T)$.
Similarly, there exists an~$\ve$--$v$~trail in~$A \cup E(P) \cup E(T)$ for every~$\ve \in \vE(\inv{T})$.

\begin{lemma} \label{lem:construction_appendage}
	Let~$P$ be a path in a bidirected graph~$B$, and let~$v$ be the startvertex of~$P$.
	For every non-maximal~$(P, v)$-admissible set~$A \subsetneq A(P, v)$, there is either an edge~$e \in E({P}) \setminus A$ such that~$A \cup \{e\}$ is~$(P, v)$-admissible or there exists an~$(P, v, A)$-ear trail.
\end{lemma}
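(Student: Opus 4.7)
The plan is to split the argument according to whether $A(P,v) \subseteq A \cup E(P)$. In the easy case, any $e \in A(P,v) \setminus A$ is automatically an edge of $E(P)$, and showing $A \cup \{e\}$ admissible reduces to the observation that the trails from $\ve$ and $\ev$ to $v$ witnessing admissibility of $A(P,v)$ stay automatically in $A \cup E(P) = A \cup \{e\} \cup E(P)$, since by assumption $A(P,v) \cup E(P) \subseteq A \cup E(P)$.

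In the harder case, where $A(P,v) \setminus (A \cup E(P))$ is nonempty, I would pick $e$ in that set together with an $\ve$--$v$ trail $T = u \vec{f}_1 w_1 \vec{f}_2 w_2 \cdots \vec{f}_\ell v$ in $A(P,v) \cup E(P)$, with $\vec{f}_1 = \ve$, choosing the pair $(e, T)$ to minimise the number of edges of $T$ lying outside $A \cup E(P)$. The crucial consequence of minimality is that $f_2 \in A \cup E(P)$: otherwise the pair $(f_2, w_1 \vec{f}_2 w_2 \cdots v)$ would be a valid alternative strictly decreasing this quantity. From $f_2 \in A \cup E(P)$ one deduces $(w_1, \sigma(w_1, e)) \in \Bon(P,v,A)$, because the reverse of $\vec{f}_2$ then lies in $\vAv \cup \vEv(P)$ and, by the trail condition at $w_1$, points at $w_1$ with sign $-\sigma(w_1, e)$.

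Next I would look at the outside edges of $T$. Any maximal run of outside edges at indices $[a,b]$ with $a \geq 3$ (which is all that is possible, since $f_2 \in A \cup E(P)$) gives an ear trail directly: the edges $f_{a-1}$ and $f_{b+1}$ lie in $A \cup E(P)$ and, via the trail condition, put $w_{a-1}$ and $w_b$ into $\Bon(P,v,A)$ with the appropriate signs. If no such run exists, $e = f_1$ is the only outside edge of $T$, and the single-edge trail $u \ve w_1$ is an ear trail whenever $(u, \sigma(u,e)) \in \Bon(P,v,A)$. If instead this membership fails, no edge of $A \cup E(P)$ meets $u$ with sign $-\sigma(u,e)$, and I would switch to any $\ev$--$v$ trail $T^* = w_1 \ev u \vec{g}_2 \cdots v$ in $A(P,v) \cup E(P)$. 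The trail condition at $u$ forces $\sigma(u, g_2) = -\sigma(u,e)$, and the failure just noted forces $g_2 \notin A \cup E(P)$; so the initial outside run of $T^*$ has length at least two, and the subtrail it spans is an ear trail, with startvertex $(w_1, \sigma(w_1,e)) \in \Bon(P,v,A)$ by the earlier deduction and endvertex in $\Bon(P,v,A)$ by the analogous trail-condition argument.

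The step I expect to be the main obstacle is precisely this last subcase, where $u \ve w_1$ fails as an ear trail at $u$. The tempting but doomed approach is to splice $T$ and $T^*$ together through $e$; this runs into genuine problems, since walks in bidirected graphs need not contain trails and repeated edges in a spliced walk cannot in general be shortcut away. The minimality choice defuses this obstruction: it shrinks the outside portion of $T$ down to the single edge $e$, which automatically delivers $(w_1, \sigma(w_1,e)) \in \Bon(P,v,A)$, and this is exactly what allows $T^*$ on its own -- without any splicing -- to supply the required ear trail.
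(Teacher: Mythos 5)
Your case distinction and the minimality argument are sound as far as they go: Case 1 is correct, and in Case 2 the minimality of $(e,T)$ in fact forces \emph{every} $f_j$ with $j\ge 2$ into $A\cup E(P)$ (the pair $(f_j,\vf_j T)$ would otherwise be a strictly better candidate), so the ``runs with $a\ge 3$'' you discuss cannot occur at all. The genuine gap lies in the step you rely on throughout to certify membership in $\Bon(P,v,A)$. You write that the reverse of $\vf_2$ lies in $\vAv\cup\vEv(P)$; but the definition of $\Bon(P,v,A)$ only accepts witnesses from $\vAv\cup\vE(P)$ --- all orientations of edges of $A$, yet only the \emph{forward} orientations of edges of $P$. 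This asymmetry is essential: it is what makes true the accompanying note that every $(w,\alpha)\in\Bon(P,v,A)$ admits a $v$--$(w,-\alpha)$ trail in $A\cup E(P)$, and that note is what makes ear trails useful at all. Hence, if $f_2\in E(P)\setminus A$ and $T$ traverses $f_2$ in the same direction as $P$ does, your proposed witness for $(w_1,\sigma(w_1,e))\in\Bon(P,v,A)$ lies in $\vE(\inv{P})$ and certifies nothing; the membership can genuinely fail. The same defect recurs at every boundary edge of an ``outside run'' ($f_{a-1}$, $f_{b+1}$, and the edge closing the initial run of $T^*$), and it also invalidates the inference that $(u,\sigma(u,e))\notin\Bon(P,v,A)$ forces $g_2\notin A\cup E(P)$: the edge $g_2$ may perfectly well be a $P$-edge whose forward orientation points away from $u$.

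This is not a removable technicality --- it is exactly the case that occupies the final paragraph of the paper's proof. There, under the standing assumption that no edge of $E(P)\setminus A$ can be added, the configuration ``boundary $P$-edges oriented the wrong way relative to $P$'' is shown to be contradictory by locating the first edge of the inside run along $P$ and producing a $v$--$\vf'$ (or $v$--$\fv$) trail inside $A\cup E(P)$ ending in a backward $P$-edge, which would make that edge addable. In your set-up, where no such standing assumption has been made, the bad configuration is not contradictory but instead lands in the first alternative of the lemma: for instance, if $f_2\in E(P)\setminus A$ is traversed forward by $T$, then $\vf_2 T\subseteq A\cup E(P)$ together with the reversed initial segment of $P$ shows that $A\cup\{f_2\}$ is $(P,v)$-admissible. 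So the statement survives, but your argument is missing this case analysis; as written, the trails you exhibit need not have their signed endvertices in $\Bon(P,v,A)$ and so need not be ear trails.
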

\begin{proof}
    Assume that there is no edge~$e \in E(P) \setminus A$ such that~$A \cup \{e\}$ is~$(P,v)$-admissible.
    This implies the following observation which we will use repeatedly throughout this proof:
    Let~$Q$ be a~$v$--$\ve$~trail in~$A(P, v) \cup E(P)$ for some orientation $\ve$ of $e$.
    Then~$\ve \notin \vE(\inv{P}) \setminus \vAv$ as otherwise~$A \cup \{e\}$ is~$(P, v)$-admissible witnessed by~$Q \ve$ and~$P \ev$, a contradiction to the assumption.

    The proof now proceeds in two steps.
    We first find a~$(P, v, A)$-quasi-ear trail in~$B$.
    Secondly, we show that every~$(P, v, A)$-quasi-ear trail contains a~$(P, v, A)$-ear trail as a subtrail.

    Since~$A$ is a non-maximal~$(P, v)$-admissible set, there exists~$e \in A(P, v) \setminus A$.   
    The above observation then implies that no orientation~$\ve$ of~$e$ can be in~$\vE(\inv{P})$, since~$e \in A(P, v)$ implies the existence of a~$v$--$\ve$~trail in~$A(P, v) \cup E(P)$; so we have~$e \in A(P, v) \setminus (A \cup E(P))$.

    Fix an arbitrary orientation~$\ve$ of~$e$.
    Since~$e \in A(P, v)$, there is a~$v$--$\ve$~trail~$T$ in~$A(P, v) \cup E(P)$, and we let~$\vf$ be the first edge on~$T$ not contained in~$\vAv \cup \vE(P)$; note that such an edge~$\vf$ exists since~$\ve$ is a suitable choice.
    The above observation then implies~$\vf \notin \vE(\inv{P})$, and we hence have~$f \in A(P, v) \setminus (A \cup E(P))$.
    Thus, there exists a~$v$--$\fv$~trail~$T'$ in~$A(P,v) \cup E(P)$ and, as above, we let~$\vf'$ be the first edge on~$T'$ not contained in~$\vAv \cup \vE(P)$.
    Again, we have~$\vf' \in A(P, v) \setminus (A \cup E(P))$ by the above observation.
    By this construction,~$\vf' T' \fv$ is a~$(P, v, A)$-quasi-ear trail in~$B$.
    
    Now let~$T$ be a~$(P, v, A)$-quasi-ear trail in~$B$.
    By definition,~$T$ is a~$(P, v, A)$-ear trail if and only if~$E(T) \cap (A \cup E(P)) = \emptyset$.
    So suppose that~$T$ is not itself a~$(P, v, A)$-ear trail, and let~$\vf T \vf'$ be a maximal subtrail of~$T$ in~$A \cup E(P)$. 
    Since the first and the last edge of~$T$ are not in~$\vAv \cup \vEv(P)$, there exist an edge~$\vg$ preceding~$\vf$ on~$T$ and an edge~$\vg'$ succeeding~$\vf'$ on~$T$.
    We now claim that at least one of~$T \vg$ and~$\vg' T$ is again a~$(P, v, A)$-quasi-ear trail.
    This then concludes the proof:
    Since both~$T \vg$ and~$T \vg'$ contain strictly fewer edges in~$A \cup E(P)$, we can recursively apply the above step until we obtain a~$(P, v, A)$--ear trail.

    Both~$\vg$ and~$\vg'$ are not in~$\vAv \cup \vEv(P)$ by construction.
    So by the definition of~$\Bon(P, v, A)$, the claim holds if~$\vf \in \vAv \cup \vE(\inv{P})$ or~$\vf' \in \vAv \cup \vE(P)$.
    Thus, we can assume that $\vf \in \vE(P) \setminus \vAv$ and~$\vf' \in \vE(\inv{P}) \setminus \vAv$ since $f, f' \in A \cup E(P)$. Let $\vh$ be the first edge of $\vEv(\vf T \vf')$ along $P$. If we have~$\vh \in \vE(T)$, then $P \vh T \vf'$ is a trail.
    This contradicts our above observation since $\vf' \in \vE(\inv{P}) \setminus \vAv$. Otherwise, $\vh \in \vE(\inv{T})$ and then $P \vh \inv{T} \fv$ is a trail.
    This contradicts our above observation since $\fv \in \vE(\inv{P}) \setminus \vAv$.
\end{proof}

\begin{corollary} \label{cor:algorithm_appendage}
    There exists a polynomial time algorithm that, given a path~$P$ with startvertex~$v$ in a bidirected graph~$B$, computes its appendage~$A(P, v)$.
\end{corollary}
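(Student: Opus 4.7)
The plan is to compute $A(P, v)$ by successive enlargement, using \cref{lem:construction_appendage} as the main engine. I would start with $A := \emptyset$, which is vacuously $(P, v)$-admissible, and in each iteration attempt to enlarge $A$: as long as $A \subsetneq A(P, v)$, the lemma guarantees that either there exists some $e \in E(P) \setminus A$ with $A \cup \{e\}$ still $(P, v)$-admissible, or there exists a $(P, v, A)$-ear trail. In either case $A$ can be grown by at least one edge, and the iteration halts precisely when $A = A(P, v)$.

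To search for an extension of the first kind, I would iterate over the (polynomially many) candidates $e \in E(P) \setminus A$. Since $A$ is already admissible, it suffices to verify, for each of the two orientations of $e$, that there is a trail from that oriented edge to $v$ inside $A \cup E(P)$. Each such check reduces to a call of \cref{cor:algorithm_trail} after a routine local transformation: delete the prescribed first edge from the host graph, search for a trail from the corresponding signed vertex at its endvertex to $v$ (enumerating the two possible end-signs at $v$), and prepend the edge. If the test succeeds for some $e$, I would replace $A$ by $A \cup \{e\}$ and continue.

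If no such $e$ exists, I would search for a $(P, v, A)$-ear trail. The set $\Bon(P, v, A)$ can be read off directly from the oriented edges in $\vAv \cup \vE(P)$. For each ordered pair of signed vertices in $\Bon(P, v, A)$, I would invoke \cref{cor:algorithm_trail} inside the bidirected subgraph with edge set $E(B) \setminus (A \cup E(P))$; any trail returned automatically has endpoints in $\Bon(P, v, A)$ and avoids $A \cup E(P)$, and is therefore a $(P, v, A)$-ear trail $T$. Replacing $A$ by $A \cup E(T)$ preserves $(P, v)$-admissibility by the observation immediately following the definition of ear trails. If neither of the two searches finds an extension, \cref{lem:construction_appendage} forces $A = A(P, v)$, so I return $A$.

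Each iteration strictly enlarges $A$, bounding the total number of iterations by $|E(B)|$, and each iteration performs polynomially many calls to \cref{cor:algorithm_trail}, so the overall algorithm runs in polynomial time. The main obstacle is bookkeeping rather than conceptual: \cref{cor:algorithm_trail} is stated only for trails between two distinct signed vertices, whereas I sometimes need trails that start with a prescribed oriented edge, end at $v$ with unrestricted sign, or have coinciding endpoints. These are all handled by standard local surgery on the host graph---prepending or appending the prescribed edge, enumerating the two end-signs, or, when both endpoints of the sought ear trail coincide, enumerating the first oriented edge of the trail and reducing to the distinct-endpoint case---none of which affects the polynomial running time.
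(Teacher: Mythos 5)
Your proposal is correct and follows essentially the same route as the paper: iteratively grow an admissible set from $\emptyset$, alternately testing for an extendable edge of $E(P)$ and for a $(P,v,A)$-ear trail via \cref{cor:algorithm_trail}, with \cref{lem:construction_appendage} certifying termination at $A(P,v)$ and the $|E(B)|$ bound on iterations giving polynomial time. Your explicit handling of the bookkeeping issues (prescribed first edges, coinciding endpoints) is a detail the paper leaves implicit, but it does not change the argument.
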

\begin{proof}
    The algorithm starts with the~$(P, v)$-admissible set~$A = \emptyset$ and extends~$A$ recursively to~$(P, v)$-admissible sets until~$A = A(P, v)$.
    Let~$A$ be any~$(P, v)$-admissible set.
    
    The algorithm first checks whether there exists an edge~$e \in E(P) \setminus A$ such that~$A \cup \{e\}$ is again~$(P, v)$-admissible.
    Such an edge~$e$ exists if and only if there exists a~$v$--$\ve~$trail in~$A \cup E(P)$ where~$\ve \in \vE(\inv{P})$.
    This can be checked in polynomial time by~\cref{cor:algorithm_trail}.
    If such an edge~$e$ exists, then the algorithm restarts the recursion with~$A \cup \{e\}$.

    If there is no such edge, then the algorithm looks for a~$(P, v, A)$-ear trail~$T$ in~$B$.
    This can again be done in polynomial time by~\cref{cor:algorithm_trail} since there exists such a~$(P, v, A)$-ear trail if and only if there is a nontrivial trail in~$B - (A \cup E(P))$ that starts and ends in~$\Bon(P, v, A)$.
    If such~$T$ exists, then the algorithm restarts the recursion with~$A \cup E(T)$ which is again~$(P, v)$-admissible by the definition of~$(P, v, A)$-ear trail.

    If there is also no~$(P, v, A)$-ear trail, then~\cref{lem:construction_appendage} implies~$A = A(P, v)$, and the algorithm terminates.
    Since the number of recursion steps is bounded by~$|E(B)|$, this algorithm runs in polynomial time.
\end{proof}

With all these polynomial time algorithms at hand, we are now ready to prove~\cref{theo:algorithm_edge_menger}. 

\begin{proof}[Proof of~\cref{theo:algorithm_edge_menger}]
    Our algorithm follows the construction in the proof of~\cref{theo:edge_menger} which thus yields the correctness of the algorithm.
    Each construction step of the recursive procedure in the proof of~\cref{theo:edge_menger} can be done in polynomial time by~\cref{lem:algorithm_path}, \cref{cor:algorithm_trail} and \cref{cor:algorithm_appendage}.
    The number of recursion steps is at most the sum of the lengths of the paths~$P_i$ and hence bounded by~$|E(B)|$.
    Altogether, this yields a polynomial running time.
\end{proof}

\section*{Acknowledgements}

The second author carried out this work during a Humboldt Research Fellowship at the University
of Hamburg. He thanks the Alexander von Humboldt-Stiftung for financial support.

The fourth author gratefully acknowledges support by doctoral scholarships of the Studienstiftung des deutschen Volkes and the Cusanuswerk -- Bisch\"{o}fliche Studienf\"{o}rderung.

\bibliography{reference}

\end{document}